\newtheorem{theorem}{Theorem}[section]
\newtheorem{lemma}[theorem]{Lemma}
\theoremstyle{definition}
\newtheorem{definition}[theorem]{Definition}
\newtheorem{prop}[theorem]{Proposition}
\newtheorem{cor}[theorem]{Corollary}
\newtheorem{ass}{Assumption}
\newtheorem{problem}[theorem]{Open Problem}
\theoremstyle{remark}
\newtheorem{remark}[theorem]{Remark}
\numberwithin{equation}{section}
\newcommand{\E}{\mathcal{E}}
\newcommand{\dx}{\;\mathrm{d}x}
\newcommand{\dy}{\;\mathrm{d}y}
\newcommand{\dr}{\;\mathrm{d}r}
\newcommand{\dt}{\;\mathrm{d}t}
\newcommand{\ds}{\;\mathrm{d}s}
\newcommand{\dz}{\;\mathrm{d}z}
\newcommand{\HF}{\;\mathit{HYP2F1}} 
\begin{document}

\title[Elastic Flow with Obstacles]{The elastic flow with obstacles:\\ Small Obstacle Results}

\author{Marius Müller}
\address{Albert-Ludwigs-Unversität Freiburg, Mathematisches Institut, 79104 Freiburg im Breisgau}
\email{marius.mueller@math.uni-freiburg.de}
\thanks{The author was supported by the LGFG Grant (Grant no. 1705 LGFG-E) and would like to thank Anna Dall'Acqua, Francesco Nobili and Kensuke Yoshizawa for helpful discussions.}
%

\subjclass[2020]{Primary 35R35, 35G20; Secondary 49J40, 49Q20}



\keywords{Parabolic obstacle problem, Elastic energy, Minimizing movements, Symmetric Rearrangements}

\begin{abstract}
We consider a parabolic obstacle problem for Euler's elastic energy of graphs with fixed ends. We show global existence, well-posedness and subconvergence provided that the obstacle and the initial datum are suitably `small'. For symmetric cone obstacles we can improve the subconvergence to convergence. Qualitative aspects such as energy dissipation, coincidence with the obstacle and time regularity are also examined.
\end{abstract}
\maketitle

\section{Introduction}
Our object of study is the Euler-Bernoulli elastic energy 
\begin{equation}
\mathcal{W}(\gamma) := \int_\gamma \kappa^2 \; \mathrm{d}\mathbf{s},
\end{equation}
where $\gamma: I \rightarrow \mathbb{R}^2$ is a suitably smooth curve, $\kappa$ denotes its curvature and $ \mathrm{d}\mathbf{s} $ denotes the arclength parameter. If $I =(0,1)$ and $ \gamma(x) = (x,u(x))$ for some sufficiently smooth $u: I \rightarrow \mathbb{R}$ the energy becomes 
\begin{equation}
\E(u) := \int_0^1 \frac{u''(x)^2}{(1+ u'(x)^2)^\frac{5}{2}} \dx . 
\end{equation}

Since we deal with \emph{obstacle problems}, our admissible functions are required to lie above a suitable obstacle function $\psi:(0,1) \rightarrow \mathbb{R}$ which we will specify later. The boundary conditions we want to impose are `fixed ends', i.e. $u(0) = u(1) = 0$, so that the admissible set can be chosen as 
\begin{equation}\label{eq:Cpsi}
C_\psi := \{ u \in W^{2,2}(0,1) \cap W_0^{1,2}(0,1) : u \geq \psi  \; a.e. \} .
\end{equation} 
Existence (and nonexistence) of minimizers of $\E$ in $C_\psi$ has been studied in \cite{Anna} and \cite{Marius1}, minimization with slightly different frameworks has also been examined in \cite{Miura1}, \cite{Miura2},  \cite{Miura3} and \cite{Dayrens}. 

The articles  \cite{Anna}, \cite{Marius1} and \cite{Miura3} reveal that under certain smallness conditions on $\psi$ minimizers do exist whereas they do not exist in general if the obstacle is too large.

A useful necessary criterion for minimizers is the \emph{variational inequality}. More precisely -- if $u \in C_\psi$ is a minimizer, then $u$ solves
\begin{equation}\label{eq:vaaaaaarineq}
 D\E(u) (v-u) \geq 0  \quad \forall v \in C_\psi,
\end{equation}  
where $D\E$ denotes the Frechét derivative of $\E: W^{2,2}(0,1) \cap W_0^{1,2}(0,1) \rightarrow \mathbb{R}$. In the following we will also call solutions of \eqref{eq:vaaaaaarineq} \emph{constrained critical points}.


 Once minimizers are found, an object of interest is the \emph{coincidence set} $\Gamma := \{ u = \psi \}$, which forms the so-called \emph{free boundary} of the problem. For higher order variational problems like this one, a description of this free boundary is particularly challenging because of the lack of a maximum principle. 

In this article we do not want to study minimizers but rather approximation of critical points by a certain type of \emph{$L^2$-gradient flow}, called \emph{parabolic obstacle problem} in the literature.

Parabolic obstacle problems are time-dependent evolutions that flow \emph{towards solutions} of the variational inequality. Such evolutions are driven by the so-called \emph{flow variational inequality}, for short $FVI$. In our situation this reads 
\begin{equation}
(\dot{u}(t) , v- u(t))_{L^2} + D\E(u(t)) ( v- u(t)) \geq 0 \quad   \forall v \in C_\psi.
\end{equation}

Parabolic obstacle problems form a large class of time-dependent free boundary problems, sometimes also called \emph{moving boundary problems}. Here the moving boundary is given by $\Gamma_t := \{ u(t) = \psi \}$. 

 In more beneficial frameworks parabolic obstacle problems can also be seen as gradient flows in the metric space $(C_\psi, d_{L^2})$, which immediately implies that evolutions dissipate energy in a direction that is \emph{steepest possible}, cf. \cite{Ambrosio}, \cite{usersguide}.

Many authors have studied moving boundary problems driven by second order operators but recently fourth order problems have also raised a lot of interest, cf. \cite{Novaga1}, \cite{Novaga2}, \cite{Marius2}, \cite{Dayrens2}, \cite{Yoshizawa}. The energies in \cite{Novaga1}, \cite{Novaga2} are (semi-)convex which implies that the evolution can easily be regarded as a metric gradient flow in the sense of \cite{Ambrosio}, \cite{usersguide}. We emphasize that the general framework in \cite{Ambrosio}, \cite{usersguide} really relies on convexity assumptions, which $\mathcal{E}$ does not satisfy.

 In  \cite{Marius2} the lack of convexity is circumvented by looking at the gradient flow in a different flow metric, namely in the metric space $(C_\psi, d_{W^{2,2} \cap W_0^{1,2}})$. We remark that in this metric space, $\mathcal{E}$ is \emph{locally semiconvex}.
Our given energy is neither $L^2$-semiconvex nor do we want to use any other flow metric than the $L^2$-metric. For this we have to pay a price.

 Firstly, we must require that the obstacle is appropriately small to stay in a region where the elastic flow and the biharmonic heat flow show similar behavior. Most of our arguments will work by comparision to the biharmonic heat flow, controlling the nonlinearities with the various smallness requirements. 

Secondly, we are unable to fit the flow into the framework of metric gradient flows. Properties like energy dissipation are thus not immediate consequences and have to be examined seperately. Nevertheless the flow follows now dynamics that are analytically very accessible, which makes the aforementioned comparision to the biharmonic heat flow possible. This is the reason why we study this particular dynamics. 

The techniques used to construct the flow mainly rely on De Giorgi's \emph{minimizing movement scheme}, a `variational time discretization' for the problem. We remark that the evolution was constructed independently in \cite{Yoshizawa}, where the authors use the same scheme but carry out a different approach when passing to the limit. 

After the construction of our flow is finished we examine further properties such as well-posedness, size of the moving boundary, regularity and convergence behavior.  A byproduct of this study is that we show \emph{reflection symmetry} of minimizers of $\mathcal{E}$ in $C_\psi$ for some obstacles $\psi$ using \emph{symmetric decreasing rearrangements} in a setting of nonlinear higher order equations.

\section{Main Results}

In the following we discuss the basic notation and the main results. The scalar product $(\cdot, \cdot)$ will always denote the scalar product on $L^2(0,1)$. The space $W^{2,2}(0,1)\cap W_0^{1,2}(0,1)$ will always be endowed with the norm $||u||_{W^{2,2} \cap W_0^{1,2}} := ||u''||_{L^2}$, cf. \cite[Theorem 2.31]{Sweers}.

\begin{definition}[Elastic energy] 
We define the elastic energy $\mathcal{E}: W^{2,2}(0,1) \cap W_0^{1,2}(0,1)\rightarrow \mathbb{R}$ to be
\begin{equation}
    \mathcal{E}(u) := \int_0^1 \frac{u''(x)^2}{(1+ u'(x)^2)^\frac{5}{2}} \dx. 
\end{equation}
\end{definition}
\begin{remark}
With the choice of 
\begin{equation}\label{eq22}
G(s) := \int_0^s \frac{1}{(1+t^2)^\frac{5}{4}} \; \mathrm{d}t
\end{equation}
the energy becomes 
\begin{equation}
    \mathcal{E}(u) = \int_0^1 [G(u')']^2 \dx .
\end{equation}
The function $G$ is important for many quantities that we consider, hence we will fix $G$ as in \eqref{eq22} for the rest of the article.
\end{remark}

We also require some conditions on the obstacle for the entire article, which we state here. 

\begin{ass}[Assumptions on the obstacle] \label{ref:ass1}
We always assume that $\psi \in C([0,1])$ is such that $\psi(0), \psi(1) < 0 $ and there exists $x_0 \in (0,1)$ such that $\psi(x_0) >0.$ The admissible set $C_\psi$ is then defined as in \eqref{eq:Cpsi}. We define also 
\begin{equation}
I_\psi := \inf_{ u \in C_\psi} \E(u). 
\end{equation}
\end{ass}
We further introduce the constant  
\begin{equation}\label{eq:c0}
c_0 := \int_{\mathbb{R}} \frac{1}{(1+t^2)^\frac{5}{4}} \dt,
\end{equation}
which is important since \cite[Lemma 2.4]{Anna} implies that $I_\psi \leq c_0^2$ for any obstacle $\psi$ satisfying Assumption \ref{ref:ass1}. 
Another crucial observation is that $\psi \leq u_c$ for some $c \in (0,c_0)$, where
\begin{equation}
u_c(x) := \frac{2}{c \sqrt[4]{(1 + G^{-1} (\frac{c}{2}-cx)^2}} - \frac{2}{c \sqrt[4]{1 +G^{-1}(\frac{c}{2})^2}} \quad x \in (0,1),
\end{equation}
\begin{wrapfigure}{r}{6cm}
\vspace{-0.4cm}
\includegraphics[scale=0.11]{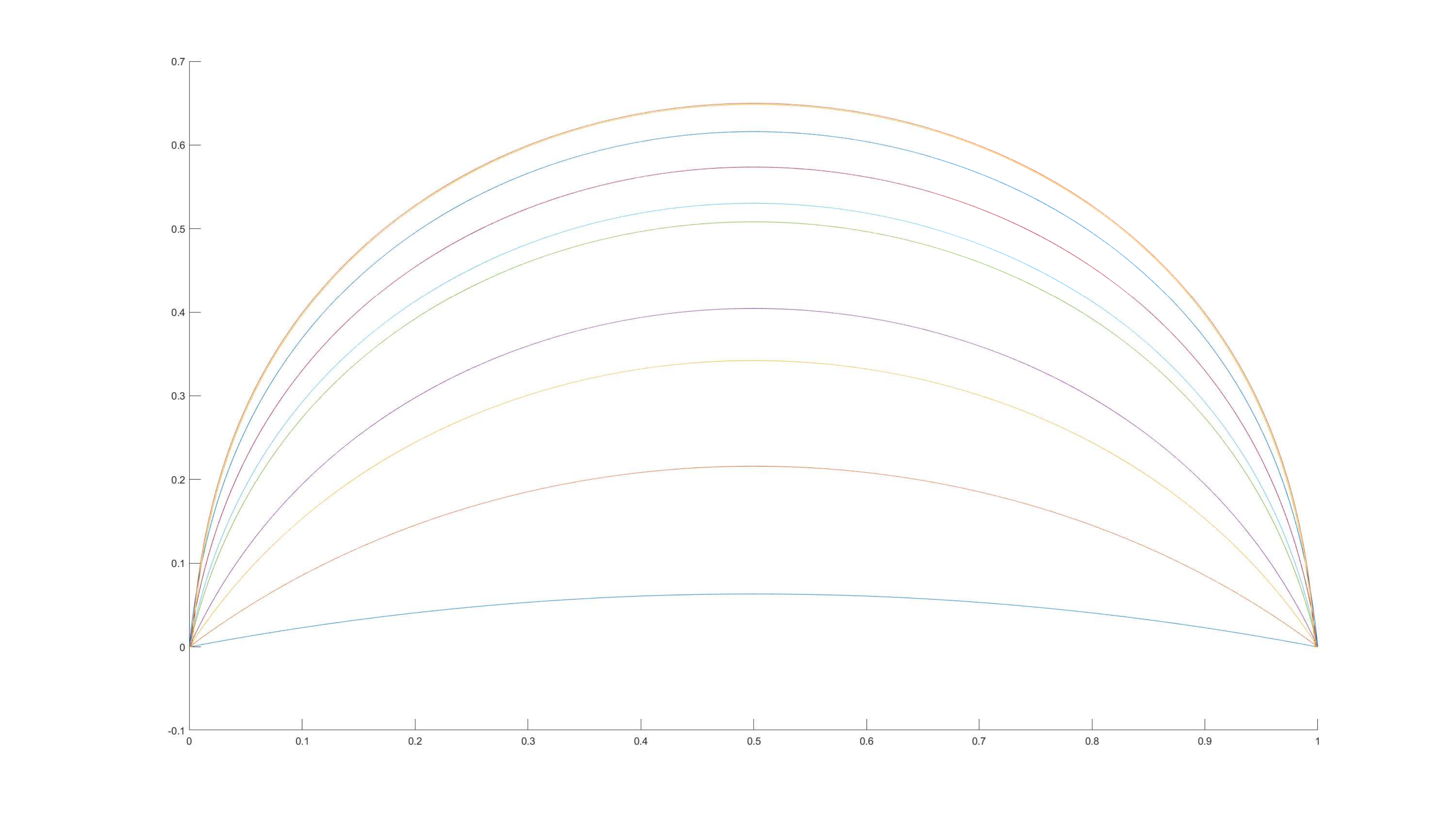}
\caption{$u_c$ for several values of $c$.}
\vspace{-0.6cm}
\end{wrapfigure} 
implies that $I_\psi \leq c^2$, cf. \cite[Lemma 2.3]{Anna}. In particular, $I_\psi$ can become arbitrarily small for small obstacles, cf. Figure 1.

In the following theorems we will always fix an initial value $u_0 \in C_\psi$  that satisfies a certain energy bound. For such an initial value to exist one usually needs a condition on $I_\psi$ which we will not write explicitly. 

\phantom{.}

\newpage

Next we define the flow we intend to construct. For this we introduce the notation $D^{1,2}((0,\infty),V):= \{ u \in W^{1,1}_{loc}((0,\infty), V) : \dot{u} \in L^2((0,\infty),V) \}$ where $\dot{u}$ denotes the weak time derivative of $u$ and $V$ is any Banach space.

\begin{definition}[$FVI$ gradient flow]\label{def:fvigrad} Let $u_0 \in C_\psi$.
We say that a function
$u \in L^\infty((0,\infty), W^{2,2}(0,1) \cap W_0^{1,2}(0,1)) \cap D^{1,2}((0,\infty); L^2(0,1))$ is an \emph{$FVI$-gradient flow} for $\mathcal{E}$ starting at $u_0$ if
\begin{itemize}
\item $u(t) \in C_\psi$ for almost every $t> 0$.
\item $t \mapsto \mathcal{E}(u(t))$ coincides almost everywhere with a nonincreasing function $\phi$ that satisfies $\phi(0) = \E(u_0)$.
\item The unique $C([0, \infty), L^2(0,1))$-representative of $u$ satisfies $u(0) = u_0$. 
\item $u$ satisfies the so-called \emph{$FVI$-inequality}
\begin{equation}\label{eq:endec}
    (\dot{u}(t) , v- u(t) ) + D\mathcal{E}(u(t))(v-u(t)) \geq 0 \quad \forall v \in C \; \; \; \; a.e. \; t > 0.
\end{equation}
\end{itemize}
\end{definition}
\begin{remark}
The existence and uniqueness of the required $C([0, \infty), L^2(0,1))$-representative follows from the Aubin-Lions lemma from which also follows that the solution lies in $C([0, \infty), C^1([0,1])).$ Whenever we address the flow from now on we will always identify it with its $C([0, \infty), C^1([0,1]))$-representative unless explicitly stated otherwise. This means in particular that evaluations at fixed times $t$ are well-defined -- at least in $C^1([0,1])$.
\end{remark}
\begin{remark}
Monotonicity of the energy does - to our knowledge - not follow from the $FVI$-equation \eqref{eq:endec}. As already mentioned it does follow in similar frameworks, cf. \cite[Proposition 2.18]{Marius2}. 
\end{remark}

Next we state our main existence theorem. 

\begin{theorem}[Existence theorem]\label{thm:existence} 
For each $u_0\in C_\psi$  such that $\mathcal{E}(u_0) < \frac{c_0^2}{4}$  there exists a (global) $FVI$ gradient flow $u$ starting at $u_0$.  
Moreover, for each FVI Gradient flow $u$ starting at $u_0$  the $C([0,\infty),C^1([0,1]) )$-representative $t\mapsto u(t)$ is bounded in $W^{2,2}(0,1)$. Furthermore $u(t) \in W^{3,\infty}(0,1)$ for almost every $t> 0 $ and for all such $t$, $u(t)$ satisfies Navier boundary conditions, i.e. $u(t)''(0) = u(t)''(1) = 0$.  
\end{theorem}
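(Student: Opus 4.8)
The plan is to construct the flow via De Giorgi's minimizing movement scheme and then verify the claimed regularity properties one by one. First I would fix a time step $\tau > 0$ and inductively define $u^{k+1}_\tau$ as a minimizer of $v \mapsto \frac{1}{2\tau}\|v - u^k_\tau\|_{L^2}^2 + \E(v)$ over $C_\psi$, starting from $u^0_\tau = u_0$. Existence of each minimizer needs care because $\E$ is not obviously coercive on $C_\psi$; here the energy bound $\E(u_0) < c_0^2/4$ is essential — it keeps the iterates in an energy sublevel strictly below $c_0^2$ where, by the estimates behind $I_\psi \le c_0^2$ and the constructions in \cite{Anna}, one controls $\|u\|_{W^{2,2}}$ in terms of $\E(u)$ (roughly, a bound of the form $\E(u) \ge (1 - C\|u\|_{C^1})\|u''\|_{L^2}^2$ combined with $\|u'\|_\infty$ control) and hence gets weak $W^{2,2}$-compactness of minimizing sequences; lower semicontinuity of $\E$ then finishes the direct method. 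The discrete energy inequality $\E(u^{k+1}_\tau) + \frac{1}{2\tau}\|u^{k+1}_\tau - u^k_\tau\|_{L^2}^2 \le \E(u^k_\tau)$ is immediate from minimality and gives both the uniform energy bound (hence uniform $W^{2,2}$-bound, using the smallness) and the discrete $L^2$-in-time estimate $\sum_k \frac{1}{\tau}\|u^{k+1}_\tau - u^k_\tau\|_{L^2}^2 \le 2\E(u_0)$.

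Next I would define the piecewise-constant and piecewise-affine interpolants $\bar u_\tau$, $u_\tau$ in time, extract (via Aubin--Lions, using the $W^{2,2}$-bound and the $D^{1,2}((0,\infty),L^2)$-bound) a subsequence converging to some $u$ in $C([0,\infty),C^1)$ locally, weakly-$*$ in $L^\infty((0,\infty),W^{2,2})$, and with $\dot u_\tau \rightharpoonup \dot u$ in $L^2((0,\infty),L^2)$. The Euler--Lagrange inequality for each discrete minimizer reads $\frac{1}{\tau}(u^{k+1}_\tau - u^k_\tau, v - u^{k+1}_\tau) + D\E(u^{k+1}_\tau)(v - u^{k+1}_\tau) \ge 0$ for all $v \in C_\psi$; integrating in time and passing to the limit — the delicate point being the passage to the limit in the nonlinear term $D\E(\bar u_\tau)$, which works because $\bar u_\tau \to u$ strongly in $C^1$ so the integrand $[G(u')']^2$-type quantities converge, together with lower semicontinuity to handle the quadratic-in-$u''$ terms in the right direction for the inequality — yields the $FVI$-inequality \eqref{eq:endec}. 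The constraint $u(t) \in C_\psi$ passes to the limit by $C^1$-convergence and closedness of $C_\psi$, and $u(0) = u_0$ is built in. For the energy monotonicity I would pass the discrete energy inequality to the limit to get $\E(u(t)) \le \E(u(s))$ for a.e.\ $s < t$ including $s = 0$, then take $\phi$ to be the monotone a.e.-representative; this is exactly the kind of property that, as the paper notes, does not follow from \eqref{eq:endec} alone and must be extracted from the scheme.

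Finally, for the additional regularity: on any subinterval where the solution does not touch $\psi$, the $FVI$ becomes an equation and the solution is a critical point of $\E$ with free (Navier) boundary data. More robustly, I would argue at the discrete level — each $u^{k+1}_\tau$ solves a variational inequality for the operator $\frac{1}{\tau}(u^{k+1}_\tau - u^k_\tau) + D\E(u^{k+1}_\tau) \ge 0$, and testing with $v = u^{k+1}_\tau \pm \varphi$ for $\varphi \ge 0$ compactly supported in the non-coincidence set gives an equation there; elliptic regularity bootstrapping (the equation is, in divergence form, $-\big(G''\text{-type nonlinearity}\big)'' + \ldots = \text{const in time-step}$, uniformly elliptic thanks to the $C^1$-bound) gives $u^{k+1}_\tau \in W^{3,\infty}$ away from the contact set with estimates, and the natural boundary term in the variational inequality at $x \in \{0,1\}$ — where the obstacle is strictly negative so the constraint is inactive near the endpoints — forces $u''(0) = u''(1) = 0$. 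Since $\dot u(t) \in L^2$ for a.e.\ $t$, for such $t$ the limit $u(t)$ solves $D\E(u(t))(v - u(t)) \ge -(\dot u(t), v - u(t))$, an elliptic variational inequality with $L^2$ right-hand side, and the same bootstrapping gives $u(t) \in W^{3,\infty}$ and the Navier conditions. The main obstacle throughout is the lack of convexity/coercivity of $\E$: every compactness and limit-passage step has to be bought with the smallness assumption $\E(u_0) < c_0^2/4$, which must be shown to be preserved along the scheme and to be enough to make the $C^1$-norm (hence the ellipticity constant and the nonlinear-term control) uniformly bounded; keeping track of exactly which constant needs to be beaten, and verifying that $c_0^2/4$ suffices rather than merely $c_0^2$, is where the real work lies.
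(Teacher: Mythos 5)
Your overall strategy (minimizing movements, discrete Euler--Lagrange inequality, Aubin--Lions, regularity bootstrap on the non-coincidence set) is the same as the paper's, and the discrete/regularity ideas you sketch are essentially Lemmas \ref{lem:discr}--\ref{lem:w3infbds} and Lemmas \ref{lem:weakcon}--\ref{lem:exreg}. However, there is a genuine gap in the step you describe as passing the nonlinear term $D\mathcal{E}(\overline{u}^\tau)(v-\overline{u}^\tau)$ to the limit.

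You claim this "works because $\overline{u}^\tau\to u$ strongly in $C^1$ … together with lower semicontinuity to handle the quadratic-in-$u''$ terms in the right direction." This does not go through. Writing out \eqref{eq:eulernaiv},
\begin{equation}
D\mathcal{E}(w)(v-w)=2\int_0^1\frac{w''(v''-w'')}{(1+w'^2)^{5/2}}\dx-5\int_0^1\frac{w''^2\,w'(v'-w')}{(1+w'^2)^{7/2}}\dx ,
\end{equation}
the term $-2\int w''^2/(1+w'^2)^{5/2}\dx=-\mathcal{E}(w)$ is indeed handled by lower semicontinuity (the inequality goes the right way). But the second integral contains $\overline{u}^{\tau\prime\prime\,2}$ multiplied by the weight $\overline{u}^{\tau\prime}(v'-\overline{u}^{\tau\prime})$, which has \emph{no fixed sign}. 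With only weak $W^{2,2}$-convergence of $\overline{u}^\tau(t)$, the sequence $\overline{u}^{\tau\prime\prime\,2}$ does not converge in any useful sense against a sign-changing weight, and neither semicontinuity inequality applies. Strong $C^1$-convergence controls the denominators and the weight, but not $\overline{u}^{\tau\prime\prime\,2}$.

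The paper's fix is precisely the $W^{3,\infty}$-estimate you defer to the end: Lemma \ref{lem:w3infbds} gives a $\tau$-uniform bound $\|\overline{u}^\tau(t)'''\|_{L^\infty}\le C+D\|\dot u^\tau(t)\|_{L^2}$, which upgrades the compactness (via the discrete Aubin--Lions--Dubinskii lemma, Lemma \ref{lem:discraubinlions}) to strong convergence $\overline{u}^{\tau_n}(t)\to u(t)$ in $C^2([0,1])$ for a.e.\ $t$. With that, $D\mathcal{E}(\overline{u}^{\tau_n}(t))(v-\overline{u}^{\tau_n}(t))\to D\mathcal{E}(u(t))(v-u(t))$ pointwise a.e.\ $t$, and dominated convergence (using \eqref{eq:naivgrad} and the uniform $W^{2,2}$-bound) finishes the limit passage. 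The same strong $C^2$-a.e.\ convergence is also what lets you pass $\mathcal{E}(\overline{u}^{\tau_n}(t))\to\mathcal{E}(u(t))$ a.e.\ and hence identify the monotone representative of $t\mapsto\mathcal{E}(u(t))$ (the paper uses Helly's selection theorem for this). So the structural point you should internalize is that the elliptic $W^{3,\infty}$ bootstrap at the discrete level is not an afterthought for the stated regularity conclusions — it is needed \emph{before} you can pass the $FVI$-inequality to the limit at all. The rest of your outline, including the Lebesgue-point argument giving $W^{3,\infty}$ and Navier conditions for an arbitrary $FVI$ flow (not just the constructed one), matches the paper.
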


The energy threshold of $\frac{c_0^2}{4}$ is necessary for our approach since below this threshold one can obtain a control of the nonlinearities in the Euler-Lagrange equation. The same threshold (and the same control) is used in \cite{Yoshizawa}, where an existence result is obtained independently. 
If $\mathcal{E}(u_0) < \frac{c_0^2}{4}$ one can also show that the $FVI$ gradient flow starting at $u_0$ is unique, cf. \cite[Section 3]{Yoshizawa}. 

We also discuss regularity in time: As we shall see in Section 5  almost every point $t \in (0, \infty)$ is a point of continuity of $u$ in the $W^{2,2}(0,1)$-topology.


Another interesting question is whether the flow touches the obstacle in finite time. This is in particular of interest because in case that the flow does not touch the obstacle, each $FVI$-Gradient flow just coincides with a regular $L^2$ gradient flow. If this were the case, it could have been constructed with much less effort. However for small initial data there holds
\begin{prop}[Coincidence in finite time]\label{prop:touch}
For each $u_0 \in C_\psi$ such that $\mathcal{E}(u_0) < G ( \sqrt{\frac{2}{3}})^2$ there exists a sequence $t_n \rightarrow \infty$ such that $u(t_n)$ touches $\psi$. 
\end{prop}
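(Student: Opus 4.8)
The plan is to argue by contradiction: suppose there exists $T>0$ such that $u(t)$ never touches $\psi$ for $t \geq T$. On the interval $[T,\infty)$ the constraint $u(t) \geq \psi$ is then strictly inactive (at least pointwise in time), so the $FVI$-inequality \eqref{eq:endec} can be tested with $v = u(t) \pm \varphi$ for small $\varphi$, which upgrades it to the equation $\dot u(t) = -\nabla_{L^2}\mathcal{E}(u(t))$; in particular the flow is, from time $T$ on, a genuine $L^2$-gradient flow of $\mathcal{E}$. The key quantitative input is the energy threshold $\mathcal{E}(u_0) < G(\sqrt{2/3})^2$: since $\mathcal{E}$ is nonincreasing along the flow (second bullet of Definition \ref{def:fvigrad}) we have $\mathcal{E}(u(t)) < G(\sqrt{2/3})^2$ for all $t$, and I expect this to force a uniform pointwise bound $\sup_x |u(t)'(x)|$ small enough (via the identity $\mathcal{E}(u) = \int_0^1 [G(u')']^2\,dx$, Cauchy--Schwarz, and $u(0)=u(1)=0$ which gives $\int_0^1 G(u')'\,dx$-type control and hence $\|G(u')\|_\infty \lesssim \mathcal{E}(u)^{1/2}$) to guarantee that $G^{-1}$ is well-defined on the relevant range and that the flow cannot escape to infinity.

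Next I would produce a lower barrier that the obstacle pokes through. Because $\psi$ satisfies Assumption \ref{ref:ass1}, there is $x_0$ with $\psi(x_0) > 0$, while any admissible competitor with very small energy must be uniformly close to the zero function (again via the $G(u')$-bound: $\mathcal{E}(u)$ small $\Rightarrow$ $\|u'\|_\infty$ small $\Rightarrow$ $\|u\|_\infty$ small since $u(0)=0$). More precisely, I would show that for the threshold $G(\sqrt{2/3})^2$ any $u \in C_\psi$ with $\mathcal{E}(u) < G(\sqrt{2/3})^2$ already has $\|u\|_\infty$ bounded by something comparable to $\psi(x_0)$ only if the energy is, in fact, bounded below — this is the heart of the matter. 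The cleanest route: show that the infimum of $\mathcal{E}$ over $\{u \in C_\psi : u(x_0) \geq \psi(x_0),\ u \text{ does not touch } \psi\}$ is not attained and coincides with the infimum over functions that \emph{do} touch, so a strictly-non-touching flow with energy below threshold must have strictly decreasing energy that does not converge — contradicting that $\mathcal{E}(u(t))$ is nonincreasing and bounded below by $I_\psi \geq 0$. Equivalently, if the flow stays off the obstacle forever it subconverges (by the $W^{2,2}$-bound from Theorem \ref{thm:existence} and standard parabolic arguments) to an unconstrained critical point $u_\infty$ of $\mathcal{E}$ in $W^{2,2}\cap W_0^{1,2}$ with $\mathcal{E}(u_\infty) < G(\sqrt{2/3})^2$; but the only such critical point is $u_\infty \equiv 0$ (uniqueness of small-energy critical points of $\mathcal{E}$, which should follow from the same threshold via strict monotonicity/convexity-type estimates on the sublevel set), and $0 \not\geq \psi$ since $\psi(x_0)>0$ — contradiction, since the $C^1$-subconvergence would then force $u(t_{n_k})(x_0) \to 0 < \psi(x_0)$, violating admissibility $u(t) \geq \psi$.

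The main obstacle I anticipate is pinning down the precise constant $G(\sqrt{2/3})^2$: one must verify that this is exactly the threshold below which (i) $\|u'\|_\infty$ is controlled well enough that $\mathcal{E}$ behaves like a perturbation of the Dirichlet-type energy $\int (G(u')')^2$ and (ii) the only critical point in the sublevel set is $0$, so that the subconvergent limit is forced to be $0$. Carrying out (ii) rigorously — essentially a uniqueness statement for the Navier-boundary Euler--Lagrange equation $(G'(u')G(u')')' \cdot(\text{stuff}) = 0$ in the small-energy regime — is the delicate analytic point; I would handle it by a bootstrap from the $G(u')$-bound to a smallness of $\|u\|_{W^{2,2}}$ and then a contraction/uniqueness argument, exactly of the type the paper already invokes for uniqueness of the flow when $\mathcal{E}(u_0) < c_0^2/4$. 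The extraction of the subsequence $t_n \to \infty$ itself is routine once the $W^{2,2}$-bound and energy dissipation are in hand: the dissipation inequality $\int_0^\infty \|\dot u(t)\|_{L^2}^2\,dt < \infty$ gives $t_n$ with $\dot u(t_n) \to 0$ in $L^2$, and compactness in $C^1$ does the rest.
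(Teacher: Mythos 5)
Your proposal takes a genuinely different route from the paper. The paper argues by a \emph{differential inequality}: on any time interval where the flow avoids the obstacle, Proposition \ref{prop:concistncy} turns the FVI into the unconstrained $L^2$-gradient flow, and differentiating $\|u(t)\|_{L^2}^2$ along the flow, using Euler's formula \eqref{eq:euler} and an integration by parts, gives
\begin{equation}
\frac{d}{dt}\|u(t)\|_{L^2}^2 \;\leq\; 2\left(3 - \frac{5}{1+G^{-1}(\sqrt{\E(u_0)})^2}\right)\inf_{C_\psi}\E .
\end{equation}
The threshold $\E(u_0)<G(\sqrt{2/3})^2$ is precisely what makes the bracket negative (it gives $G^{-1}(\sqrt{\E(u_0)})^2<2/3$), and since $\|u(t)\|_{L^2}^2\geq 0$ the flow cannot avoid the obstacle on any interval longer than a fixed $L_0$; the paper thus gets a quantitative touching-frequency estimate, strictly stronger than the statement. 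Your route instead assumes no touching from some time $T$ on, upgrades to the unconstrained gradient flow, extracts a subsequence with $\dot u(t_n)\to 0$, identifies the $W^{2,2}$-limit $u_\infty$ as an unconstrained critical point, and observes $u_\infty\equiv 0$, contradicting $u(t_n)(x_0)\geq\psi(x_0)>0$. This skeleton is correct, and, notably, the step you worry most about needs no smallness at all: any unconstrained critical point with $u(0)=u(1)=0$ satisfies $A_u'/(1+u'^2)^{5/4}\equiv\mathrm{const}$ with $A_u(0)=A_u(1)=0$, forcing the constant, hence $A_u$, hence $u''$ to vanish, so $u\equiv 0$. The threshold $G(\sqrt{2/3})^2$ therefore plays no role in your argument --- it is tailor-made for the paper's decay constant, not for any uniqueness statement --- and if closed your proof would in fact work under the weaker hypothesis $\E(u_0)<\frac{c_0^2}{4}$, at the cost of losing the quantitative conclusion.

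The genuine gap is the subconvergence step. Passing to the limit in $D\E(u(t_n))(\phi)$ requires \emph{strong} $W^{2,2}$-convergence (the quadratic term $\int u''^2u'\phi'/(1+u'^2)^{7/2}\,dx$ does not pass under weak $W^{2,2}$ plus $C^1$ convergence), and you must choose $t_n$ at which the FVI holds, the energy equals its monotone representative, and $\|\dot u(t_n)\|_{L^2}\to 0$; ``compactness in $C^1$ does the rest'' undersells this. These are exactly the issues resolved in Lemma \ref{lem:epsex} via a Cauchy-in-$W^{2,2}$ estimate exploiting monotonicity of the energy, together with a careful selection of good times. If you import that machinery, your alternative proof closes.
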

Next, we are interested in the asymptotic behavior of the flow. For this we first examine closely what candidates for limits are available. 
\begin{definition}[Critical point] 
We say that $u \in C_\psi$ is a \emph{(constrained) critical point in $C_\psi$} if it is a solution of the \emph{variational inequality}
\begin{equation}\label{eq:symm}
 D\E(u)(v-u) \geq 0 \quad \forall v \in C_\psi . 
\end{equation}
\end{definition} 

In our classification we seek to understand only \emph{symmetric} critical points, i.e. critical points $u$ that satisfy $u= u(1- \cdot)$. The reason why those critical points are more important than the others is that under some conditions on the obstacle, the minimizers of $\E$ in $C_\psi$ can be shown to be symmetric.  The condition on the obstacle are precisely that $\psi$ itself is symmetric, ``small'' and radially decreasing i.e. $\psi$ is a decreasing function of $|x-\frac{1}{2}|$. An important special case are \emph{symmetric cone obstacles}, i.e. $\psi$ is symmetric and $\psi \vert_{[0,\frac{1}{2}]}$ is affine linear. The main technique used here is a nonlinear version of \emph{Talenti's inequality}, a classical symmetrization procedure, cf. \cite{Talenti}. Once symmetry is shown one can obtain uniqueness of critical points by the following

\begin{theorem}[Uniqueness and minimality of symmetric critical points]\label{thm:critical}
Let $\psi$ be a symmetric and radially descreasing obstacle and $u_0 \in C_\psi$ such that $\mathcal{E}(u_0) \leq G(2)^2$. Then there exists a symmetric minimizer of $\mathcal{E}$ in $C_\psi$. Moreover, if $\psi$ is a symmetric cone obstacle, this minimizer is the unique symmetric critical point in $C_\psi$.
\end{theorem}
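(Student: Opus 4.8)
The plan is to prove the two assertions of Theorem~\ref{thm:critical} in sequence: first the existence of a symmetric minimizer for general symmetric radially decreasing obstacles, then uniqueness of the symmetric critical point in the cone case.

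\textbf{Step 1: Existence of a symmetric minimizer.} I would start from the known existence of \emph{some} minimizer $u$ of $\mathcal E$ in $C_\psi$ under the smallness condition (here the bound $\mathcal E(u_0)\le G(2)^2$ is used to guarantee $I_\psi$ is small enough that minimizers exist, via \cite{Anna},\cite{Marius1} and the $u_c$-comparison). The key tool advertised in the introduction is a nonlinear Talenti-type inequality: given $u\in C_\psi$, one passes to a symmetric rearrangement $u^\star$. The subtlety is that the natural rearrangement must be compatible with the structure $\mathcal E(u)=\int_0^1 [G(u')']^2\,dx$. I would introduce the auxiliary function $w := G(u')'$ (so that $\mathcal E(u)=\|w\|_{L^2}^2$), view $u$ as the solution of the ODE $G(u')' = w$ with $u(0)=u(1)=0$, and apply a Talenti comparison to this second-order-in-disguise problem: the symmetric decreasing rearrangement $w^\star$ of $|w|$ produces via the same ODE a symmetric function $u^\star$ with $\mathcal E(u^\star)=\|w^\star\|_{L^2}^2 = \|w\|_{L^2}^2 = \mathcal E(u)$ and, crucially, $u^\star \ge u^{\#}$ where $u^\#$ is the symmetric decreasing rearrangement of $u$ — and since $\psi$ is symmetric and radially decreasing, $u\ge\psi$ implies $u^\#\ge\psi^\#=\psi$, hence $u^\star\ge\psi$, so $u^\star\in C_\psi$. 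Thus $u^\star$ is also a minimizer and it is symmetric. The main work here is making the nonlinear Talenti comparison rigorous: establishing that rearranging the right-hand side $w$ of the ODE $G(u')'=w$ increases the solution pointwise (after symmetrization). I expect this to be the principal obstacle of the whole proof — it requires a careful layer-cake / distribution-function argument adapted to the monotone nonlinearity $G$, and likely a separate lemma proven earlier in the paper which I would invoke.

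\textbf{Step 2: Reduction of criticality to a half-interval problem (cone case).} Now suppose $\psi$ is a symmetric cone obstacle and let $u$ be any symmetric constrained critical point. By symmetry and $u\in W^{2,2}$ we have $u'(\tfrac12)=0$. On the left half $[0,\tfrac12]$, the obstacle is affine linear, so on the open set $\{u>\psi\}\cap(0,\tfrac12)$ the Euler--Lagrange equation is the unconstrained one, $(G(u')')'=0$ in the weak sense, i.e. $G(u')'$ is constant there; on $\{u=\psi\}$, $u$ is affine (it equals the linear obstacle), so $u''=0=G(u')'$ there too — hence $G(u')'$ is constant (say $=a$) on each connected component of $(0,\tfrac12)$, and a connectedness/continuity argument across the (closed) contact set shows $G(u')'$ is globally constant on $(0,\tfrac12)$, because the contact set of a cone obstacle with a $C^1$ supersolution must be an interval containing $\tfrac12$ (this uses the strict concavity structure: $u-\psi\ge0$, $u'$ continuous, and the fact that once $u$ leaves the obstacle it is governed by $G(u')'=$const). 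From $G(u')'\equiv a$ on $(0,\tfrac12)$ with boundary data $u(0)=0$, $u'(\tfrac12)=0$, and the constraint $u\ge\psi$, the value of $a$ is uniquely determined: integrating $G(u')'=a$ gives $G(u')(x)=G(u')(\tfrac12)+a(x-\tfrac12)=a(x-\tfrac12)$ (since $G(0)=0$), so $u'(x)=G^{-1}(a(x-\tfrac12))$ and $u(x)=-\int_x^{1/2}G^{-1}(a(s-\tfrac12))\,ds$; the single condition $u(0)=0$ is automatic, so one instead pins $a$ down by the contact requirement — the graph must touch the cone and stay above it, which forces a unique slope $a$. I would make this precise by a monotonicity-in-$a$ argument: the map $a\mapsto$ (signed distance of the computed $u$ from $\psi$ at the contact point / the contact condition) is strictly monotone, giving a unique admissible $a$.

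\textbf{Step 3: Uniqueness and identification with the minimizer.} Steps 1--2 combined show: there is at most one symmetric critical point (its restriction to $[0,\tfrac12]$ is the explicit function determined by the unique slope $a$, extended by reflection), and since the symmetric minimizer from Step~1 is in particular a symmetric critical point (minimizers solve the variational inequality \eqref{eq:symm}), that minimizer \emph{is} this unique symmetric critical point. I would close by remarking that the smallness hypothesis $\mathcal E(u_0)\le G(2)^2$ enters only to ensure $I_\psi$ lies in the regime where Step~1's existence result applies and where $G^{-1}$ is being evaluated within its controlled range (so that the explicit $u$ in Step~2 genuinely lies in $C_\psi$); the uniqueness argument itself is structural. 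The delicate points to be careful about are: (i) justifying that the contact set of a symmetric critical point with a cone obstacle is a single interval (no oscillation between contact and non-contact), which I would handle via the constancy of $G(u')'$ on non-contact components plus a convexity argument on $u-\psi$; and (ii) the nonlinear Talenti inequality of Step~1, which remains the technical heart.
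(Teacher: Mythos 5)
Your Step 1 is essentially the paper's approach (Lemma~\ref{lem:talenti} and Corollary~\ref{cor:minisym}): one writes $f := -G(u')'$ so that $\mathcal{E}(u)=\|f\|_{L^2}^2$, checks $f\ge 0$ by concavity of critical points, and then rearranges $f$ and compares solutions of $-G(u')' = f$ and $-G(v')' = f_*$, using a nonlinear Talenti comparison. You did not mention the crucial ingredient that makes the comparison work, namely the convexity of $\frac{1}{G^{-1}}$ on the relevant range --- and this is precisely why the hypothesis $\mathcal{E}(u_0)\le G(2)^2$ (rather than the weaker $\mathcal E(u_0)<c_0^2/4$) appears: $\frac{1}{G^{-1}}$ fails to be convex past $G(2)$. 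But the outline is sound.

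Step 2 has a genuine and serious error: the Euler--Lagrange equation for $\mathcal{E}$ on the non-contact region is \emph{not} $(G(u')')' = 0$. The energy $\mathcal{E}(u) = \int_0^1 [G(u')']^2\,dx$ involves $u''$, so it is a fourth-order variational problem; you are implicitly varying $v := G(u')$ as if it were a free variable, but perturbing $u\mapsto u+\epsilon\phi$ induces the constrained variation $\delta v = G'(u')\phi'$, not an arbitrary $\delta v$. The correct first-integral of the EL equation (paper's Lemma~\ref{lem:critii}, equation~\eqref{eq:72}, derived from~\eqref{eq:euler}) is
\begin{equation}
\frac{A_u'(x)}{(1+u'(x)^2)^{5/4}} \equiv C, \qquad\text{i.e.}\qquad (G(u')')'(x) = C\,(1+u'(x)^2)^{5/4},
\end{equation}
and the paper moreover shows $C\ne 0$, so your assumption $G(u')'\equiv a$ and the resulting formula $u'(x) = G^{-1}(a(x-\tfrac12))$ are both false. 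The paper instead multiplies the correct equation by $A_u$ and integrates to obtain $u''^2 = 2C(u'-u'(0))(1+u'^2)^{5/2}$, separates variables in $u'$, and then must contend with a non-Lipschitz first-order ODE $u'=J(u)$; uniqueness of the strictly concave solution is then supplied by a dedicated lemma (Lemma~\ref{lem:75}), and the strict monotonicity of the map $A\mapsto H(A)$ pinning down the free parameter is a nontrivial hypergeometric-function computation carried out in Appendix~\ref{app:B}. None of this structure is visible from the linearized equation you use. Separately, your handling of the contact set is also off: for a symmetric $W^{2,2}$ (hence $C^1$) critical point above a cone obstacle, the coincidence set is exactly the singleton $\{\tfrac12\}$, which the paper deduces from concavity plus nonnegativity (if $u(a)=\psi(a)$ with $a\ne\tfrac12$ then the tangent line at $a$ coincides with the cone and concavity forces $u\le\psi$ everywhere, contradicting $u\ge 0$); it is not merely ``an interval containing $\tfrac12$'', and there is no nontrivial interval contact component whose affine structure you could exploit.
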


\begin{figure}[h!]
\includegraphics[scale=0.5]{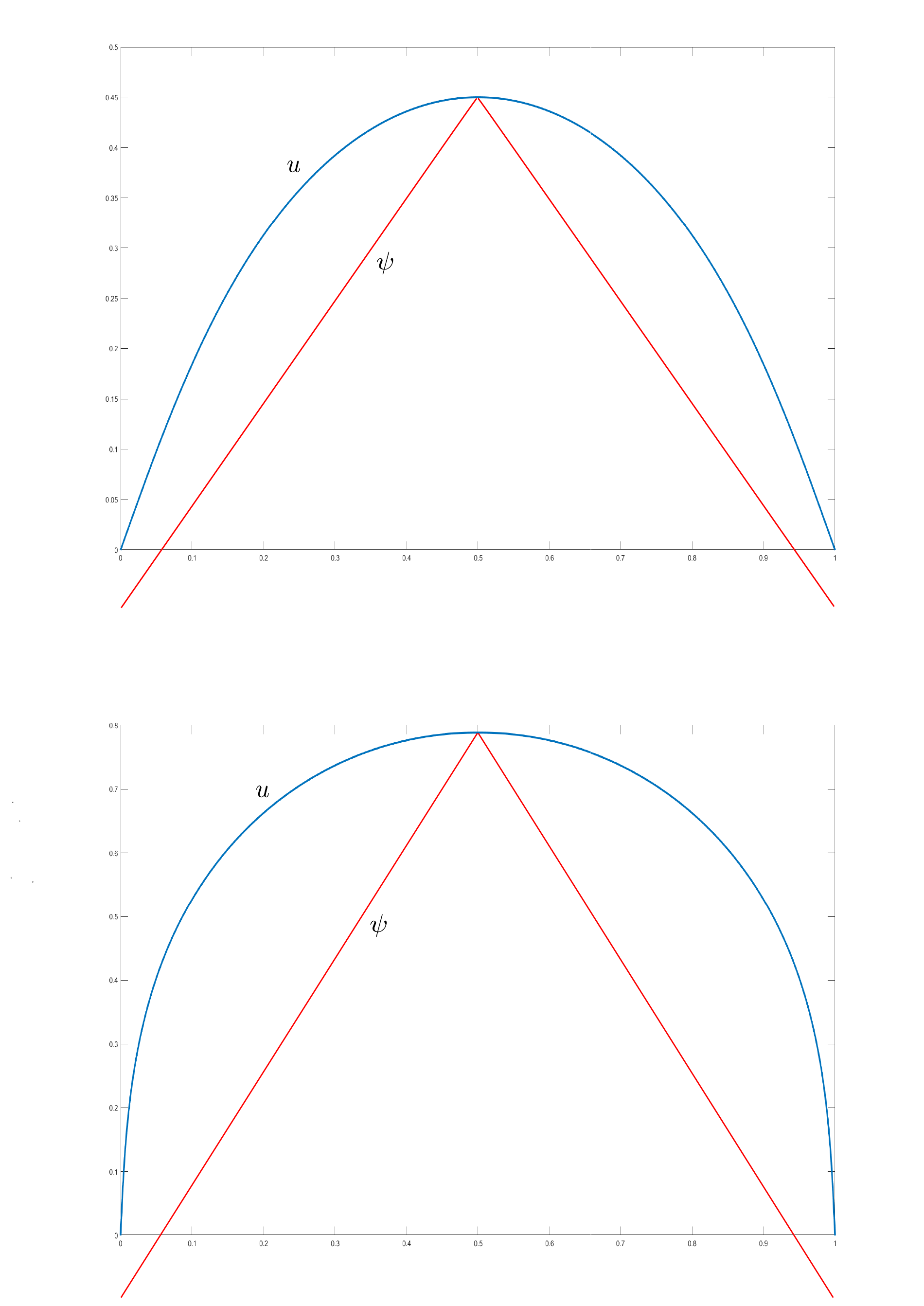}
\caption{A useful byproduct of our approach is that we can find explicit formulas for minimizers if $\psi$ is a small cone obstacle. In the situation of the first plot,  Theorem \ref{thm:critical} yields that $u$ is a minimizer and the only symmetric critical point. In the second plot, $u$ is the only symmetric critical point but the obstacle is too big to conclude with Theorem \ref{thm:critical} that $u$ is a minimizer. We strongly suspect that it is a minimizer anyway.}
\end{figure}

We remark that the symmetric critical points (and their uniqueness) have been studied independently also in \cite{Yoshizawa2} via the shooting method. Here we present a slightly different (but more or less equivalent) approach involving hypergeometric functions.    

 In Section 7 we show a subconvergence result. Here we first specify what we mean by subconvergence.
\begin{definition}[Subconvergence] \label{def:210}
Let $A \subset [0,\infty)$ be an unbounded set, $X$ be a Banach space and $u : A \rightarrow X$. Let $M \subset X$ be a set. We say that $u$ is \textrm{$X$-subconvergent} to points in $M$ if each sequence $(\theta_n) \subset  A$ such that $\theta_n \rightarrow \infty$ possesses a subsequence $\theta_{k_n}$ such that $u(\theta_{k_n})$ converges in $X$ to an element of $M$. If $A = [0, \infty)$ we say $u$ is \emph{fully} $X$-subconvergent to points in $M$. 
\end{definition}
\begin{theorem}[Subconvergence to critical points] \label{thm:convergence}
Let $u_0\in C_\psi$ be such that $\mathcal{E}(u_0) < \frac{c_0^2}{4}$. Let $u$ be an $FVI$ gradient flow starting at $u_0$. Then $u: [0,\infty) \rightarrow C^1([0,1])$ is fully $C^1([0,1])$-subconvergent to points in 
\begin{equation}\label{eq:mcrit}
  M_{crit} := \{ u_\infty \in C_\psi :   D\mathcal{E}(u_\infty) ( v- u_\infty) \geq 0  \;  \forall v \in C_\psi , \quad  \E(u_\infty) \leq \E(u_0) \}. 
\end{equation}
Moreover, for each $\epsilon >0$ there exists a set $B \subset [0,\infty)$ with $|B|< \epsilon$ such that $u_{\mid_{[0,\infty) \setminus B}}: {[0,\infty) \setminus B} \rightarrow W^{2,2}(0,1)$ is $W^{2,2}(0,1)$-subconvergent to points in $M_{crit}$.  
\end{theorem}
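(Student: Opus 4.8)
The plan is to exploit the energy dissipation built into the definition of the $FVI$ gradient flow together with a Łojasiewicz-free, compactness-based argument. First I would establish a quantitative dissipation identity: testing the $FVI$-inequality \eqref{eq:endec} with $v = u(t)$ shifted appropriately and using that $t \mapsto \mathcal{E}(u(t))$ agrees a.e.\ with a nonincreasing $\phi$, I expect to derive that $\int_0^\infty \|\dot u(t)\|_{L^2}^2 \, \mathrm{d}t < \infty$; this is essentially the statement that $\dot u \in L^2((0,\infty),L^2)$, which is already part of the definition, but the point is to tie the $L^2$-norm of $\dot u$ to the \emph{decrement} of $\phi$ so that $\phi$ has a limit $E_\infty$ as $t \to \infty$. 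From $\int_0^\infty \|\dot u\|_{L^2}^2 < \infty$ it follows that there is a sequence $t_n \to \infty$ with $\|\dot u(t_n)\|_{L^2} \to 0$; in fact, for \emph{every} sequence $\theta_n \to \infty$ one can find nearby times $\tilde\theta_n$ (differing by a vanishing amount) along which $\dot u$ tends to zero in $L^2$, and since the flow is bounded in $W^{2,2}$ (Theorem \ref{thm:existence}), $u(\theta_n)$ and $u(\tilde\theta_n)$ have the same $C^1$-subsequential limits.

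The second step is the compactness-and-closedness argument. Since $t \mapsto u(t)$ is bounded in $W^{2,2}(0,1)$ by Theorem \ref{thm:existence} and $W^{2,2}(0,1) \hookrightarrow C^1([0,1])$ compactly, any sequence $u(t_{k_n})$ has a subsequence converging in $C^1([0,1])$ (and weakly in $W^{2,2}$) to some $u_\infty$; by weak lower semicontinuity $u_\infty \geq \psi$, so $u_\infty \in C_\psi$, and $\mathcal{E}(u_\infty) \leq \liminf \mathcal{E}(u(t_{k_n})) = E_\infty \leq \mathcal{E}(u_0)$. It remains to pass to the limit in the $FVI$-inequality along the chosen times: writing \eqref{eq:endec} at $t_{k_n}$ (chosen with $\dot u(t_{k_n}) \to 0$ in $L^2$) and fixing an arbitrary competitor $v \in C_\psi$, the term $(\dot u(t_{k_n}), v - u(t_{k_n}))$ tends to $0$, and $D\mathcal{E}(u(t_{k_n}))(v - u(t_{k_n})) \to D\mathcal{E}(u_\infty)(v - u_\infty)$ because $D\mathcal{E}$ is continuous with respect to the relevant topology (it involves only $u'$ and $u''$, and we have $C^1$-convergence of the former plus weak $W^{2,2}$-convergence of the latter, which suffices since $D\mathcal{E}(w)(\varphi)$ is linear in $w''$ and $\varphi''$). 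Hence $D\mathcal{E}(u_\infty)(v - u_\infty) \geq 0$ for all $v \in C_\psi$, i.e.\ $u_\infty \in M_{crit}$. This gives the full $C^1$-subconvergence claim.

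For the last sentence of the theorem — the $W^{2,2}$-subconvergence off a set of small measure — I would upgrade the convergence from $C^1$ to $W^{2,2}$ along a good set of times. The idea is that weak $W^{2,2}$-convergence plus convergence of the energies $\mathcal{E}(u(t_{k_n})) \to \mathcal{E}(u_\infty)$ would give strong $W^{2,2}$-convergence if $\mathcal{E}$ were (equivalent to) a norm squared, but here $\mathcal{E}$ is only comparable to $\|u''\|_{L^2}^2$ up to the nonlinear weight $(1+u'^2)^{-5/2}$; since $u'$ converges uniformly, this weight converges uniformly to a strictly positive continuous limit, so $\mathcal{E}(u(t_{k_n})) \to \mathcal{E}(u_\infty)$ together with uniform convergence of the weight does in fact force $\|u(t_{k_n})'' \|_{L^2} \to \|u_\infty''\|_{L^2}$, hence strong $L^2$-convergence of the second derivatives. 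The subtlety is that $\mathcal{E}(u(t))$ only equals $\phi(t)$ for \emph{almost every} $t$, so one cannot guarantee $\mathcal{E}(u(t_{k_n})) \to E_\infty$ for every choice of $t_{k_n}$; this is why the exceptional set $B$ with $|B| < \epsilon$ appears. I would fix $\epsilon > 0$, let $B$ be (contained in) the null set where $\mathcal{E}(u(t)) \neq \phi(t)$ together with a small neighbourhood, or rather invoke Egorov/absolute continuity to remove a set of measure $< \epsilon$ outside which $t \mapsto \mathcal{E}(u(t))$ is continuous and coincides with $\phi$, and then repeat the subconvergence argument restricted to $[0,\infty) \setminus B$, where now the energies do converge along any diverging sequence, yielding strong $W^{2,2}$-subconvergence to $M_{crit}$.

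The main obstacle, I expect, is the passage to the limit in the variational inequality combined with the selection of good times: one must carefully arrange that along the chosen sequence $\dot u \to 0$ in $L^2$, that $u$ is bounded in $W^{2,2}$ \emph{at those very times} (which is fine, as the bound is uniform), and that the competitor term behaves well — in particular that $v - u(t_{k_n})$ is bounded in $W^{2,2}$ so that the vanishing of $\dot u$ in $L^2$ actually kills the first term. Everything else is soft functional analysis (compact embedding, weak lower semicontinuity, continuity of $D\mathcal{E}$), but the interplay between the "almost every $t$" qualifier on both the $FVI$-inequality and the energy identity and the requirement of \emph{full} subconvergence (every diverging sequence, not just one) is the delicate bookkeeping that the proof must handle, and is precisely what forces the small exceptional set $B$ in the $W^{2,2}$ statement.
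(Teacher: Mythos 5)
There is a genuine gap in the passage to the limit in the $FVI$-inequality, and the rest of your argument inherits it. You claim that $D\mathcal{E}(u(t_{k_n}))(v-u(t_{k_n})) \to D\mathcal{E}(u_\infty)(v-u_\infty)$ follows from $C^1$-convergence of $u(t_{k_n})$ together with weak $W^{2,2}$-convergence, ``since $D\mathcal{E}(w)(\varphi)$ is linear in $w''$.'' That is false: by \eqref{eq:eulernaiv}, $D\mathcal{E}(u)(\phi)$ contains the term $-5\int_0^1 \frac{u''^2\,u'\,\phi'}{(1+u'^2)^{7/2}}\,\mathrm{d}x$, which is quadratic in $u''$. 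Under weak $W^{2,2}$-convergence alone, $u(t_{k_n})''^2$ need not converge weakly in $L^1$ to $u_\infty''^2$ --- a defect measure can appear --- so the second integral does not pass to the limit. Consequently the first (full $C^1$-subconvergence) part of the theorem is not established by your argument.

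Your proposed fix in the last paragraph is circular: to upgrade weak to strong $W^{2,2}$-convergence via the weighted-norm identification you need $\mathcal{E}(u(t_{k_n})) \to \mathcal{E}(u_\infty)$, but all you have from weak lower semicontinuity is $\mathcal{E}(u_\infty) \le \liminf \mathcal{E}(u(t_{k_n})) = E_\infty$; equality is exactly what strong $W^{2,2}$-convergence would give and is not available a priori. The paper closes this gap in a different order: in Lemma \ref{lem:epsex} it proves strong $W^{2,2}$-convergence \emph{directly} by a Cauchy-sequence argument that estimates $\|u(\theta_{l_n})'' - u(\theta_{l_m})''\|_{L^2}^2$ from above by the energy decrement $\phi(\theta_{l_n})-\phi(\theta_{l_m})$, a term controlled by $\|u(\theta_{l_m})'-u(\theta_{l_n})'\|_{L^\infty}$, and --- crucially --- the term $D\mathcal{E}(u(\theta_{l_m}))(u(\theta_{l_m})-u(\theta_{l_n}))$, which is bounded above using the $FVI$-inequality at time $\theta_{l_m}$ tested against $v = u(\theta_{l_n}) \in C_\psi$, giving $(\dot u(\theta_{l_m}), u(\theta_{l_n})-u(\theta_{l_m}))$, which vanishes because $\|\dot u(\theta_{l_m})\|_{L^2}\to 0$. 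This is the missing mechanism: the variational inequality itself is used to kill the troublesome quadratic-in-$u''$ cross term. Once strong $W^{2,2}$-convergence off a small set is in hand, the paper passes to full $L^2$-subconvergence (Lemma \ref{lem:fullsub}) via the $L^2$-Hölder estimate you also invoke, but supplemented by a diagonal argument that needs compactness of $M_{crit}$ (Lemma \ref{lem:compact}), and only then upgrades to $C^1$ via the uniform $W^{2,2}$-bound and compact embedding. Your heuristic of shifting $\theta_n$ to nearby good times $\tilde\theta_n$ with $|\theta_n-\tilde\theta_n|\to 0$ and $\|\dot u(\tilde\theta_n)\|_{L^2}\to 0$ is sound and would simplify the bookkeeping (and bypass the need for compactness of $M_{crit}$), but it does not repair the nonlinear-term issue, which requires the Cauchy argument above rather than a direct weak/strong pairing.
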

Here we note that smallness requirements on the obstacle are really necessary for such a subconvergence result: For large obstacles $\psi$, it is shown in \cite[Corollary 5.22]{Marius2} that no critical points exist in $C_\psi$. This shows that the energy requirement in Theorem \ref{thm:convergence} may not be omitted.

Subconvergence improves to convergence as soon as there is only one element that is still in the limit candidate set. This is the case in the situation of Theorem \ref{thm:critical}.
The following theorem summarizes many of the findings above. 
  \newpage
\begin{theorem}[Convergence for cone obstacles] \label{thm:Convconve}
Suppose that $\psi$ is a symmetric cone obstacle. Let $u_0\in C_\psi$ be symmetric and $\mathcal{E}(u_0)< \min\{ G(2)^2, \frac{c_0^2}{4}\}$.
Then there exists a unique FVI gradient flow $t \mapsto u(t)$ in $C_\psi$ that converges to the unique symmetric minimizer of $\mathcal{E}$ in $C_\psi$.

\end{theorem}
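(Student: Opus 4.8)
The plan is to assemble this theorem by combining the pieces already in hand. First I would invoke Theorem~\ref{thm:existence}: since $\psi$ is a symmetric cone obstacle it satisfies Assumption~\ref{ref:ass1}, and since $\mathcal{E}(u_0) < \frac{c_0^2}{4}$ there exists a global $FVI$ gradient flow starting at $u_0$; moreover the remark following Definition~\ref{def:fvigrad} together with \cite[Section~3]{Yoshizawa} gives uniqueness of this flow. So the existence-and-uniqueness half of the statement is immediate. The symmetry of $u_0$ should be used to see that the flow stays symmetric: the $FVI$ is invariant under the reflection $x \mapsto 1-x$ (both $C_\psi$ and $\mathcal{E}$ are invariant because $\psi$ is symmetric), so $t \mapsto u(t)(1-\cdot)$ is also an $FVI$ gradient flow starting at $u_0$, and uniqueness forces $u(t) = u(t)(1-\cdot)$ for all $t$. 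Hence the flow takes values in the symmetric subset of $C_\psi$.

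Next I would apply Theorem~\ref{thm:convergence}: the flow is fully $C^1([0,1])$-subconvergent to points of $M_{crit}$, the set of constrained critical points in $C_\psi$ with energy at most $\mathcal{E}(u_0)$. By the symmetry just established, every subsequential limit is itself symmetric, so it lies in the set of symmetric constrained critical points. Now the hypothesis $\mathcal{E}(u_0) < G(2)^2$ lets me invoke Theorem~\ref{thm:critical}: for a symmetric cone obstacle there is a symmetric minimizer $u_\infty$ of $\mathcal{E}$ in $C_\psi$, and it is the \emph{unique} symmetric critical point in $C_\psi$. Combined with $\mathcal{E}(u_\infty) \le I_\psi \le \mathcal{E}(u_0)$, this means $M_{crit}$ intersected with the symmetric functions is exactly the singleton $\{u_\infty\}$.

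Finally I would upgrade subconvergence to genuine convergence. The standard argument: suppose $u(t) \not\to u_\infty$ in $C^1([0,1])$; then there is a sequence $t_n \to \infty$ with $\|u(t_n) - u_\infty\|_{C^1} \ge \delta > 0$ for some $\delta$. But full $C^1$-subconvergence to $M_{crit}$ (whose only symmetric element is $u_\infty$, and all limits are symmetric) produces a subsequence converging in $C^1$ to $u_\infty$, contradicting the lower bound. Hence $u(t) \to u_\infty$ in $C^1([0,1])$, and since $u_\infty$ is the unique symmetric minimizer the theorem follows.

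I expect the main subtlety to be the symmetry-propagation step, specifically verifying carefully that the reflected flow $t \mapsto u(t)(1-\cdot)$ satisfies \emph{all} clauses of Definition~\ref{def:fvigrad} (in particular that the reflected map still has the nonincreasing energy representative and the correct $C([0,\infty),L^2)$-trace), so that the uniqueness statement genuinely applies; the rest is bookkeeping with the quoted theorems. One should also check that the two smallness thresholds are simultaneously usable, but this is built into the hypothesis $\mathcal{E}(u_0) < \min\{G(2)^2, \frac{c_0^2}{4}\}$, so no extra work is needed there.
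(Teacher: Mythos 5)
Your proposal is correct and follows essentially the same route as the paper: uniqueness of the flow plus reflection invariance gives symmetry preservation (the paper packages this as Corollary~\ref{cor:sympres}, which you rederive inline), Theorem~\ref{thm:convergence} gives subconvergence to $M_{crit}$, Theorem~\ref{thm:critical} identifies the unique symmetric candidate, and the upgrade to full convergence is your contradiction argument, which is just the Urysohn-property step the paper uses stated contrapositively.
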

In particular we have shown that small obstacles and small initial data lead to convergent evolutions that respect the obstacle condition. 

\section{Preliminaries}

\subsection{Basic properties of the energy}
Here we discuss basic estimates and properties of $\E$ that will be useful in the following. Most energy estimates will be expressed in terms of the function $G: \mathbb{R} \rightarrow (-\frac{c_0}{2}, \frac{c_0}{2})$, where $G$ and $c_0$ are defined as in \eqref{eq22}, \eqref{eq:c0}. 
\begin{prop}[{A standard estimate for $\E$, cf. \cite[Section 4]{Anna}}]
For each $u \in C_\psi$ one has $\mathcal{E}(u) \geq G(||u'||_\infty)^2$.
\end{prop}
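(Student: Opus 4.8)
The plan is to exploit the primitive structure $\E(u) = \int_0^1 [G(u')']^2\dx$ together with the boundary conditions, turning the pointwise bound into a one-dimensional Cauchy--Schwarz estimate. First I would note that, since $u \in W^{2,2}(0,1) \hookrightarrow C^1([0,1])$ and $u(0) = u(1) = 0$, Rolle's theorem provides a point $x_1 \in (0,1)$ with $u'(x_1) = 0$, hence $G(u'(x_1)) = G(0) = 0$. Next, since $u' \in C([0,1])$, there is a point $x^\ast \in [0,1]$ at which $\abs{u'}$ attains its maximum, i.e. $\abs{u'(x^\ast)} = \|u'\|_\infty$.

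The key computation is that $x \mapsto G(u'(x))$ is absolutely continuous with $(G(u'(x)))' = G'(u'(x))\,u''(x) = \frac{u''(x)}{(1+u'(x)^2)^{5/4}}$, so that the integrand of $\E(u)$ is exactly $[G(u')']^2$. Writing $G(u'(x^\ast))$ as the integral of this derivative from $x_1$ to $x^\ast$ and applying the Cauchy--Schwarz inequality on the interval, whose length is $1$, I would obtain
\begin{equation}
\abs{G(u'(x^\ast))} = \abs{\int_{x_1}^{x^\ast} (G(u'))' \dx} \leq \int_0^1 \abs{(G(u'))'}\dx \leq \left( \int_0^1 [G(u')']^2 \dx \right)^{1/2} = \E(u)^{1/2}.
\end{equation}

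Finally, since $G$ is odd and strictly increasing (its integrand $(1+t^2)^{-5/4}$ is even and positive), one has $\abs{G(u'(x^\ast))} = G(\abs{u'(x^\ast)}) = G(\|u'\|_\infty)$, and combining with the previous display gives $G(\|u'\|_\infty) \leq \E(u)^{1/2}$, which is the claim after squaring. There is no real obstacle here; the only points requiring a word of care are the existence of the interior zero of $u'$ (from the Dirichlet boundary conditions and $C^1$-regularity) and the monotonicity/oddness of $G$, both of which are elementary.
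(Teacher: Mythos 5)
Your proof is correct and follows essentially the same route as the paper's: find a zero of $u'$ via the Dirichlet boundary conditions, find a maximizer of $|u'|$, and apply Cauchy--Schwarz to the integral of $(G(u'))'$ between the two points, using the monotonicity of $G$. The only cosmetic difference is that you integrate the square over all of $[0,1]$ while the paper integrates over the subinterval $[\xi,\eta]$ and then bounds $|\eta-\xi|\leq 1$; you are also a bit more explicit about the oddness of $G$ in handling the sign of $u'$ at the maximizer, which tidies up a small imprecision in the paper's wording.
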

\begin{proof}
If $u \in C_\psi$ then $u(0) = u(1) = 0$. This implies that we can choose $\xi \in (0,1)$ such that $u'(\xi) = 0$. Also, since $u' \in C([0,1])$ we can find $\eta \in(0,1)$ such that $u'(\eta) = ||u'||_\infty$. Note that $|\xi- \eta| \leq 1$. By the Cauchy Schwarz inequality we have
\begin{align}
    \mathcal{E}(u) & = \int_0^1 (G(u')')^2 \dx \geq \left\vert \int_{\xi}^\eta (G(u')')^2 \dx \right\vert  \geq \frac{1}{|\eta - \xi|} \left( \int_{\xi}^\eta G(u')' \dx  \right)^2\\ &  = \frac{1}{|\eta - \xi|} (G(u'(\eta)) - G(u'(\xi)))^2 \geq G(||u'||_\infty)^2. \qedhere
\end{align}
\end{proof}
\begin{remark}\label{ref:remeng}
Note that each $v \in C_\psi$ such that $\mathcal{E}(v) < \frac{c_0^2}{4} $ must then satisfy $||v'||_\infty < G^{-1} \left( \sqrt{ \E(v) } \right)< \infty $. Using this and \cite[Lemma 2.5]{Anna} we find that $\inf_{ u \in C_\psi} \E(u) < \frac{c_0^2}{4}$ implies existence of a global minimizer of $\E$ in $C_\psi$. It is also worth noting that each $v \in C_\psi$ with $\E(v) < \frac{c_0^2}{4}$ satisfies 
\begin{equation}\label{eq:standardW22}
||v||_{W^{2,2}\cap W_0^{1,2}}^2 \leq \E(v) ( 1+ G^{-1}(\sqrt{\E(v)})^2)^\frac{5}{2} .
\end{equation}
These observations are the crucial reason for the energy bounds in the statement of Theorem \ref{thm:existence} and Theorem \ref{thm:convergence}.
\end{remark}
In the following proposition we discuss first properties of the Frechét derivative $D\E$. Most of those computations have already been made in \cite{Anna},   \cite{Marius1}.
\begin{prop}[Explicit formulas for the derivative]\label{prop:33}
For each $u,\phi \in W^{2,2}(0,1) \cap W_0^{1,2}(0,1) $ one has 
\begin{equation}\label{eq:eulernaiv}
    D\mathcal{E}(u)(\phi) = 2 \int_0^1 \frac{u'' \phi''}{(1+ u'^2)^\frac{5}{2}} \dx - 5 \int_0^1 \frac{u''^2 u' \phi'}{(1+ u'^2)^\frac{7}{2}} \dx. 
\end{equation}
If additionally $u \in W^{3,1}(0,1)$ and $u''(0) = u''(1) = 0$ then 
\begin{equation}\label{eq:euler}
D\mathcal{E}(u)(\phi) = - 2 \int_0^1 \frac{A_u'\phi'}{(1+(u')^2)^\frac{5}{4}} \dx,
\end{equation}
where 
\begin{equation}
A_u := \frac{u''}{(1+u'^2)^\frac{5}{4}}.
\end{equation}
\end{prop}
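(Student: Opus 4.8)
The plan is to verify both formulas by straightforward computation, the first by differentiating under the integral sign and the second by integration by parts.

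\medskip

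\textbf{Step 1: The naive formula \eqref{eq:eulernaiv}.} First I would fix $u, \phi \in W^{2,2}(0,1) \cap W_0^{1,2}(0,1)$ and consider the map $t \mapsto \mathcal{E}(u + t\phi)$ for $t$ near $0$. The integrand is $f(t,x) = \frac{(u''+t\phi'')^2}{(1+(u'+t\phi')^2)^{5/2}}$. Since $u', \phi' \in C([0,1])$ by Sobolev embedding and $u'', \phi'' \in L^2$, one checks that $f$ and $\partial_t f$ are dominated uniformly for $t$ in a neighborhood of $0$ by an $L^1$ function of $x$ (the denominator is bounded below by $1$, and the numerators are controlled by $(|u''| + |\phi''|)^2 \in L^1$ together with the bounded factors coming from $u', \phi'$). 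Hence one may differentiate under the integral sign; evaluating $\partial_t f$ at $t = 0$ via the quotient rule gives exactly the two terms in \eqref{eq:eulernaiv}, and this limit is linear and bounded in $\phi$, which identifies it with the Fréchet derivative. This step is routine; the only mild care needed is the domination argument, and I expect no real obstacle here.

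\medskip

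\textbf{Step 2: The integrated-by-parts formula \eqref{eq:euler}.} Now assume in addition $u \in W^{3,1}(0,1)$ with $u''(0) = u''(1) = 0$. The goal is to rewrite the right-hand side of \eqref{eq:eulernaiv} so that $\phi''$ no longer appears. I would integrate the first term of \eqref{eq:eulernaiv} by parts: $\int_0^1 \frac{u''}{(1+u'^2)^{5/2}} \phi'' \dx = \left[ \frac{u''}{(1+u'^2)^{5/2}} \phi' \right]_0^1 - \int_0^1 \left( \frac{u''}{(1+u'^2)^{5/2}} \right)' \phi' \dx$. The boundary term vanishes because $u''(0) = u''(1) = 0$ (note $\phi'$ need not vanish at the endpoints, so this is where the Navier condition on $u$ is used). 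For the derivative of the bracket, the key algebraic observation is that $\frac{u''}{(1+u'^2)^{5/2}} = \frac{A_u}{(1+u'^2)^{5/4}}$ with $A_u = \frac{u''}{(1+u'^2)^{5/4}}$, so $\left( \frac{u''}{(1+u'^2)^{5/2}} \right)' = \frac{A_u'}{(1+u'^2)^{5/4}} + A_u \cdot \left( (1+u'^2)^{-5/4} \right)' = \frac{A_u'}{(1+u'^2)^{5/4}} - \frac{5}{2} \frac{A_u u' u''}{(1+u'^2)^{9/4}}$. Multiplying out and using $A_u (1+u'^2)^{-9/4} u'' = \frac{(u'')^2}{(1+u'^2)^{7/2}}$, the second piece produces $+\frac{5}{2}\int_0^1 \frac{(u'')^2 u' \phi'}{(1+u'^2)^{7/2}}\dx$, which combines with the second term of \eqref{eq:eulernaiv} — wait, let me be careful with signs: the $-\int (\cdots)' \phi'$ contributes $-\int \frac{A_u'}{(1+u'^2)^{5/4}}\phi'\dx + \frac52 \int \frac{(u'')^2 u' \phi'}{(1+u'^2)^{7/2}}\dx$, so after multiplying \eqref{eq:eulernaiv} through by $2$ we get $2 \cdot \left( -\int \frac{A_u'}{(1+u'^2)^{5/4}}\phi'\dx + \frac52\int\frac{(u'')^2u'\phi'}{(1+u'^2)^{7/2}}\dx \right) - 5\int \frac{(u'')^2 u'\phi'}{(1+u'^2)^{7/2}}\dx = -2\int_0^1 \frac{A_u'\phi'}{(1+u'^2)^{5/4}}\dx$, since the two lower-order terms cancel exactly. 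This yields \eqref{eq:euler}.

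\medskip

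\textbf{Anticipated obstacle.} The main thing to be careful about is regularity/justification of the integration by parts: one needs $\frac{u''}{(1+u'^2)^{5/2}} \in W^{1,1}(0,1)$ so that it has an absolutely continuous representative with well-defined boundary values and the product rule is valid. This follows from $u \in W^{3,1}$ (giving $u'' \in W^{1,1}$, hence $u'' \in C([0,1])$ so the boundary conditions make pointwise sense) together with $u' \in C^1([0,1]) \cap W^{2,1}$ bounded, so that $(1+u'^2)^{-5/2}$ is Lipschitz with an $L^1$ derivative; the product of a $W^{1,1}$ function and a Lipschitz function with $W^{1,1}$ regularity is again $W^{1,1}$, and $A_u, A_u'$ are then genuine $L^1$ objects. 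I would also note that $A_u \in W^{1,1}$ by the same reasoning so that $A_u'$ in \eqref{eq:euler} is meaningful. Beyond this bookkeeping the computation is purely algebraic and the cancellation of the lower-order terms is the essential point.
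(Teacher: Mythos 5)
Your proposal is correct and follows essentially the same route as the paper: one integration by parts on the $\phi''$-term of \eqref{eq:eulernaiv}, with the boundary term killed by the Navier conditions on $u$, followed by the algebraic regrouping in terms of $A_u'$. The only cosmetic differences are that the paper simply cites \cite[Equation 1.5]{Anna} for \eqref{eq:eulernaiv} rather than rederiving it by differentiation under the integral sign, and that the paper passes through an intermediate expression involving $u'''$ explicitly before invoking the product rule, whereas you organize the same computation directly around $A_u = u''(1+u'^2)^{-5/4}$; your added remarks on the $W^{1,1}$ regularity needed to justify the integration by parts are correct and a welcome bit of extra care.
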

\begin{proof}
Equation \eqref{eq:eulernaiv} can be found in \cite[Equation 1.5]{Anna}. If $u$ is now as in the second part of the claim we can perform an integration by parts in the first summand to get 
\begin{align}
D\E(u)(\phi) & = \left[ \frac{u''\phi'}{(1+ u'^2)^\frac{5}{2}} \right]_0^1 - 2 \int_0^1 \frac{u''' \phi'}{(1+ u'^2)^\frac{5}{2}} \dx + 5  \int_0^1 \frac{u''^2 u' \phi'}{(1+ u'^2)^\frac{7}{2}} \dx
\end{align}
Since the boundary terms vanish by assumption we only end up with the last two integrals, whereupon \eqref{eq:euler} can easily be verified using the product rule. 
\end{proof}
\begin{remark}
The astoundingly compact formula \eqref{eq:euler} was already known to Euler, see \cite{Euler}, and will be of great use for us. The notation $A_u := \frac{u''}{(1+ u'^2)^\frac{5}{4}}$ will also be used throughout the article. 
\end{remark}
\begin{remark}\label{rem:opnormemb}
Another useful consequence of \eqref{eq:eulernaiv} is that for all $u, \phi \in W^{2,2}(0,1) \cap W_0^{1,2}(0,1)$ one has 
\begin{equation}\label{eq:naivgrad}
|D\E(u)(\phi)| \leq  ( 2 ||u||_{W^{2,2}} + 5 ||u||_{W^{2,2}}^2 ) ||\phi||_{W^{2,2}}. 
\end{equation}
Here we have used that $||\phi'||_{L^\infty} \leq || \phi''||_{L^2}$ for all $\phi \in W^{2,2} \cap W_0^{1,2}$. 
\end{remark}

\subsection{Basic properties of FVI gradient flows}
In this section we will briefly discuss why the $FVI$ gradient flow generalizes the concept of an $L^2$- gradient flow. Furthermore we will discuss some basic regularity properties that follow immediately by the definition. Recall that for a Hilbert space $H$ that is dense in $L^2$, a Frechét differentiable functional $\mathcal{F}: H \rightarrow \mathbb{R}$ is said to have an \emph{$L^2$-gradient} at $u \in H$ if $D\mathcal{F}(u)  \in H^*$ possesses an extension to a linear continuous functional in $(L^2)^*$. We denote by $\nabla_{L^2} \mathcal{F}(u) \in L^2$ the representing element of this functional in $L^2$.   
\begin{prop}[Consistency with $L^2$-gradient flows]\label{prop:concistncy}
Let $u$ be an $FVI$ gradient flow for $\mathcal{E}$ in $C_\psi$. Let $t > 0 $ be a point where $\{ u(t) = \psi \} = \emptyset$ and the $FVI$ (i.e. \eqref{eq:endec}) holds. Then $\E$ posesses an $L^2$-gradient at $u(t)$ and one has 
\begin{equation}
\dot{u}(t) = - \nabla_{L^2} \E(u(t)). 
\end{equation}
\end{prop}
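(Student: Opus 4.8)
The plan is to unravel the $FVI$-inequality \eqref{eq:endec} at the good time $t$ using that the coincidence set is empty there. First I would observe that, since $u(t) \in C_\psi$ with $u(t) > \psi$ everywhere and $u(t) - \psi$ is continuous and vanishes at the endpoints (where $\psi < 0$), the open set $\{u(t) > \psi\}$ is all of $(0,1)$ and every $\phi \in W^{2,2}(0,1)\cap W_0^{1,2}(0,1)$ gives an admissible variation in both directions: for $|\varepsilon|$ small enough, $v := u(t) \pm \varepsilon \phi$ still lies above $\psi$ (because $\min_{[0,1]}(u(t)-\psi) > 0$ while $\|\varepsilon\phi\|_\infty \to 0$), hence $v \in C_\psi$. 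Plugging $v = u(t) + \varepsilon\phi$ and $v = u(t) - \varepsilon\phi$ into \eqref{eq:endec}, dividing by $\varepsilon > 0$ and by $-\varepsilon < 0$ respectively, yields
\begin{equation}
(\dot u(t), \phi) + D\mathcal{E}(u(t))(\phi) = 0 \quad \forall \phi \in W^{2,2}(0,1)\cap W_0^{1,2}(0,1).
\end{equation}

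Next I would identify this with the statement that $\mathcal{E}$ has an $L^2$-gradient at $u(t)$. From the previous sentence, $\phi \mapsto D\mathcal{E}(u(t))(\phi) = -(\dot u(t), \phi)$ is bounded on $W^{2,2}\cap W_0^{1,2}$ by $\|\dot u(t)\|_{L^2}\|\phi\|_{L^2} \le \|\dot u(t)\|_{L^2}\|\phi\|_{W^{2,2}}$ and, more to the point, it extends to a continuous linear functional on $L^2(0,1)$ — namely $\phi \mapsto -(\dot u(t),\phi)$ — because $W^{2,2}\cap W_0^{1,2}$ is dense in $L^2(0,1)$. (Note $\dot u(t) \in L^2(0,1)$ for a.e. $t$ by the $D^{1,2}((0,\infty);L^2)$-membership in Definition \ref{def:fvigrad}, so one should restrict to such $t$, which is harmless since the hypotheses already say ``where the $FVI$ holds''.) Thus $\nabla_{L^2}\mathcal{E}(u(t))$ exists and its representing element is $-\dot u(t)$, i.e. $\dot u(t) = -\nabla_{L^2}\mathcal{E}(u(t))$.

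The only genuinely delicate point is the admissibility of the two-sided variation, i.e. checking $u(t) \pm \varepsilon\phi \ge \psi$ for small $\varepsilon$. This needs $u(t) - \psi$ to be bounded away from $0$, which requires a positive \emph{minimum} rather than just a pointwise positive infimum; this is why one uses that $u(t) \in C^1([0,1])$ (so $u(t)-\psi \in C([0,1])$ attains its minimum on the compact interval) together with the hypothesis $\{u(t) = \psi\} = \emptyset$, which forces that minimum to be strictly positive. I expect this to be the main obstacle only in the sense that it is the one place where the compactness of $[0,1]$ and the $C^1$-representative (guaranteed by the remark after Definition \ref{def:fvigrad}) are essential; the rest is the routine two-sided testing argument. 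Everything else — the cancellation of the $D\mathcal{E}$ terms, linearity in $\phi$, and the density argument — is immediate.
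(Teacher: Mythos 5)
Your proposal is correct and follows essentially the same route as the paper: exploit the empty coincidence set together with continuity on the compact interval to get a uniform gap $\delta$, test the $FVI$ with two-sided perturbations $u(t)\pm\varepsilon\phi$ to upgrade the inequality to an equality, and then use density of $W^{2,2}\cap W_0^{1,2}$ in $L^2$ to identify the $L^2$-gradient. Your extra remark about restricting to a.e.\ $t$ with $\dot u(t)\in L^2$ is a fair point of care, but it is already subsumed by the hypothesis that the $FVI$ holds at $t$.
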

\begin{proof}
If $\{ u(t) = \psi \} = \emptyset$ then $u(t) > \psi$ on $[0,1]$. As $u(t), \psi  \in C([0,1])$ there exists $\delta> 0 $ such that $u(t) - \psi > \delta$ on $[0,1]$. Now let $\phi \in W^{2,2} \cap W_0^{1,2}$. Then by Sobolev embedding $\phi \in C([0,1])$. By the choice of $\delta$ one has $v := u(t) + \epsilon \phi \in C_\psi$ for all $\epsilon \in \mathbb{R}  : |\epsilon| < \frac{1}{||\phi||_\infty}\delta$. Testing \eqref{eq:endec} with the function $v$ we have just chosen we find that 
\begin{equation}
\epsilon \left( ( \dot{u}(t) , \phi) + D\E(u(t))( \phi) \right)  \geq 0 .
\end{equation}
Looking first at a positive and then at a negative value of $\epsilon$ and dividing both times by $\epsilon$ we find 
\begin{equation}
 D\E(u(t))( \phi) = ( -\dot{u}(t),\phi) \quad \forall \phi \in W^{2,2}(0,1) \cap W_0^{1,2}(0,1)  .
\end{equation}
Since $W^{2,2} \cap W_0^{1,2}$ is dense in $L^2$ we find that $D\E(u(t))$ can be extended to an element of $L^2(0,1)^*$, represented by $-\dot{u}$. By the very definition of $L^2$-gradient follows that 
$
-\dot{u}(t) =  \nabla_{L^2} \E(u(t))
$
and hence the claim. 
\end{proof}
The $FVI$ gradient flow starts with significantly less regularity in time than the one in \cite{Marius2}. However we can extract some immediate regularity properties: Here we expose a basic feature that will be very important for our analysis: Uniform  $C^{0, \frac{1}{2}} ((0, \infty); L^2(0,1))$-estimates.

\begin{prop}[Uniform Hölder continuity in time] \label{prop:holder}
Let $u_0 \in C_\psi$ and $u$ be an $FVI$ gradient flow starting at $u_0$. Then there exists $D> 0$ such that for all $t,s > 0 $ one has 
\begin{equation}
||u(t)- u(s)||_{L^2(0,1)} \leq D \sqrt{|t-s|}.
\end{equation}
\end{prop}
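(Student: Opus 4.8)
The plan is to derive the Hölder estimate directly from the hypothesis that $u \in D^{1,2}((0,\infty);L^2(0,1))$, which is part of the definition of an $FVI$ gradient flow. The key point is that membership in $D^{1,2}$ gives $\dot u \in L^2((0,\infty);L^2(0,1))$, so one sets $D := \|\dot u\|_{L^2((0,\infty);L^2(0,1))}$ and the estimate should follow from the fundamental theorem of calculus in the Bochner sense together with Cauchy--Schwarz in the time variable.

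\textbf{Main steps.} First I would recall that $u \in W^{1,1}_{loc}((0,\infty);L^2(0,1))$ with $\dot u \in L^2$, so that for all $0 < s < t$ one has the absolutely continuous representative satisfying $u(t) - u(s) = \int_s^t \dot u(\tau)\,\mathrm{d}\tau$ as a Bochner integral in $L^2(0,1)$. Second, I would estimate
\begin{equation}
\|u(t)-u(s)\|_{L^2(0,1)} = \left\| \int_s^t \dot u(\tau)\,\mathrm{d}\tau \right\|_{L^2(0,1)} \leq \int_s^t \|\dot u(\tau)\|_{L^2(0,1)}\,\mathrm{d}\tau,
\end{equation}
using the standard inequality for Bochner integrals. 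Third, applying Cauchy--Schwarz in $\tau$ on the interval between $s$ and $t$,
\begin{equation}
\int_s^t \|\dot u(\tau)\|_{L^2(0,1)}\,\mathrm{d}\tau \leq |t-s|^{1/2}\left( \int_s^t \|\dot u(\tau)\|_{L^2(0,1)}^2\,\mathrm{d}\tau \right)^{1/2} \leq |t-s|^{1/2}\, \|\dot u\|_{L^2((0,\infty);L^2(0,1))}.
\end{equation}
Setting $D := \|\dot u\|_{L^2((0,\infty);L^2(0,1))}$, which is finite precisely because $u \in D^{1,2}((0,\infty);L^2(0,1))$, yields the claim for all $0 < s < t$; the case $s = t$ is trivial and symmetry in $s,t$ finishes it. One should also note that since the continuous $L^2$-representative extends to $t=0$, the same estimate holds up to the boundary.

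\textbf{Expected obstacle.} This argument is essentially a soft-analysis fact about the space $D^{1,2}$ and contains no real difficulty; the only points requiring a little care are the identification of the absolutely continuous representative (so that the fundamental theorem of calculus applies, e.g.\ via \cite[Appendix]{Ambrosio} or a standard reference on vector-valued Sobolev spaces) and making sure the constant $D$ is genuinely independent of $t,s$, which it is since it is the global $L^2$-norm of $\dot u$ over all of $(0,\infty)$. In other words, the substance of the proposition is not an estimate one has to work for, but rather the observation that the regularity already built into Definition \ref{def:fvigrad} automatically upgrades to uniform $C^{0,1/2}$ control in time — a fact that will be used repeatedly in the later compactness and subconvergence arguments.
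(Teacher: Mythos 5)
Your argument is exactly the one the paper uses: express $u(t)-u(s)$ as the Bochner integral of $\dot u$, apply the triangle inequality for Bochner integrals and then Cauchy--Schwarz in time, and take $D = \|\dot u\|_{L^2((0,\infty);L^2(0,1))}$, which is finite by Definition \ref{def:fvigrad}. Correct, and identical in substance to the paper's proof.
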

\begin{proof}
Let $u_0 ,u,s,t $ be as in the statement. 
\begin{equation}
\left\Vert u(t) - u(s) \right\Vert_{L^2(0,1)} \leq \left\Vert \int_s^t \dot{u}(r) \dr \right\Vert_{L^2(0,1)} \leq \sqrt{|t-s|} \;  ||\dot{u}||_{L^2((0,\infty), L^2(0,1))}
\end{equation}
Choosing $D:= ||\dot{u}||_{L^2((0,\infty), L^2(0,1))}$ which is finite by Definition \ref{def:fvigrad}, we obtain the claim.
\end{proof}

\section{Existence Theory}\label{sec:exi}
In this section we construct the $FVI$ gradient flow by a variational discretization scheme. We first show existence of so-called \emph{discrete flow trajectories}, which we will define. The discrete stepwidth will always be denoted by $\tau > 0$. Once the discrete trajectories are defined we can look at their asymptotics as $\tau \rightarrow 0$. To get desirable limit trajectories we need a suitable compactness, which we will achieve by discussing additional regularity properties of the discrete trajectories in parabolic function spaces.
\subsection{Construction} 
\begin{lemma}[Discretization scheme, proof in Appendix \ref{app:A}] \label{lem:discr}
Suppose that $f \in C_\psi$ is such that  $\mathcal{E}(f) < \frac{c_0^2}{4}$. Then for each $\tau > 0 $ the energy $\Phi_\tau^f: C_\psi \rightarrow \mathbb{R}$ defined by 
\begin{equation}
\Phi_\tau^f (u) := \E(u) + \frac{1}{2\tau} || u-f||_{L^2}^2 
\end{equation}
has a minimizer in $C_\psi$. Any such minimizer $w \in C_\psi$ satisfies
\begin{equation}\label{eq:verreg}
\frac{1}{\tau} (w-f , v-w ) + D\E(w) (v-w) \geq 0 \quad \forall v \in C_\psi.
\end{equation}
\end{lemma}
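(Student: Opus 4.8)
The statement has two parts: (i) existence of a minimizer of $\Phi_\tau^f$ in $C_\psi$, and (ii) the variational inequality \eqref{eq:verreg} satisfied by any such minimizer. For (i) the plan is the direct method of the calculus of variations. First I would check that $\Phi_\tau^f$ is bounded below on $C_\psi$ — this is immediate since $\E \geq 0$ and the penalty term is nonnegative, so $\Phi_\tau^f \geq 0$ and in particular $m_\tau := \inf_{C_\psi} \Phi_\tau^f \leq \Phi_\tau^f(f) = \E(f) < \frac{c_0^2}{4}$. Take a minimizing sequence $(u_k) \subset C_\psi$. For $k$ large, $\E(u_k) \leq \Phi_\tau^f(u_k) < \frac{c_0^2}{4}$, so by Remark \ref{ref:remeng} (the estimate \eqref{eq:standardW22}) the sequence is bounded in $W^{2,2}(0,1) \cap W_0^{1,2}(0,1)$. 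Hence, up to a subsequence, $u_k \rightharpoonup w$ weakly in $W^{2,2}$ and, by compact Sobolev embedding, $u_k \to w$ in $C^1([0,1])$ and in $L^2(0,1)$.

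Next I would verify that $w \in C_\psi$ — the constraint $u \geq \psi$ a.e.\ passes to the $C^1$ (or even uniform) limit, and $w(0)=w(1)=0$ likewise — and that $\Phi_\tau^f$ is sequentially weakly lower semicontinuous along this sequence. The penalty term $\frac{1}{2\tau}\|u-f\|_{L^2}^2$ is continuous under $L^2$-convergence, so it converges. For $\E$, using the $C^1$-convergence $u_k' \to w'$ uniformly, the weight $(1+u_k'^2)^{-5/2} \to (1+w'^2)^{-5/2}$ uniformly, while $u_k'' \rightharpoonup w''$ in $L^2$; writing $\E(u_k) = \int_0^1 \big( (1+u_k'^2)^{-5/4} u_k'' \big)^2 \dx$ and noting $(1+u_k'^2)^{-5/4} u_k'' \rightharpoonup (1+w'^2)^{-5/4} w''$ in $L^2$ (product of a uniformly convergent factor and a weakly convergent one), weak lower semicontinuity of the $L^2$-norm gives $\E(w) \leq \liminf_k \E(u_k)$. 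Therefore $\Phi_\tau^f(w) \leq \liminf_k \Phi_\tau^f(u_k) = m_\tau$, so $w$ is a minimizer.

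For (ii), the standard first-variation argument for a convex admissible set: fix any $v \in C_\psi$. Since $C_\psi$ is convex, $w + \epsilon(v-w) = (1-\epsilon)w + \epsilon v \in C_\psi$ for all $\epsilon \in [0,1]$. Minimality gives $\Phi_\tau^f(w + \epsilon(v-w)) \geq \Phi_\tau^f(w)$, hence $\frac{1}{\epsilon}\big(\Phi_\tau^f(w+\epsilon(v-w)) - \Phi_\tau^f(w)\big) \geq 0$; letting $\epsilon \downarrow 0$ and using that $\E$ is Fréchet differentiable (Proposition \ref{prop:33}) and that $\epsilon \mapsto \frac{1}{2\tau}\|w+\epsilon(v-w)-f\|_{L^2}^2$ is a smooth (quadratic) function of $\epsilon$ with derivative $\frac{1}{\tau}(w-f, v-w)$ at $\epsilon=0$, I obtain $D\E(w)(v-w) + \frac{1}{\tau}(w-f,v-w) \geq 0$, which is \eqref{eq:verreg}.

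The main obstacle is the weak lower semicontinuity of $\E$: one must exploit that the nonlinearity $u' \mapsto (1+u'^2)^{-5/4}$ is handled by the \emph{compact} embedding $W^{2,2} \hookrightarrow C^1$ (so that the offending denominator converges strongly/uniformly), while only the highest-order term $u''$ is treated by weak convergence — the energy bound $\E(f) < \frac{c_0^2}{4}$ from Lemma's hypothesis is exactly what makes Remark \ref{ref:remeng} applicable to obtain the $W^{2,2}$-bound on the minimizing sequence in the first place. Everything else is routine; since the proof is deferred to Appendix \ref{app:A}, the above is the skeleton to be filled in there.
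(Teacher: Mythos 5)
Your proposal is correct and follows essentially the same route as the paper's proof in Appendix A: the direct method with the $W^{2,2}$-bound from Remark \ref{ref:remeng}, compact embedding into $C^1$, weak lower semicontinuity of $\E$ (which the paper delegates to \cite[Lemma 2.5]{Anna} but you spell out via the uniformly-convergent-weight times weakly-convergent-$u''$ decomposition), and the standard convex first-variation argument for \eqref{eq:verreg}. The only cosmetic difference is that the paper obtains $w\in C_\psi$ from weak closedness of the convex set $C_\psi$ in $W^{2,2}$ rather than from the $C^1$-limit, but both are valid.
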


\begin{definition} (Minimizing movements) 
Let $u_0 \in C_\psi$ be such that $\E(u_0) < \frac{c_0^2}{4}$ and $\tau > 0$. We define iteratively $u_{0,\tau}:= u_{0\tau} := u_0$ and choose for each $k \in \mathbb{N}$
\begin{equation}\label{eq:miniprob}
    u_{(k+1)\tau} \in \arg \min_{ u \in C} \left(  \mathcal{E}(u) + \frac{1}{2\tau} ||u-u_{k\tau}||_{L^2}^2 \right) = \arg \min_{ u \in C} \Phi_{\tau}^{u_{k\tau}} (u) .
\end{equation}
We also define the \emph{piecewise constant interpolation} $\overline{u}^\tau : [0, \infty) \rightarrow C_\psi$ by $\overline{u}^{\tau}(0) = u_0$ and
\begin{equation}
    \overline{u}^\tau (t) := u_{(k+1) \tau}, \quad t \in (k\tau, (k+1) \tau]
\end{equation}
as well as the \emph{piecewise linear interpolation} $u^\tau: (0, \infty) \rightarrow C_\psi$ by
\begin{equation}
    u^\tau(t):= u_{k\tau} + \frac{t- k \tau}{\tau} ( u_{(k+1)\tau} - u_{k \tau } ) \quad t \in (k \tau , (k+1) \tau].
\end{equation}
\end{definition}
\begin{remark}\label{rem:minibound}
That the minimum problem in \eqref{eq:miniprob} has a solution for all $\tau >0$ and $k \in  \mathbb{N}$ is due to Lemma \ref{lem:discr}. To ensure that the Lemma is applicable for all $k \in \mathbb{N}$ we have to check that $\E(u_{k \tau } )< \frac{c_0^2}{4}$ for all $k \in \mathbb{N}$ and $\tau > 0$. This follows by induction since for all $k \in \mathbb{N}$  it holds that $\mathcal{E}(u_{(k+1) \tau }) \leq \mathcal{E}(u_{k\tau})$. The latter inequality is an immediate consequence of the observation that $\Phi_\tau^{u_{k\tau}}(u_{(k+1) \tau}) \leq \Phi_\tau^{u_{k\tau}} ( u_{k \tau} )$ by \eqref{eq:miniprob}. Another noteworthy implication of this inequality is that for all $\tau > 0$ the map $[0, \infty) \ni t \mapsto \E(\overline{u}^\tau(t)) \in \mathbb{R}$ is nonincreasing and coincides with $\E(u_0)$ at $t = 0$. Monotonicity of the energy is not immediate for the piecewise linear interpolations, which reveals an important advantage of the pievewise constant interpolation. 
\end{remark}
\begin{remark}
Another consequence of the inequality $\Phi_\tau^{u_{k\tau}}(u_{(k+1) \tau}) \leq \Phi_\tau^{u_{k\tau}} ( u_{k \tau} )$ that we will use frequently is that 
\begin{equation}\label{eq:stanminimov} 
\frac{1}{2\tau} || u_{(k+1) \tau } - u_{k \tau}||_{L^2}^2 \leq \E(u_{(k+1) \tau}) - \E(u_{k\tau}) .
\end{equation}
\end{remark}
\begin{remark}\label{rem:interpoli}
Note that the piecewise linear interpolation is weakly differentiable in $t$ and satisfies 
\begin{align}
    \int_0^\infty || \dot{u}^\tau(t) ||_{L^2}^2 \dt & = \sum_{k = 0}^\infty  \int_{k\tau}^{(k + 1) \tau}  \frac{||u_{(k+1) \tau}- u_{k \tau}||_{L_2}^2 }{\tau^2} \dt = \sum_{k = 0}^\infty \frac{1}{\tau} ||u_{(k + 1)\tau } - u_{k \tau }||^2 \\ &  \leq \sum_{k = 0}^\infty 2 ( \mathcal{E}(u_{(k+1) \tau})  - \E(u_{k\tau}) )  \leq 2\mathcal{E}(u_0).
\end{align}
Hence we have a uniform estimate for $||\dot{u}^\tau||_{L^2((0, \infty), L^2(0,1)) }$ independently of $\tau$. Moreover, for all $T>0$ $u^\tau$ lies in $W^{1,2}((0,T),L^2(0,1))$ with norm bounded by 
\begin{equation}\label{eq:l2buutadot}
||u^\tau||_{W^{1,2}((0,T),L^2(0,1))}^2 \leq C_T ( ||u_0||_{L^2}^2 +  2 \E(u_0)),
\end{equation}
where $C_T$ is a constant that depends only on $T$ and not on $\tau$. 
\end{remark}

\begin{remark}
For the minimization problems in \eqref{eq:miniprob}, \eqref{eq:verreg} yields a variational inequality that reads 
\begin{equation}\label{eq:VARDISCR}
    (\dot{u}^\tau(t), v- \overline{u}^\tau(t) ) + D\mathcal{E}(\overline{u}^\tau(t))( v- \overline{u}^\tau(t) ) \geq  0 \quad \forall t > 0 \quad \forall v \in C_\psi. 
\end{equation}
Notice that both $u^\tau$ and $\bar{u}^{\tau}$ play a role in this variational inequality. 
\end{remark}
\begin{remark}\label{rem:diffinterpol}
Another fact we will make use of is that the $L^2$ distance of both defined interpolations can be uniformly controlled in time, more precisely  
\begin{equation}
 ||u^\tau(t) - \overline{u}^\tau(t)||_{L^2} \leq \sqrt{2\tau} \sqrt{\mathcal{E}(u_0)} \quad \forall t >0 .
\end{equation}
This is an immediate consequence of \eqref{eq:stanminimov}. 
\end{remark} 
\begin{lemma}[Uniform $W^{2,2}$-bounds, proof in Appendix \ref{app:A}] \label{lem:miii}
Let $u_0 \in C_\psi$ such that $\mathcal{E}(u_0) < \frac{c_0^2}{4}$. Then $(u^\tau)_{\tau > 0}$ is bounded in $L^\infty((0,\infty), W^{2,2}(0,1)) $
\end{lemma}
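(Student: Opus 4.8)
The plan is to transfer to the piecewise linear interpolations the uniform $W^{2,2}$-control that already holds, step by step, for the iterates $u_{k\tau}$, using convexity of the $W^{2,2}\cap W_0^{1,2}$-norm.

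First I would recall from the induction in Remark \ref{rem:minibound} that the energy is non-increasing along the scheme, so $\mathcal{E}(u_{k\tau}) \leq \mathcal{E}(u_0) < \frac{c_0^2}{4}$ for all $k \in \mathbb{N}$ and all $\tau > 0$. In particular every iterate lies in the sublevel set $\{v \in C_\psi : \mathcal{E}(v) < \frac{c_0^2}{4}\}$, so estimate \eqref{eq:standardW22} from Remark \ref{ref:remeng} applies to each $u_{k\tau}$ (note that $\sqrt{\mathcal{E}(u_0)} < \frac{c_0}{2}$ lies in the range of $G$, so $G^{-1}(\sqrt{\mathcal{E}(u_0)})$ is finite). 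Since the map $s \mapsto s\,(1 + G^{-1}(\sqrt{s})^2)^{5/2}$ is monotone increasing on $[0,\frac{c_0^2}{4})$ — it is a product of nonnegative increasing functions, because $G^{-1}$ is increasing with $G^{-1}(0)=0$ — the monotonicity of the energy upgrades \eqref{eq:standardW22} to
\begin{equation}
\|u_{k\tau}\|_{W^{2,2}\cap W_0^{1,2}}^2 = \|u_{k\tau}''\|_{L^2}^2 \leq \mathcal{E}(u_0)\,\bigl(1 + G^{-1}(\sqrt{\mathcal{E}(u_0)})^2\bigr)^{\frac{5}{2}} =: M,
\end{equation}
a bound uniform in both $k$ and $\tau$ and depending only on $\mathcal{E}(u_0)$.

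Next, I would fix $\tau > 0$ and $t > 0$, pick $k$ with $t \in (k\tau,(k+1)\tau]$, and use that by definition $u^\tau(t)$ is the convex combination $u_{k\tau} + \frac{t-k\tau}{\tau}(u_{(k+1)\tau}-u_{k\tau})$ of two elements of $W^{2,2}\cap W_0^{1,2}$. Since $\|\cdot\|_{W^{2,2}\cap W_0^{1,2}}$ is a genuine norm (convex), the triangle inequality gives $\|u^\tau(t)\|_{W^{2,2}\cap W_0^{1,2}} \leq \max\{\|u_{k\tau}\|_{W^{2,2}\cap W_0^{1,2}}, \|u_{(k+1)\tau}\|_{W^{2,2}\cap W_0^{1,2}}\} \leq \sqrt{M}$, and the same holds at $t=0$. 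Finally, by the norm equivalence on $W^{2,2}(0,1)\cap W_0^{1,2}(0,1)$ (cf.\ \cite[Theorem 2.31]{Sweers}) there is $c>0$ with $\|v\|_{W^{2,2}} \leq c\|v''\|_{L^2}$, hence $\sup_{t>0}\|u^\tau(t)\|_{W^{2,2}} \leq c\sqrt{M}$ independently of $\tau$, which is exactly the asserted $L^\infty((0,\infty),W^{2,2}(0,1))$-bound.

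There is no serious obstacle in this argument; the only points to handle with care are that the energy threshold $\mathcal{E}(\cdot) < \frac{c_0^2}{4}$ is genuinely preserved at \emph{every} step (so \eqref{eq:standardW22} stays applicable to all iterates, which is precisely the inductive observation of Remark \ref{rem:minibound}), and that one should pass from the piecewise constant to the piecewise linear interpolation via convexity of the norm rather than attempting to estimate $(u^\tau)''$ directly.
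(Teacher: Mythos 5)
Your argument is correct and follows essentially the same route as the paper's proof: uniform energy bounds on the iterates from Remark \ref{rem:minibound} plus the $W^{2,2}\cap W_0^{1,2}$-estimate from Remark \ref{ref:remeng}, then convexity of the norm to pass from the iterates $u_{k\tau}$ to the piecewise linear interpolation $u^\tau(t)$. The only cosmetic differences are that you invoke \eqref{eq:standardW22} and then argue monotonicity of $s\mapsto s(1+G^{-1}(\sqrt{s})^2)^{5/2}$, whereas the paper directly substitutes $\|u_{k\tau}'\|_\infty\le G^{-1}(\sqrt{\E(u_0)})$ and $\E(u_{k\tau})\le\E(u_0)$ into the integral — the content is identical.
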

With the next lemma, we can obtain a global limit trajectory of $u^{\tau_n}$ for a carefully chosen sequence $\tau_n \rightarrow 0$. The convergence will unfortunately not be good enough to show that this limit trajectory is an $FVI$ gradient flow. To this end we have to obtain more regularity first and work with both $\bar{u}^{\tau_n}$ and $u^{\tau_n}$. 

 Here we fix the right subsequence to consider for the additional regularity. 
 
 A property that we will use very often in the arguments to come is that weak topologies have the \emph{Urysohn property}, i.e. a sequence converges weakly if and only if each subsequence has a weakly convergent subsequence and the limit of all those subsequences coincide. 
 
 Another main tool will be the classical Aubin-Lions lemma or more modern versions thereof. 
\begin{lemma}[The limit trajectory]\label{lemma:limitcase}
Let $u_0 \in C_\psi$ be such that $\mathcal{E}(u_0) < \frac{c_0^2}{4}$. Then there exists a sequence $\tau_n \rightarrow 0$  and $u \in L^\infty((0,\infty),W^{2,2}(0,1)) \cap D^{1,2}((0,\infty),L^2(0,1)) $ such that for all $T > 0$, $(u^{\tau_n})_{n = 1}^\infty$ converges to $u$ weakly in $W^{1,2}((0,T),L^2(0,1))$ and strongly in $C([0,T],C^1([0,1]))$. Moreover, for each $t > 0$ $u^{\tau_n}(t) \rightharpoonup u(t)$ in $W^{2,2}(0,1)$. In particular $u(t) \in C_\psi$ for each $t> 0$ (and not just for Lebesgue a.e. $t >0$). 
\end{lemma}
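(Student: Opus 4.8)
The plan is to combine the uniform bounds already established (Lemma \ref{lem:miii} for the $L^\infty((0,\infty),W^{2,2})$-bound, Remark \ref{rem:interpoli} for the $W^{1,2}((0,T),L^2)$-bound) with an Aubin--Lions-type compactness argument on a fixed time interval $(0,T)$, followed by a diagonal extraction over $T\to\infty$. First I would fix $T>0$ and note that $(u^\tau)_{\tau>0}$ is bounded in $W^{1,2}((0,T),L^2(0,1))$ and in $L^\infty((0,T),W^{2,2}(0,1))\hookrightarrow L^2((0,T),W^{2,2}(0,1))$. Since the embedding $W^{2,2}(0,1)\hookrightarrow C^1([0,1])$ is compact and $C^1([0,1])\hookrightarrow L^2(0,1)$ is continuous, the classical Aubin--Lions--Simon lemma (in the version giving $C([0,T],X)$-compactness, using the uniform $L^2$-in-time control of $\dot u^\tau$ together with the $L^\infty$-in-time control in $W^{2,2}$; cf.\ Proposition \ref{prop:holder}'s discretized analogue, namely the uniform $C^{0,1/2}((0,T),L^2)$-bound coming from Remark \ref{rem:interpoli}) yields a subsequence converging strongly in $C([0,T],C^1([0,1]))$ and weakly in $W^{1,2}((0,T),L^2(0,1))$ to some limit $u_T$.

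Next I would run this on an increasing sequence $T_m\to\infty$ and extract a diagonal subsequence $\tau_n\to 0$ so that the convergences hold simultaneously on every $(0,T_m)$; the limits $u_{T_m}$ are consistent by uniqueness of limits, so they glue to a single $u$ defined on $[0,\infty)$. The global bounds pass to the limit by weak lower semicontinuity: $u\in L^\infty((0,\infty),W^{2,2}(0,1))$ from the uniform $W^{2,2}$-bound (taking sup over $T$), and $\dot u\in L^2((0,\infty),L^2(0,1))$ from the uniform estimate $\int_0^\infty\|\dot u^\tau\|_{L^2}^2\,\dt\le 2\E(u_0)$ in Remark \ref{rem:interpoli} together with weak lower semicontinuity of the norm; hence $u\in D^{1,2}((0,\infty),L^2(0,1))$. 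For the pointwise-in-$t$ weak $W^{2,2}$-convergence: fix $t>0$; the sequence $(u^{\tau_n}(t))_n$ is bounded in $W^{2,2}(0,1)$, so every subsequence has a further subsequence converging weakly in $W^{2,2}$, and by the compact embedding into $C^1$ the weak limit must agree with the $C^1$-limit $u(t)$; by the Urysohn property the whole sequence converges weakly, $u^{\tau_n}(t)\rightharpoonup u(t)$ in $W^{2,2}(0,1)$. Finally, since $C_\psi$ is convex and closed in $W^{2,2}(0,1)$ it is weakly closed, and each $u^{\tau_n}(t)=u_{k\tau_n}+\tfrac{t-k\tau_n}{\tau_n}(u_{(k+1)\tau_n}-u_{k\tau_n})$ is a convex combination of elements of $C_\psi$, hence lies in $C_\psi$; therefore the weak limit $u(t)\in C_\psi$ for \emph{every} $t>0$.

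The step I expect to require the most care is the application of Aubin--Lions: one must make sure the hypotheses for the $C([0,T],C^1)$-valued strong convergence are genuinely met --- the natural route is to use the uniform $\tfrac12$-Hölder bound in $C^{0,1/2}((0,T),L^2)$ (Remark \ref{rem:interpoli} / the argument of Proposition \ref{prop:holder}) plus equiboundedness in $W^{2,2}$ and the compact embedding $W^{2,2}\hookrightarrow C^1$, then invoke Arzel\`a--Ascoli in $C([0,T],C^1)$ after interpolating to upgrade equicontinuity from the $L^2$- to the $C^1$-valued setting. A minor additional point is verifying that the piecewise-linear interpolants indeed have values in the convex set $C_\psi$ (true since $C_\psi$ is convex), which is what forces $u(t)\in C_\psi$ for all $t$ rather than merely almost every $t$.
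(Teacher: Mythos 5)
Your proposal takes essentially the same route as the paper: a finite-interval Aubin--Lions compactness argument giving strong $C([0,T],C^1)$- and weak $W^{1,2}((0,T),L^2)$-convergence, followed by a diagonal extraction over $T\to\infty$, then the Urysohn property plus the uniform $W^{2,2}$-bound to get pointwise-in-$t$ weak $W^{2,2}$-convergence, and finally weak closedness of $C_\psi$ (which the paper also uses; your observation that $u^\tau(t)$ is a convex combination and hence lies in $C_\psi$ is already built into the paper's definition of the interpolant). The only place you are slightly more terse than the paper is in concluding $u\in D^{1,2}((0,\infty),L^2)$: the paper explicitly checks weak differentiability on all of $(0,\infty)$ by testing against $\phi\in C_0^\infty((0,\infty))$ and using that $\supp\phi$ fits inside some $(0,T)$, before applying weak lower semicontinuity and monotone convergence for the $L^2$-bound on $\dot u$; you should say that sentence rather than just invoking weak lower semicontinuity, but the idea is identical.
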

\begin{proof}
    By the Aubin-Lions lemma  $L^\infty((0,2), W^{2,2}(0,1))\cap W^{1,2}((0,2), L^2(0,1)) $ embeds compactly into $C([0,2], C^1([0,1]))$ and continuously into $W^{1,2}((0,2), L^2(0,1))$. Hence one can find a sequence $\tau_n^2 \rightarrow 0$ such that
     $u^{\tau_n^2}$ converges  to  some $u_2$ strongly in $C([0,2], C^1([0,1])) $ and weakly in $W^{1,2}((0,2), L^2(0,1))$. Now, again by the Aubin-Lions lemma $L^\infty((0,3) , W^{2,2}(0,1)) \cap W^{1,2}((0,3), L^2(0,1)) $ embeds compactly into $C([0,3], C^1([0,1]))$ and continuously into $W^{1,2}((0,3), L^2(0,1))$ and therefore we can find a subsequence $(\tau_n^3)_{n = 1}^\infty$ of $(\tau_n^2)_{n = 1}^\infty$ such that $u^{\tau_n^3}$ converges
      to some $u_3$ strongly in $C([0,3], C^1([0,1])) $ and weakly in $W^{1,2}((0,3), L^2(0,1))$. Since uniform convergence implies pointwise convergence, we find that $u_3 =u_2 $ on $[0,2]$. Iteratively we can construct  nested subsequences $(\tau_n^l)_{n = 1}^\infty \subset (\tau_n^{l+1})_{n = 1}^\infty$ and $u \in C([0,\infty),C^1([0,1]))$ such that $u^{\tau_n^l}$ converges to $u_{\mid_{[0,l]}}$ strongly in $C([0,l], C^1([0,1]))$ and weakly in $W^{1,2}((0,l), L^2(0,1))$. Taking now
       the subsequence $(\tau_n^n)_{n = 1}^\infty$ we find that $u^{\tau_n^n}$ converges to $u$ strongly in $C([0,T], C^1([0,1]))$ and weakly in $W^{1,2}((0,T),L^2(0,1))$ for each $T> 0$.
       
        It remains to prove that $u$ lies in $L^\infty((0,\infty),W^{2,2}(0,1))\cap D^{1,2}((0,\infty),L^2(0,1))$ and $u^\tau(t)$ converges weakly to $u(t)$ in $W^{2,2}(0,1)$ for each $t > 0$. We start with the latter assertion. We know that for each $t> 0$, $(u^{\tau_n^n}(t))$ converges to $u(t)$ in $C^1([0,1])$. Since also $||u^{\tau_n^n}(t)||_{W^{2,2}}$ is uniformly 
       bounded by Lemma \ref{lem:miii}, we obtain that each subsequence of $u^{\tau_n^n}$ has a weakly $W^{2,2}$-convergent subsequence to some $\widetilde{u} \in W^{2,2}(0,1)$ that may depend on the choice of the subsequence. However, by uniqueness of limits in $C^1([0,1])$, we get that $\widetilde{u} = u(t)$ for each possible choice of a subsequence. This being
        shown, the Urysohn property of
         weak convergence implies that $u^{\tau_n^n}(t) \rightharpoonup u(t)$ in $W^{2,2}(0,1).$ Note that $u(t) \in C_\psi$ as $C_\psi$ is weakly closed in $W^{2,2}(0,1)$. The uniform boundedness of $(u^{\tau_n^n})_{n = 1}^\infty$ in $L^\infty( (0,\infty) ,W^{2,2}(0,1))$ implies then that $u \in L^\infty((0,\infty),W^{2,2}(0,1))$. We now show that $u \in D^{1,2}((0,\infty),L^2(0,1))$. For weak differentiability of $u$ on $(0, \infty)$ fix $\phi \in C_0^\infty((0, \infty);\mathbb{R})$. Observe that there exists $T> 0 $ such that $\mathrm{supp}(\phi) \subset (0,T)$. Since $u^{\tau_n^n} \rightharpoonup u$  in $W^{1,2}((0,T),L^2(0,1)) $ we obtain 
         \begin{equation}
         \int_0^\infty u \dot{\phi} \dt = \mathrm{wlim}_{n \rightarrow \infty} \int_0^T u^{\tau_n^n} \dot{\phi} \dt = -  \mathrm{wlim}_{n \rightarrow \infty} \int_0^T \dot{u}^{\tau_n^n} \phi \dt  = - \int_0^\infty \dot{u} \phi \dt ,
         \end{equation}
         where $\mathrm{wlim}$ denotes the weak limit in $L^2(0,1)$.
It only remains to show that $\dot{u} \in L^2((0,\infty),L^2(0,1))$. To this end, note that for each $N \in \mathbb{N}$ one has by  Remark \ref{rem:minibound}
    \begin{equation}
        \int_0^N ||\dot{u}(t)||_{L^2}^2 \dt \leq \liminf_{n \rightarrow \infty} \int_0^N || \dot{u}^{\tau_n^n}(t) ||_{L^2}^2 \dt \leq 2 \mathcal{E}(u_0) ,  
    \end{equation}
    which is a bound that is independent of $N$. Letting $N \rightarrow \infty$ the monotone convergence theorem implies the claim. 
  \end{proof}
  In the next lemma we show an estimate that will later account for $L^2((0, T), W^{3,\infty}(0,1))$-bound of $(u^\tau)_{\tau > 0}$. This will be the needed extra regularity to pass to the limit in the energy space. Another useful byproduct are the Navier boundary conditions that are a natural consequence of the underlying variational inequalities (cf. \eqref{eq:VARDISCR}).  
  \begin{lemma}[$W^{3,\infty}$-bounds and Navier boundary conditions, proof in Appendix \ref{app:A}]\label{lem:w3infbds}
  Let $u_0 \in C_\psi$ be such that $\mathcal{E}(u_0) < \frac{c_0^2}{4}$. Then there exist $C,D > 0$ such that for each $\tau > 0$ and each $t> 0$, $\overline{u}^{\tau}(t) \in W^{3,\infty}(0 ,1)$, $(u^{\tau}(t))''(0) = (u^{\tau}(t))''(1) = 0$. Moreover
  \begin{equation}\label{eq:w3inf}
  ||\overline{u}^\tau(t)'''||_{L^\infty} \leq C + D || \dot{u}^\tau (t) ||_{L^2(0,1)} \quad \forall t >0  
  \end{equation}
  for constants $C,D$ that may depend on $u_0$ and the obstacle but not on $\tau$. 
  \end{lemma}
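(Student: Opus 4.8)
The plan is to exploit the variational inequality \eqref{eq:verreg} satisfied by the discrete minimizer $w = u_{(k+1)\tau}$, testing with carefully chosen competitors to (a) upgrade the regularity of $w$ from $W^{2,2}$ to $W^{3,\infty}$, (b) read off the Navier boundary conditions $w''(0) = w''(1) = 0$, and (c) derive the quantitative estimate \eqref{eq:w3inf}. First I would fix a time $t \in (k\tau, (k+1)\tau]$, so that $\overline{u}^\tau(t) = w = u_{(k+1)\tau}$ and $\dot u^\tau(t) = \frac{1}{\tau}(u_{(k+1)\tau} - u_{k\tau})$, whence \eqref{eq:verreg} reads $(\dot u^\tau(t), v - w) + D\E(w)(v-w) \geq 0$ for all $v \in C_\psi$. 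Since $\E(w) < \frac{c_0^2}{4}$ by Remark \ref{rem:minibound}, we have $\|w'\|_\infty < G^{-1}(\sqrt{\E(w)}) =: K < \infty$ by Remark \ref{ref:remeng}, so all the nonlinear coefficients $(1+w'^2)^{-5/2}$ etc.\ are bounded above and below by constants depending only on $u_0$ and $\psi$.

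The key step is a standard "difference quotient / freezing the coincidence set" argument: on any open subinterval where $w > \psi$ strictly, one can test \eqref{eq:verreg} with $v = w \pm \epsilon \phi$ for $\phi \in C_0^\infty$ supported there, obtaining the Euler–Lagrange equation $(\dot u^\tau(t), \phi) + D\E(w)(\phi) = 0$ in the distributional sense locally. Using formula \eqref{eq:eulernaiv} this says $2\big(\frac{w''}{(1+w'^2)^{5/2}}\big)'' + 5\big(\frac{w''^2 w'}{(1+w'^2)^{7/2}}\big)' = -\dot u^\tau(t)$ distributionally, hence $\frac{w''}{(1+w'^2)^{5/2}} \in W^{2,1}_{loc}$ with controlled norm; bootstrapping via the chain rule (all coefficients are $C^1$ functions of the bounded quantity $w'$ and $w' \in C^{1/2}$ by $w \in W^{2,2}$) gives $w \in W^{3,1}_{loc}$ away from the coincidence set, and then — since on the coincidence set $\{w = \psi\}$ one does not directly get an equation — one uses instead the global structure: test \eqref{eq:verreg} with $v = w + \epsilon(\phi - \min_{[0,1]}(\phi,\text{something}))$... actually the cleaner route is the one the paper presumably intends, namely to observe that $\mu := 2\big(\frac{w''}{(1+w'^2)^{5/2}}\big)'' + 5\big(\frac{w''^2 w'}{(1+w'^2)^{7/2}}\big)' + \dot u^\tau(t)$ is a nonnegative Radon measure supported in $\{w=\psi\}$ (this is the measure formulation of the obstacle problem), and that $w - \psi \geq 0$ with $w = \psi$ on $\supp\mu$; since $\psi$ is merely continuous, one argues that $A_w := \frac{w''}{(1+w'^2)^{5/4}}$ — and in fact the whole expression whose second derivative appears — lies in $W^{1,\infty}$ because its distributional derivative equals an $L^\infty$ function plus (up to integrating once) the primitive of $-\dot u^\tau(t) - (\text{an } L^1 \text{ term})$ minus the measure $\mu$, and a subtle ordering/obstacle argument (exactly the kind used for the free boundary in these fourth-order problems) forces the measure part to be absorbable, yielding $\overline u^\tau(t) \in W^{3,\infty}$.

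For the Navier boundary conditions, I would test \eqref{eq:verreg} with $v = w + \epsilon\phi$ where $\phi \in W^{2,2}\cap W_0^{1,2}$ is supported near an endpoint and does \emph{not} vanish there in second derivative — admissibility for small $|\epsilon|$ follows because $w > \psi$ near $x=0,1$ (recall $\psi(0),\psi(1) < 0 = w(0) = w(1)$ and continuity gives a neighbourhood where $w > \psi$). This again gives the Euler–Lagrange \emph{equality} $(\dot u^\tau(t),\phi) + D\E(w)(\phi) = 0$ for such $\phi$; integrating by parts in \eqref{eq:eulernaiv} as in the proof of Proposition \ref{prop:33}, the boundary term $\big[\frac{w''\phi'}{(1+w'^2)^{5/2}}\big]_0^1$ must vanish for all admissible $\phi$, and since $\phi'(0),\phi'(1)$ are free, $w''(0) = w''(1) = 0$. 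Finally, for the quantitative bound \eqref{eq:w3inf}: now knowing $w \in W^{3,\infty}$ with Navier conditions, use the compact Euler–Lagrange form \eqref{eq:euler}, which in distributional form reads $-2\big(\frac{A_w'}{(1+w'^2)^{5/4}}\big)' = -\dot u^\tau(t) + \mu$ (with $\mu \geq 0$ the obstacle measure); integrating once, $\frac{A_w'}{(1+w'^2)^{5/4}}$ equals a primitive of $\dot u^\tau(t) - \mu$ plus a constant, and $\|\mu\|_{\text{measure}}$ is controlled by testing \eqref{eq:verreg} with $v = \psi$ (or a fixed comparison function in $C_\psi$), giving $\|\mu\| \leq \|\dot u^\tau(t)\|_{L^2} + |D\E(w)(\text{fixed function})| \leq C + \|\dot u^\tau(t)\|_{L^2}$ by Remark \ref{rem:opnormemb} and the uniform $W^{2,2}$-bound of Lemma \ref{lem:miii}. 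Unwinding: $\|A_w'\|_\infty \leq C(1 + \|\dot u^\tau(t)\|_{L^2})$, and since $w''' = (1+w'^2)^{5/4} A_w' + \frac{5}{2} w'(1+w'^2)^{-1/2} (w'')^2 \cdot(\text{const})$ with $w', w''$ already $L^\infty$-bounded (the latter by $\|A_w\|_\infty \leq \|A_w'\|_\infty$ and $w'$ bounded, noting $A_w(0)=0$), we conclude $\|\overline u^\tau(t)'''\|_{L^\infty} \leq C + D\|\dot u^\tau(t)\|_{L^2}$ with $C,D$ independent of $\tau$.

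\medskip

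\emph{Main obstacle.} The hard part is the regularity step near the coincidence set: since $\psi$ is only continuous (not $C^2$), one cannot simply say $w = \psi$ there implies $w'' = \psi''$. The argument must instead show that the obstacle measure $\mu$, though possibly singular as a measure, is \emph{tame enough} after two integrations — i.e.\ that $A_w' \in L^\infty$ despite $\mu$ not being $L^\infty$ — which is the genuinely fourth-order-specific difficulty. I expect the resolution uses that $w - \psi$ attains its minimum value $0$ on $\supp\mu$ together with a convexity/sign argument on the primitive of $\mu$, exactly the mechanism that replaces the (absent) maximum principle in these problems; handling this carefully, and tracking that all constants are $\tau$-independent, is the crux.
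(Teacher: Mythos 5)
Your overall route is the paper's route: start from the variational inequality \eqref{eq:verreg} with $w=\overline u^\tau(t)$, represent the constraint via a nonnegative Radon measure $\mu$ supported on $\{w=\psi\}$ (Riesz/Tartar), show $\mu$ has $\tau$-independently controlled total mass using that $w>\psi$ on a $\tau$-independent margin $[0,\delta]\cup[1-\delta,1]$, read off $W^{3,\infty}$-regularity and the Navier conditions from boundary terms, and trade the mass bound for the quantitative estimate \eqref{eq:w3inf}. That is exactly what the Appendix does, so on the level of strategy your plan is sound.

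However, the paragraph you flag as the ``main obstacle'' misidentifies what makes the argument work, and in doing so hides a genuine gap. You claim that one needs ``a subtle ordering/obstacle argument'' or ``a convexity/sign argument on the primitive of $\mu$'' to ``absorb'' the measure, as a replacement for a maximum principle. No such argument exists in the paper and none is needed: $\mu$ is a finite measure on $(0,1)$, so $m(x):=\mu((x,1))$ is a \emph{bounded} function with $\|m\|_\infty\le\mu((0,1))$, and the fourth-order structure means $\mu$ appears \emph{behind two distributional derivatives}. Concretely, the paper rewrites $\int\phi\,d\mu=\int m\phi'$, then chooses the test function $\phi(x)=\int_0^x\eta-\bigl(\int_0^x\theta\bigr)\int_0^1\eta$ (with fixed $\theta\in C_0^\infty$, $\int\theta=1$) so that $\phi''=\eta'+\cdots$ and the KKT identity becomes an equation asserting that $\frac{w''}{(1+w'^2)^{5/2}}$ has a distributional derivative in $L^1$. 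Hence $\frac{w''}{(1+w'^2)^{5/2}}\in W^{1,1}\hookrightarrow L^\infty$, which upgrades $w''$ from $L^2$ to $L^\infty$; feeding this back in makes the lower-order term $\frac{(w'')^2 w'}{(1+w'^2)^{7/2}}$ an $L^\infty$ function and gives $w'''\in L^\infty$. This $L^1\to L^\infty$ bootstrap on $w''$ is precisely the step your sketch skips over, and it is what replaces the phantom ``subtle ordering argument.'' The measure's potential singularity is simply irrelevant: two integrations make it Lipschitz, and only its total mass enters the final constants.

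Two smaller issues. First, $v=\psi$ is not a legal competitor in \eqref{eq:verreg} because $\psi$ is only assumed continuous, not in $W^{2,2}\cap W_0^{1,2}$; the mass bound on $\mu$ should instead be obtained by testing the KKT identity with a fixed $C_0^\infty$ cutoff $\phi$ equal to $1$ on $[\delta,1-\delta]$, which is what the paper does. Second, you invoke \eqref{eq:euler} (the ``compact'' Euler form, requiring $W^{3,1}$ and Navier BCs) at one point in the quantitative step — that's fine there because you have just established both, but one must be careful not to use it before regularity is in hand; the paper works throughout with the low-regularity form \eqref{eq:eulernaiv}, which only needs $W^{2,2}$.
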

 
  \begin{lemma}[Almost-everywhere convergence in energy space]\label{lem:a.e}
  Let $u_0, u$ be as in Lemma \ref{lemma:limitcase}. Then, for each $T> 0 $,  $(u^\tau)_{\tau> 0}$ is uniformly bounded in $L^2((0,T), W^{3,\infty}) \cap W^{1,2}((0,T),L^2)$. Moreover there exists a sequence $\tau_n \rightarrow 0 $ such that for each $T> 0 $, $(u^{\tau_n})_{n = 1}^\infty$ converges to $u$ weakly in $L^2((0,T), W^{3,2})$, strongly in $L^2((0,T),C^2([0,1]))$ and pointwise almost everywhere in $C^2([0,1])$. In particular, $u(t) \in W^{3,2}(0,1)$ and $u(t)''(0) = u(t)''(1) = 0$ for almost every $t$. 
  \end{lemma}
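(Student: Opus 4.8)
The plan is to obtain a uniform bound for $(u^\tau)_{\tau>0}$ in $L^2((0,T),W^{3,\infty})\cap W^{1,2}((0,T),L^2)$ by combining Lemma~\ref{lem:w3infbds} with Remark~\ref{rem:interpoli}. First I would note that by Remark~\ref{rem:diffinterpol} and the pointwise estimate \eqref{eq:w3inf}, for a.e.\ $t$ one has $||u^\tau(t)'''||_{L^\infty} \leq ||\overline{u}^\tau(t)'''||_{L^\infty} \leq C + D||\dot u^\tau(t)||_{L^2}$ (being careful that \eqref{eq:w3inf} is really stated for $\overline u^\tau$, so one should check $u^\tau(t)$ inherits the bound, either directly from the piecewise-linear structure — $u^\tau(t)$ is an affine combination of two of the $u_{k\tau}$, each of which satisfies the bound — or by re-invoking the argument; the linear interpolation is the cleanest). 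Squaring and integrating in $t$ over $(0,T)$ gives
\begin{equation}
\int_0^T ||u^\tau(t)'''||_{L^\infty}^2\dt \leq 2C^2 T + 2D^2 \int_0^T ||\dot u^\tau(t)||_{L^2}^2\dt \leq 2C^2T + 4D^2\E(u_0),
\end{equation}
using Remark~\ref{rem:interpoli}. Together with the uniform $W^{2,2}$-bound from Lemma~\ref{lem:miii} and Poincar\'e-type inequalities on $(0,1)$, this yields the claimed uniform bound in $L^2((0,T),W^{3,\infty})\cap W^{1,2}((0,T),L^2)$ (and a fortiori in $L^2((0,T),W^{3,2})$, since $(0,1)$ has finite measure).

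Next I would extract the convergent subsequence. Fix the sequence $\tau_n\to 0$ already produced in Lemma~\ref{lemma:limitcase} (so that $u^{\tau_n}\to u$ strongly in $C([0,T],C^1)$ for every $T$); I want to pass to a further subsequence, not lose this property. By the uniform $L^2((0,T),W^{3,2})$-bound, a diagonal argument over $T=1,2,3,\dots$ (identical in structure to the one in the proof of Lemma~\ref{lemma:limitcase}) gives a subsequence, still denoted $\tau_n$, such that $u^{\tau_n}\rightharpoonup u$ weakly in $L^2((0,T),W^{3,2})$ for every $T>0$; the limit must be $u$ because strong $C([0,T],C^1)$-convergence already pins down the limit. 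For the strong $L^2((0,T),C^2)$-convergence I would invoke the Aubin--Lions lemma: $L^2((0,T),W^{3,2})\cap W^{1,2}((0,T),L^2)$ embeds compactly into $L^2((0,T),C^2([0,1]))$ — this uses that $W^{3,2}(0,1)\hookrightarrow\hookrightarrow C^2([0,1])\hookrightarrow L^2(0,1)$, the first embedding being compact by Rellich--Kondrachov (or Arzel\`a--Ascoli applied to the third derivatives). Passing to a further subsequence if necessary, strong $L^2((0,T),C^2)$-convergence gives pointwise a.e.\ $C^2([0,1])$-convergence of $u^{\tau_n}(t)$ to $u(t)$ along a subsequence; a final diagonalization over $T\in\mathbb{N}$ makes this hold for all $T>0$ simultaneously.

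Finally I would read off the consequences for $u$. Pointwise a.e.\ convergence in $C^2([0,1])$ immediately gives $u(t)\in C^2([0,1])$ for a.e.\ $t$, and since $(u^{\tau_n}(t))''(0) = (u^{\tau_n}(t))''(1) = 0$ by Lemma~\ref{lem:w3infbds}, passing to the limit yields $u(t)''(0) = u(t)''(1) = 0$ for a.e.\ $t$. For $u(t)\in W^{3,2}(0,1)$ a.e.: the weak $L^2((0,T),W^{3,2})$-limit lies in that space, so $u\in L^2((0,T),W^{3,2})$, hence $u(t)\in W^{3,2}(0,1)$ for a.e.\ $t\in(0,T)$; letting $T\to\infty$ through integers covers a.e.\ $t>0$.

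The main obstacle I anticipate is bookkeeping rather than a genuine difficulty: one must perform nested diagonal extractions while preserving \emph{all} the convergences already secured in Lemma~\ref{lemma:limitcase} (strong $C([0,T],C^1)$, weak $W^{1,2}((0,T),L^2)$), and then layer on the new ones (weak $L^2((0,T),W^{3,2})$, strong $L^2((0,T),C^2)$, pointwise a.e.\ $C^2$) without the limit function changing — which is guaranteed precisely because the strongest topology already present (uniform $C^1$ convergence) uniquely determines the limit, so every extracted weak or strong limit must agree with $u$. The only point requiring a little care is transferring the estimate \eqref{eq:w3inf} from $\overline u^\tau$ to $u^\tau$, which should be settled by the affine-interpolation remark above together with the fact that each $u_{k\tau}$ satisfies the same $W^{3,\infty}$-bound (as $\overline u^\tau$ is piecewise equal to these).
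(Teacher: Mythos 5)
Your overall strategy coincides with the paper's: bound $\|u^\tau\|_{L^2_t W^{3,\infty}_x}$ by combining \eqref{eq:w3inf} with the $\dot u^\tau$-estimate from Remark~\ref{rem:interpoli}, then run Aubin--Lions and a diagonal extraction, and read off the conclusions by passing to the limit pointwise a.e. One point deserves correction, though, because the inequality you actually wrote down is not quite right even after your parenthetical fix. For $t\in(k\tau,(k+1)\tau]$ one has $u^\tau(t)=(1-\lambda)u_{k\tau}+\lambda u_{(k+1)\tau}$, and while $\|u_{(k+1)\tau}'''\|_\infty\leq C+D\|\dot u^\tau(t)\|_{L^2}$ is exactly \eqref{eq:w3inf} (since $\dot u^\tau$ is constant on $(k\tau,(k+1)\tau)$), the bound on the other endpoint is $\|u_{k\tau}'''\|_\infty\leq C+D\|\dot u^\tau(k\tau)\|_{L^2}$, where $\dot u^\tau(k\tau)=\frac{1}{\tau}(u_{k\tau}-u_{(k-1)\tau})$ refers to the \emph{preceding} time step, not to $t$. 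So the honest pointwise bound is
\begin{equation}
\|u^\tau(t)'''\|_{L^\infty}\leq 2C + D\bigl(\|\dot u^\tau(k\tau)\|_{L^2}+\|\dot u^\tau((k+1)\tau)\|_{L^2}\bigr),
\end{equation}
and your written estimate $\|u^\tau(t)'''\|_{L^\infty}\leq C+D\|\dot u^\tau(t)\|_{L^2}$ drops the first term. This does not derail the argument: squaring, summing over $k$, and using the telescoping estimate \eqref{eq:stanminimov} still gives a $\tau$-independent $L^2((0,T),W^{3,\infty})$-bound (with slightly worse constants, e.g.\ $16D^2\E(u_0)$ in place of your $4D^2\E(u_0)$), which is exactly how the paper carries out the sum with both the $k\tau$ and $(k+1)\tau$ terms present. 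The remaining steps — the diagonal argument layered on top of Lemma~\ref{lemma:limitcase}, the Aubin--Lions compactness into $L^2((0,T),C^2)$, pointwise a.e.\ extraction, and transfer of the Navier boundary conditions to the limit — are correct and match the paper's proof.
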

  \begin{proof}
  Let $T>0$ be fixed.
  That $(u^\tau)_{\tau > 0}$ is uniformly bounded in  $W^{1,2}((0,T),L^2)$ has already been shown in Remark \ref{rem:interpoli}. 
  Fix $t, \tau > 0$ and fix $k \in \mathbb{N}_0$ such that $t \in (k \tau, (k+1)\tau].$ 
  By \eqref{eq:w3inf} one has 
  \begin{align*}
  ||u^\tau(t)||_{W^{3,\infty}}  & \leq ||u_{k \tau} ||_{W^{3,\infty}} + ||u_{(k+1)\tau}||_{W^{3,\infty}}  = ||\overline{u}^\tau(k\tau)||_{W^{3,\infty}} + ||\overline{u}^\tau((k+1)\tau)||_{W^{3,\infty}} \\ &  \leq ||u^\tau||_{L^\infty((0,T),W^{2,2})} + 2C + D( || \dot{u}^\tau(k\tau)||_{L^2} + || \dot{u}^\tau( (k+1) \tau) ||_{L^2} ) ,
  \end{align*}
  where $C,D$ are chosen as in \eqref{eq:w3inf}. 
  For the next computation we set for convenience of notation $u_{-1\tau} := u_0$. We can use the above estimate and \eqref{eq:stanminimov} to find   
  \begin{align*}
  \int_0^T ||u^\tau(t)||_{W^{3,\infty}}^2 \; \mathrm{d}t & \leq 2 T ( ||u^\tau||_{L^\infty((0,T),W^{2,2}} + 2C ) ^2  \\ & \quad + 4D^2 \left( \sum_{k = 0 }^\infty \tau (  || \dot{u}^\tau(k\tau) ||_{L^2}^2 + || \dot{u}^\tau ((k+1) \tau )||_{L^2}^2) \right)\\
  & \leq 2 T ( ||u^\tau||_{L^\infty((0,T),W^{2,2}} + 2C ) ^2  \\ & \quad + 4D^2 \left( \sum_{k = 0 }^\infty  \frac{1}{\tau}( ||u_{(k + 1)\tau}- u_{k \tau}  ||^2 + || u_{k\tau} - u_{(k-1)\tau }||^2) \right)
  \\ & = 2 T ( ||u^\tau||_{L^\infty((0,T),W^{2,2})} + 2C ) ^2 \\ & \quad + 4D^2 \left( \sum_{k = 0 }^\infty 2( \mathcal{E}(u_{(k- 1)\tau})- \mathcal{E}(u_{(k+1) \tau }))  \right)  \\ & \leq 2T( ||u^\tau||_{L^\infty((0,T),W^{2,2}}) + 2C ) ^2 + 16D^2 \mathcal{E}(u_0). 
  \end{align*}
  We infer that $(u^\tau)_{\tau >0 }$ is bounded in $L^2((0,T), W^{3,\infty}) \cap W^{1,2}((0,T),L^2)$, which embeds by the Aubin-Lions-Lemma compactly into $L^2((0,T), C^2([0,1])$. Let $\tau_n$ be the sequence constructed in Lemma \ref{lemma:limitcase}. Then by the $L^2((0,T), W^{3,\infty}) \cap W^{1,2}((0,T),L^2)$-bound each subsequence of $\tau_n$ must have another subsequence along which $(u^{\tau_n})_{n  = 1}^\infty$ converges weakly in $L^2((0,T), W^{3,2}(0,1))$ and strongly in $L^2((0,T),C^2([0,1]))$. Because of uniqueness of weak  limits in $L^2((0,T), W^{2,2}) $ we deduce that all those subsequences must converge to the same $u$ as constructed in Lemma \ref{lemma:limitcase}. By the Urysohn property $(u^{\tau_n})_{n = 1}^\infty$ converges to $u$ strongly in $L^2((0,T),C^2([0,1]))$ and weakly in $L^2((0,T), W^{3,2}(0,1) )$. Convergence in the claimed spaces follows as $T> 0$ was arbitrary.  Choosing a further subsequence of $(\tau_n)_{n = 1}^\infty$ we may also assume that $u^{\tau_n} \rightarrow u$  pointwise almost everywhere in $C^2([0,1])$ as $L^2$-convergence of Banach-space valued functions implies the existence of a pointwise almost everywhere convergent subsequence. That $u(t)''(0)= u(t)''(1) = 0$ for almost every $t>0$ is then an immediate consequence of this fact. 
  \end{proof}
  
  So far we have shown convergence of the piecewise linear interpolations. As mentioned in Remark \ref{rem:minibound} we also need results on the behavior of the piecewise constant interpolations to control the energy. 
  \begin{lemma}[Precompactness of piecewise constant interpolation]\label{lem:discraubinlions}
  Let $u_0$ be as before. Then $(\overline{u}^\tau)_{\tau \in (0,1)} $ is precompact in $L^2((0,T),C^2([0,1]))$ for each $T>0$. 
  \end{lemma}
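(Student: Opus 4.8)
The plan is to reduce the precompactness of $(\overline{u}^\tau)_{\tau\in(0,1)}$ in $L^2((0,T),C^2([0,1]))$ to the already established precompactness of $(u^\tau)_{\tau\in(0,1)}$ in the same space (Lemma \ref{lem:a.e}, together with the $W^{3,\infty}$-bounds of Lemma \ref{lem:w3infbds}). The key observation is that the piecewise constant and piecewise linear interpolations are uniformly close, not just in $L^2$ as in Remark \ref{rem:diffinterpol}, but in stronger norms as well because of the uniform $W^{3,\infty}$-bounds.

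First I would record a uniform bound on $\|\overline{u}^\tau\|_{L^2((0,T),W^{3,\infty})}$. This follows exactly as in the proof of Lemma \ref{lem:a.e}: by Lemma \ref{lem:w3infbds}, for $t\in(k\tau,(k+1)\tau]$ one has $\overline{u}^\tau(t)=u_{(k+1)\tau}$ with $\|u_{(k+1)\tau}'''\|_{L^\infty}\leq C+D\|\dot u^\tau((k+1)\tau)\|_{L^2}$, and squaring, integrating over $(0,T)$, and using the telescoping estimate \eqref{eq:stanminimov} together with $\sum_k \frac1\tau\|u_{(k+1)\tau}-u_{k\tau}\|^2\leq 2\mathcal{E}(u_0)$ gives a bound independent of $\tau$. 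Combined with the uniform $L^\infty((0,\infty),W^{2,2})$-bound from Lemma \ref{lem:miii}, we get that $(\overline{u}^\tau)_{\tau\in(0,1)}$ is bounded in $L^2((0,T),W^{3,\infty})$, hence in $L^2((0,T),W^{3,2})$.

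Next I would estimate the difference $u^\tau-\overline{u}^\tau$ in $L^2((0,T),C^2([0,1]))$. For $t\in(k\tau,(k+1)\tau]$ we have $u^\tau(t)-\overline{u}^\tau(t)=\frac{t-(k+1)\tau}{\tau}(u_{(k+1)\tau}-u_{k\tau})$, so $\|u^\tau(t)-\overline{u}^\tau(t)\|_{W^{3,\infty}}\leq \|u_{(k+1)\tau}-u_{k\tau}\|_{W^{3,\infty}}\leq \|u_{(k+1)\tau}'''\|_{L^\infty}+\|u_{k\tau}'''\|_{L^\infty}+(\text{lower order, bounded by }W^{2,2})$. Integrating the square over $(0,T)$ and using the same telescoping argument, $\|u^\tau-\overline{u}^\tau\|_{L^2((0,T),W^{3,\infty})}$ stays bounded uniformly in $\tau$ — but that alone is not quite enough; to get smallness I instead interpolate: by the Gagliardo–Nirenberg inequality $\|w\|_{C^2}\leq c\|w\|_{W^{3,2}}^{\theta}\|w\|_{L^2}^{1-\theta}$ for a suitable $\theta\in(0,1)$, so
\begin{equation}
\|u^\tau(t)-\overline{u}^\tau(t)\|_{C^2}\leq c\,\|u^\tau(t)-\overline{u}^\tau(t)\|_{W^{3,2}}^{\theta}\,\|u^\tau(t)-\overline{u}^\tau(t)\|_{L^2}^{1-\theta}.
\end{equation}
Using Remark \ref{rem:diffinterpol}, $\|u^\tau(t)-\overline{u}^\tau(t)\|_{L^2}\leq\sqrt{2\tau}\sqrt{\mathcal{E}(u_0)}$, while the $W^{3,2}$-factor is controlled in $L^2$ in time; applying Hölder in $t$ then yields $\|u^\tau-\overline{u}^\tau\|_{L^2((0,T),C^2)}\leq C\tau^{(1-\theta)/2}\to 0$ as $\tau\to 0$.

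Finally I would conclude by a standard total-boundedness argument. Given $\varepsilon>0$, choose $\tau_0$ so small that $\|u^\tau-\overline{u}^\tau\|_{L^2((0,T),C^2)}<\varepsilon/3$ for all $\tau<\tau_0$; the finitely many $\overline{u}^\tau$ with $\tau\in[\tau_0,1)$ are trivially handled (or omitted, since precompactness concerns sequences $\tau_n\to 0$). For $\tau<\tau_0$, by Lemma \ref{lem:a.e} the family $(u^\tau)$ is precompact in $L^2((0,T),C^2)$, so it admits a finite $\varepsilon/3$-net; the same points then form an $\varepsilon$-net for $(\overline{u}^\tau)_{\tau<\tau_0}$ by the triangle inequality. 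Hence $(\overline{u}^\tau)_{\tau\in(0,1)}$ is totally bounded, and since $L^2((0,T),C^2([0,1]))$ is complete, it is precompact. The main obstacle I anticipate is the interpolation step: one must verify that the Gagliardo–Nirenberg exponent $\theta$ is strictly less than one so that the $\tau^{(1-\theta)/2}$ gain survives, and that the $W^{3,2}$-norm of the difference is genuinely controlled in $L^2((0,T))$ uniformly in $\tau$ — both of which follow from the bounds above but require care in bookkeeping the telescoping sums.
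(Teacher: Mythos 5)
Your proof is correct but takes a genuinely different route from the paper's. The paper applies the discrete Aubin--Lions--Dubinskii lemma of Dreher and J\"ungel \cite{Dreher} directly to the piecewise constant interpolation: after verifying the uniform $L^2((0,T),W^{3,\infty})$-bound on $\overline{u}^\tau$ (as you also do), it controls the discrete time difference
$\frac{1}{\tau}\|\overline{u}^\tau(\cdot-\tau)-\overline{u}^\tau(\cdot)\|_{L^1((0,T),L^2)}$
via the telescoping estimate \eqref{eq:stanminimov} and invokes the compact embedding $W^{3,\infty}\hookrightarrow C^2$. You instead transfer precompactness from the piecewise linear interpolation $u^\tau$ (which enjoys the standard Aubin--Lions compactness from Lemma \ref{lem:a.e}) by showing $u^\tau-\overline{u}^\tau\to 0$ in $L^2((0,T),C^2)$, combining the $O(\sqrt{\tau})$ bound of Remark \ref{rem:diffinterpol} with a Gagliardo--Nirenberg interpolation between $W^{3,2}$ and $L^2$; the exponent $\theta=5/6<1$ indeed works, and on $(0,1)$ one should use the additive form $\|w\|_{C^2}\leq C(\|w\|_{W^{3,2}}^{\theta}\|w\|_{L^2}^{1-\theta}+\|w\|_{L^2})$, whose extra term is also $O(\sqrt{\tau})$. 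Your route avoids the nonstandard discrete compactness lemma at the cost of the interpolation step; the paper's route is shorter once the Dreher--J\"ungel lemma is granted. Both deliver the same conclusion, and your concluding total-boundedness argument is sound for the relevant case $\tau\to 0$ (which is all the paper ever uses; strictly speaking, precompactness of the full family $(\overline{u}^\tau)_{\tau\in[\tau_0,1)}$ is not addressed by either proof, and your acknowledgment of this is fair).
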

  \begin{proof}
  The proof relies on a discrete version of the Aubin-Lions lemma -- the so-called discrete Aubin-Lions-Dubinskii lemma, see \cite[Theorem 1]{Dreher}. To apply this we just need to show that for all $T>0$ the expression 
  \begin{equation}
  \frac{1}{\tau}||\overline{u}^\tau(\cdot-\tau) - \overline{u}^\tau(\cdot) ||_{L^1((0,T),L^2(0,1))} + ||\overline{u}^\tau||_{L^2((0,T),W^{3,\infty}(0,1))} 
\end{equation}   
is uniformly bounded in $\tau$. The claim follows then since the embedding $W^{3,\infty}(0,1) \hookrightarrow C^2([0,1])$ is compact. That the second summand is uniformly bounded in $\tau$ follow from \eqref{eq:w3inf} and \eqref{eq:l2buutadot}.
 For the first summand let $N_\tau \in \mathbb{N}$ be such that $(N_\tau-1)\tau \leq T \leq N_\tau \tau$ and calculate using \eqref{eq:stanminimov}
\begin{align}
 \frac{1}{\tau}||\overline{u}^\tau(\cdot-\tau)& - \overline{u}^\tau(\cdot) ||_{L^1((0,T),L^2(0,1))}  \leq \frac{1}{\tau} \sum_{k = 0}^{N_\tau} \tau ||u_{(k+1)\tau}- u_{k\tau}||_{L^2} \\ & = \sum_{k = 0}^{N_\tau}  ||u_{(k+1)\tau}- u_{k\tau}||_{L^2}  \leq \sqrt{2\tau } \sum_{k = 0 }^{N_\tau} \sqrt{ \mathcal{E}(u_{k \tau }) - \mathcal{E}(u_{(k-1)\tau })}
 \\ &  = \sqrt{2\tau} \sqrt{N_\tau} \left( \sum_{k = 0}^{N_\tau} \mathcal{E}(u_{k \tau }) - \mathcal{E}(u_{(k-1)\tau } ) \right)^\frac{1}{2} 
  = \sqrt{2(T+ 1)} \left( \E(u_0) \right)^\frac{1}{2}.
\end{align}
Hence \cite[Theorem 1]{Dreher} is applicable and the claim follows. 
  \end{proof}
  \begin{cor}[$FVI$ gradient flow property]\label{cor:existiert}
  Let $u_0, u $ be as in Lemma \ref{lemma:limitcase}. Then $u$ is a $FVI$-Gradient Flow.
  \end{cor}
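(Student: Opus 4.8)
The plan is to verify that the limit trajectory $u$ constructed in Lemma \ref{lemma:limitcase} satisfies all four bullet points in Definition \ref{def:fvigrad}. The first three are essentially already in hand: membership $u(t) \in C_\psi$ for \emph{every} $t>0$ is the last assertion of Lemma \ref{lemma:limitcase}, the regularity $u \in L^\infty((0,\infty),W^{2,2}) \cap D^{1,2}((0,\infty),L^2)$ is also part of Lemma \ref{lemma:limitcase}, and the initial condition $u(0)=u_0$ holds because $u^{\tau_n} \to u$ in $C([0,T],C^1)$ and each $u^{\tau_n}(0)=u_0$. So the two substantial points are (a) the energy monotonicity and $\phi(0)=\E(u_0)$, and (b) the $FVI$-inequality \eqref{eq:endec}.

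For (a), I would work with the piecewise constant interpolations $\overline{u}^\tau$. By Remark \ref{rem:minibound}, $t \mapsto \E(\overline{u}^\tau(t))$ is nonincreasing and equals $\E(u_0)$ at $t=0$ for every $\tau$. By Lemma \ref{lem:discraubinlions} (and passing to a further subsequence of the $\tau_n$ from Lemma \ref{lem:a.e}), $\overline{u}^{\tau_n} \to \tilde u$ in $L^2((0,T),C^2([0,1]))$ for each $T$, hence pointwise a.e. in $C^2$ along a subsequence; and since Remark \ref{rem:diffinterpol} gives $\|u^\tau(t)-\overline{u}^\tau(t)\|_{L^2} \le \sqrt{2\tau}\sqrt{\E(u_0)} \to 0$ uniformly in $t$, the limit satisfies $\tilde u = u$ a.e. Because $\E$ is continuous in the $C^2$-topology on the relevant bounded set, $\E(\overline{u}^{\tau_n}(t)) \to \E(u(t))$ for a.e.\ $t$. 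Now define $\phi(t) := \inf_{s<t,\ s \text{ a conv. pt.}} \E(u(s))$ (or use Helly's selection theorem on the uniformly bounded monotone functions $t\mapsto\E(\overline{u}^{\tau_n}(t))$ to extract a pointwise-convergent subsequence with nonincreasing limit $\phi$); then $\phi$ is nonincreasing, $\phi = \E(u(\cdot))$ a.e., and $\phi(0)=\E(u_0)$ follows from lower semicontinuity of $\E$ together with $\E(u(t)) \le \E(u_0)$ for all $t$ (the latter from $\E(\overline{u}^{\tau_n}(t)) \le \E(u_0)$ and weak lower semicontinuity).

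For (b), I would pass to the limit in the discrete variational inequality \eqref{eq:VARDISCR}. Fix $v \in C_\psi$ and test: $(\dot u^{\tau_n}(t), v - \overline{u}^{\tau_n}(t)) + D\E(\overline{u}^{\tau_n}(t))(v - \overline{u}^{\tau_n}(t)) \ge 0$ for all $t>0$. Integrating against a nonnegative $\eta \in C_c^\infty((0,\infty))$ and passing $n\to\infty$: the term $\int \eta\, D\E(\overline{u}^{\tau_n}(t))(v-\overline{u}^{\tau_n}(t))\,dt \to \int \eta\, D\E(u(t))(v-u(t))\,dt$ using that $\overline{u}^{\tau_n} \to u$ strongly in $L^2((0,T),C^2)$ (so $\overline{u}^{\tau_n}(t) \to u(t)$ in $C^2$, hence the bilinear-type expression in \eqref{eq:eulernaiv} converges after noting the integrand is dominated, via the uniform $W^{3,\infty}$ and $W^{2,2}$ bounds). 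For the inner-product term, $u^{\tau_n} \rightharpoonup u$ weakly in $W^{1,2}((0,T),L^2)$ gives $\dot u^{\tau_n} \rightharpoonup \dot u$ weakly in $L^2((0,T),L^2)$, while $\eta(t)(v - \overline{u}^{\tau_n}(t)) \to \eta(t)(v-u(t))$ strongly in $L^2((0,T),L^2)$; the product of weak and strong convergence passes to the limit. Hence $\int_0^\infty \eta(t)\big[(\dot u(t),v-u(t)) + D\E(u(t))(v-u(t))\big]\,dt \ge 0$ for all such $\eta$, which by the fundamental lemma of the calculus of variations yields \eqref{eq:endec} for a.e.\ $t$.

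The main obstacle I expect is the justification of the convergence $D\E(\overline{u}^{\tau_n}(t))(v-\overline{u}^{\tau_n}(t)) \to D\E(u(t))(v-u(t))$ after integration in time: pointwise a.e.\ convergence in $C^2$ handles the integrand pointwise, but one needs a dominating function to invoke dominated convergence, and here the uniform $L^2((0,T),W^{3,\infty})$-bound from Lemma \ref{lem:a.e} (equivalently the $\dot u^\tau$-bound via \eqref{eq:w3inf}) combined with the uniform $W^{2,2}$-bound of Lemma \ref{lem:miii} and the estimate \eqref{eq:naivgrad} should supply an $L^1((0,T))$ bound on the integrand uniformly in $\tau$, so that a Vitali-type argument (or dominated convergence after a further subsequence) applies. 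A secondary subtlety is making sure all the subsequence extractions (Lemma \ref{lemma:limitcase}, Lemma \ref{lem:a.e}, Lemma \ref{lem:discraubinlions}) can be arranged along one common sequence $\tau_n \to 0$, which is routine by nesting.
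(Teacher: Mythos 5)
Your proposal is correct and follows essentially the same route as the paper's proof: pass to the limit in the discrete variational inequality \eqref{eq:VARDISCR} using the a.e.\ $C^2$-convergence from Lemma \ref{lem:discraubinlions}, the uniform $W^{2,2}$-bound plus \eqref{eq:naivgrad} for domination, and weak $W^{1,2}((0,T),L^2)$-convergence for the inner-product term, then invoke Helly's selection theorem for the monotone functions $t\mapsto\E(\overline{u}^{\tau_n}(t))$. The only cosmetic differences are that the paper localizes via integration over arbitrary $[a,b]$ and a Lebesgue-point argument rather than testing against $\eta\in C_c^\infty$, and that $\phi(0)=\E(u_0)$ is obtained there directly from $\overline{u}^{\tau_n}(0)=u_0$ (the Helly route you also mention), which is cleaner than the lower-semicontinuity argument you sketch first.
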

  \begin{proof}
  The fact that $u(t) \in C_\psi$ for almost every $t > 0 $ follows from the fact that $C_\psi$ is weakly closed in $W^{2,2}(0,1)$.  For the proof of the $FVI$ inequality we choose $v \in C_\psi$.
  Let $(u^{\tau_n})_{n = 1}^\infty$ be a sequence chosen as in Lemma \ref{lem:a.e}. Further let $0<a<b$ be arbitrary.  Integrating \eqref{eq:VARDISCR} we find that 
\begin{equation}
\int_a^b ( \dot{u}^{\tau_n}(t) , v- \overline{u}^{\tau_n}(t) ) \dt + \int_a^b D\mathcal{E}(\overline{u}^{\tau_n}(t)) (v- \overline{u}^{\tau_n}(t)) \dt  \geq 0 .
\end{equation}  
  Since $u^{\tau_n}$ converges to $u$ uniformly in $L^2(0,1)$ we can infer from Remark \ref{rem:diffinterpol} that for each $t > 0$ $\overline{u}^{\tau_n}(t)$ converges to $u(t)$ in $L^2(0,1)$. We also infer from Lemma \ref{lem:discraubinlions} that -- after choosing an approprate subsequence of $(\tau_n)_{n = 1}^\infty$ again with a straightforward diagonal argument -- we can ensure that for almost every $t > 0 $ the sequence $(\overline{u}^{\tau_n}(t))_{n = 1}^\infty$ converges to $u(t)$ in $C^2([0,1])$. From this one immediately concludes that 
  \begin{equation}
  D\mathcal{E}(\overline{u}^{\tau_n}(t))( v- \overline{u}^{\tau_n}(t)) \rightarrow D\mathcal{E}(u(t)) (v-u(t)) \quad a.e. \;  t > 0. 
  \end{equation}
  Moreover $(|D\mathcal{E}(\overline{u}^{\tau_n}(t))( v- \overline{u}^{\tau_n}(t))|)_{n = 1}^\infty$ can be dominated uniformly in $n$ by observing that by \eqref{eq:naivgrad}
 \begin{align}
| D\mathcal{E}(\overline{u}^{\tau_n}(t))( v- \overline{u}^{\tau_n}(t))| & \leq (2 ||\overline{u}^{\tau_n}(t) ||_{W^{2,2}} + 5 ||u^{\tau_n}(t)||_{W^{2,2}}^2 ) ||v- u^{\tau_n}(t)||_{W^{2,2}}^2\\ & \leq C( 1 + ||u^{\tau_n}||_{L^\infty((0,T),W^{2,2})}^3)  
 \end{align}
 which is unformly bounded by a constant because of Lemma \ref{lem:miii}. By dominated convergence we infer
 \begin{equation}\label{eq:convD}
\lim_{n\rightarrow \infty} \int_a^b D\mathcal{E}(\overline{u}^{\tau_n}(t)) (v- \overline{u}^{\tau_n}(t)) \dt  =  \int_a^b D\E(u(t)) (v-u(t)) \dt . 
 \end{equation}
 Also observe that 
 \begin{align*}
 \int_a^b ( \dot{u}^{\tau_n}(t) , v - \overline{u}^{\tau_n}(t) )  \dt & = \int_a^b (\dot{u}^{\tau_n} (t) , v - u(t)) \dt + \int_a^b (\dot{u}^{\tau_n}(t), \overline{u}^{\tau_n}(t) - u(t)) \dt 
 \end{align*}
 and note that by weak convergence of $(u^{\tau_n})_{n = 1}^\infty $ in $W^{1,2}((0,T),L^2(0,1))$ we have
 \begin{equation}\label{eq:limiueberg}
 \int_a^b (\dot{u}^{\tau_n} (t) , v - u(t)) \dt \rightarrow \int_a^b (\dot{u}(t) , v - u(t)) \dt \quad ( n \rightarrow \infty) . 
 \end{equation}
 Moreover
 \begin{equation}
  \left\vert \int_a^b (\dot{u}^{\tau_n}(t), u^{\tau_n}(t) - u(t)) \dt \right\vert \leq ||u^{\tau_n}||_{W^{1,2}((0,b),L^2(0,1))} ||u^{\tau_n} - u||_{L^2((0,b),L^2(0,1))}
 \end{equation}
which tends to zero as $n \rightarrow \infty$. This and \eqref{eq:limiueberg} together imply that 
 \begin{equation}
 \int_a^b ( \dot{u}^{\tau_n}(t) , v - u^{\tau_n}(t) ) \dt \rightarrow \int_a^b ( \dot{u}(t) , v - u(t) ) \dt
 \end{equation}
 and  together with \eqref{eq:VARDISCR} and \eqref{eq:convD} we find 
 \begin{align}
 0 & \leq \lim_{n\rightarrow \infty} \left( \int_a^b ( \dot{u}^{\tau_n}(t) , v- \overline{u}^{\tau_n}(t) ) \dt + \int_a^b D\mathcal{E}(\overline{u}^{\tau_n}(t)) (v- \overline{u}^{\tau_n}(t)) \dt\right) 
 \\ & = \int_a^b ( \dot{u}(t) , v-u(t) ) + D\mathcal{E}(u(t))( v-u(t)) \; \mathrm{d}t .
 \end{align}
 Since $a,b$ are arbitrary and the integrand lies in $L^1_{loc}((0,\infty))$ we infer that at each Lebesgue point $t$ of the integrand one has 
 \begin{equation}
 ( \dot{u}(t) , v-u(t) ) + D\mathcal{E}(u(t))( v-u(t)) \geq 0 .
 \end{equation}
 This shows \eqref{eq:endec}. It remains to show that $t \mapsto \mathcal{E}(u(t)) $ coincides almost everywhere with a nonincreasing function $f$ that satisfies $f(0)= \E(u_0)$. By Remark \ref{rem:minibound} $t \mapsto \mathcal{E}(\overline{u}^{\tau_n}(t))$  is nonincreasing for each $\tau > 0$. By Helly's theorem (cf. \cite[Lemma 3.3.3]{Ambrosio}) this sequence of functions has a pointwise limit, which is a nonincreasing function, call it $f$. We have already shown in Lemma \ref{lem:discraubinlions} that $\overline{u}^{\tau_n}(t)$ converges to $u(t)$ in $C^2([0,1])$ for almost every $t> 0$ so that $\mathcal{E}(\overline{u}^{\tau_n}(t))$ converges to $\mathcal{E}(u(t))$ pointwise almost everywhere. Hence $t \mapsto \mathcal{E}(u(t))$ coincides almost everywhere with $f$.   
  \end{proof}
  \begin{remark}\label{rem:weakconconin}
  A useful byproduct of this approach is that also $\overline{u}^{\tau_n}(t) \rightarrow u(t)$ in $L^2(0,1)$ for all $t> 0$ (and not just almost everywhere). More can be said: Boundedness of $(\overline{u}^{\tau_n}(t))_{n = 1}^\infty$ in $W^{2,2}(0,1)$ (cf. \eqref{eq:416}) implies that $ \overline{u}^{\tau_n}(t) \rightharpoonup u(t)$ weakly in $W^{2,2}(0,1)$ for all $t > 0$. 
  \end{remark}
  Finally, we have constructed an $FVI$ gradient flow. Before we can prove Theorem \ref{thm:existence} we need to discuss some further properties of the constructed flow.
  \subsection{Space regularity and Navier boundary conditions}
  The minimizing movement construction in the first part of this section is a highly nonunique concept. In general Theorem \ref{thm:existence} asserts however some additional regularity properties that hold true for every possible choice of a $FVI$ gradient flow starting at $u_0$. To show this, we will not use the above construction and work directly with the definition instead.
\begin{lemma}[Weak $W^{2,2}$-continuity in time]\label{lem:weakcon}
Let $u_0 \in C_\psi$ be such that $\mathcal{E}(u_0) < \frac{c_0^2}{4}$ and let $u$ be a $FVI$-Gradient Flow. Then $u(t) \in C_\psi$ for all $t >0 $ and for each sequence $t_n \rightarrow t$ one has $u(t_n) \rightarrow u(t)$ weakly in $W^{2,2}(0,1)$. In particular $t \mapsto u(t)$ is a bounded curve in $W^{2,2}(0,1)$. 
\end{lemma}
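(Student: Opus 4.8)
The plan is to establish the two claimed facts in turn: first that $u(t)\in C_\psi$ for \emph{every} $t>0$ (not merely a.e.), and then the weak $W^{2,2}$-continuity at every $t>0$. For the first point, recall that by Definition \ref{def:fvigrad} $u$ identified with its $C([0,\infty),C^1([0,1]))$-representative, and $u(t)\in C_\psi$ holds for a.e.\ $t$. Fix an arbitrary $t_0>0$ and pick a sequence $t_n\to t_0$ with $u(t_n)\in C_\psi$ for all $n$. Since $u(t_n)\to u(t_0)$ in $C^1([0,1])$ (in particular uniformly), the pointwise inequality $u(t_n)\geq\psi$ passes to the limit, so $u(t_0)\geq\psi$ and hence $u(t_0)\in C_\psi$; the boundary conditions $u(t_0)(0)=u(t_0)(1)=0$ also survive the $C^1$-limit. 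This also shows $u(t_0)\in W^{2,2}(0,1)$: indeed by the $L^\infty((0,\infty),W^{2,2})$-bound from Definition \ref{def:fvigrad} we have $\|u(t)\|_{W^{2,2}}\leq K$ for a.e.\ $t$, and a sequence of such $t_n\to t_0$ gives $u(t_n)$ bounded in $W^{2,2}$, so a subsequence converges weakly in $W^{2,2}$; by uniqueness of limits in $C^1$ the weak limit is $u(t_0)$, whence $\|u(t_0)\|_{W^{2,2}}\leq K$ by weak lower semicontinuity.

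For the weak continuity, let $t_n\to t$ be arbitrary. By the uniform $W^{2,2}$-bound just established, $(u(t_n))_n$ is bounded in $W^{2,2}(0,1)$, so every subsequence has a further subsequence converging weakly in $W^{2,2}(0,1)$ to some limit $w$. On the other hand, $u(t_n)\to u(t)$ strongly in $C^1([0,1])$ by the continuity of the $C([0,\infty),C^1([0,1]))$-representative, so along that subsequence $u(t_n)\to u(t)$ also in $C^1$, which forces $w=u(t)$ (the weak $W^{2,2}$-limit and the $C^1$-limit must agree, since weak $W^{2,2}$-convergence implies e.g.\ weak-$*$ convergence of the second derivatives and in particular distributional convergence, and the $C^1$-limit pins down the function itself). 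Hence every subsequence of $(u(t_n))_n$ has a further subsequence converging weakly in $W^{2,2}$ to the \emph{same} limit $u(t)$, and by the Urysohn property of weak convergence the whole sequence satisfies $u(t_n)\rightharpoonup u(t)$ in $W^{2,2}(0,1)$.

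Finally, boundedness of the curve $t\mapsto u(t)$ in $W^{2,2}(0,1)$ on all of $(0,\infty)$ is immediate from the bound $\|u(t)\|_{W^{2,2}}\leq K$ for every $t>0$ derived in the first step (valid for all $t$, not just a.e.\ $t$, precisely because of the weak-limit argument there).

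I do not expect any serious obstacle here; the only mild subtlety is the bookkeeping needed to upgrade ``$u(t)\in C_\psi$ a.e.'' and ``$\|u(t)\|_{W^{2,2}}\leq K$ a.e.'' to statements holding for \emph{every} $t$, which is handled uniformly by the $C^1$-continuity of the chosen representative together with weak compactness in $W^{2,2}$ and uniqueness of limits. One should be slightly careful that the $C([0,\infty),C^1([0,1]))$-representative is genuinely the one being referenced (as fixed in the remark after Definition \ref{def:fvigrad}), so that evaluations $u(t)$ are unambiguous.
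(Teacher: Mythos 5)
Your proof is correct and follows essentially the same approach as the paper: use the $C([0,\infty),C^1([0,1]))$-representative to approximate $t$ by times $s_n$ at which the a.e.\ properties (membership in $C_\psi$ and the $L^\infty$-in-time $W^{2,2}$-bound) hold, extract a weak $W^{2,2}$-limit, identify it with $u(t)$ via the $C^1$-limit and the Urysohn property, and then repeat the Urysohn argument for an arbitrary sequence $t_n\to t$. The only cosmetic wrinkle is that you assert ``hence $u(t_0)\in C_\psi$'' from $u(t_0)\geq\psi$ before you have established $u(t_0)\in W^{2,2}$ (which is required by the definition of $C_\psi$); the paper avoids this by first getting weak $W^{2,2}$-convergence and then invoking weak closedness of $C_\psi$, but your next sentence supplies the missing $W^{2,2}$-membership, so the argument as a whole is fine.
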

\begin{proof}
Let $t > 0$ be arbitrary and $u$ be as in the statement. Recall that we always identify $u$ with its $C([0,\infty),C^1([0,1])$-representative. By Definition \ref{def:fvigrad} there exists $s_n \rightarrow t$ such that $u(s_n) \in C_\psi$  and $||u(s_n)||_{W^{2,2}} \leq  ||u||_{L^\infty((0, \infty), W^{2,2})}$ for all $n \in \mathbb{N}$. We know that each subsequence of $(u(s_n))_{n = 1}^\infty$ has a subsequence that converges weakly in $W^{2,2}(0,1)$. As $u(s_n) \rightarrow u(t)$ in $C^1([0,1])$ we infer by the Urysohn property that $u(s_n) \rightharpoonup u(t)$ weakly in $W^{2,2}(0,1)$. It follows that $u(t) \in C_\psi$ and 
\begin{equation}\label{eq:allbound}
||u(t)||_{W^{2,2}} \leq \liminf_{n \rightarrow \infty} ||u(s_n)||_{W^{2,2}} \leq ||u||_{L^\infty((0, \infty), W^{2,2})} .
\end{equation}
Now let $t_n \rightarrow t$ be an arbitrary sequence.
By the choice of the representative we know that $u(t_n) \rightarrow u(t)$ in $C^1([0,1])$. By \eqref{eq:allbound}, $(u(t_n))_{n = 1}^\infty \subset W^{2,2}(0,1)$ is a bounded sequence. Therefore each subsequence has a weakly convergent subsequence in $W^{2,2}(0,1)$. Because of uniqueness of limits in $W^{1,\infty}(0,1)$ all those sequences converge weakly to $u(t)$. Again the Urysohn property yields that $u(t_n)$ converges weakly to $u(t)$ in $W^{2,2}(0,1)$. 
\end{proof}

\begin{lemma}[Space regularity and Navier boundary conditions]\label{lem:exreg}
Let $u_0 \in C_\psi$ be such that $\E(u_0) < \frac{c_0^2}{4}$. Let $u$ be any $FVI$ gradient flow starting at $u_0$. Then for almost every $t > 0$ one has that $u(t) \in W^{3,\infty} (0,1)$ and $u(t)''(0) = u(t)''(1) = 0$
\end{lemma}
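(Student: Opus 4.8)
The plan is to derive the Euler--Lagrange-type information directly from the $FVI$-inequality \eqref{eq:endec}, using the fact that $\dot u(t) \in L^2(0,1)$ for almost every $t>0$ (this is built into Definition \ref{def:fvigrad}, since $\dot u \in L^2((0,\infty),L^2(0,1))$), together with the $W^{2,2}$-bound from Lemma \ref{lem:weakcon}. Fix such a good time $t>0$, write $w := u(t) \in C_\psi$ and $g := \dot u(t) \in L^2(0,1)$, and set $K := \E(u_0) < \tfrac{c_0^2}{4}$, so $\|w\|_{W^{2,2}} \le C(K)$ by \eqref{eq:allbound} and $\|w'\|_\infty < G^{-1}(\sqrt{K})$ by Remark \ref{ref:remeng}. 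The $FVI$ at time $t$ reads $(g,v-w) + D\E(w)(v-w) \ge 0$ for all $v \in C_\psi$. Since $w > \psi$ near the endpoints $0$ and $1$ (because $\psi(0),\psi(1)<0=w(0)=w(1)$ and both are continuous), one can insert test functions $v = w + \epsilon\phi$ with $\phi \in C_0^\infty(0,1)$ and later a few $\phi$ supported near the endpoints; the standard one-sided-variation argument (as in the proof of Proposition \ref{prop:concistncy}, but now only for $\phi \ge 0$, i.e. variations pointing into $C_\psi$) yields a sign condition, while for $\phi$ supported away from the coincidence set it yields the \emph{equality} $D\E(w)(\phi) = -(g,\phi)$.

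The key step is to upgrade this into pointwise regularity. Using the naive form \eqref{eq:eulernaiv} of $D\E$, the identity $D\E(w)(\phi) = -(g,\phi)$ for $\phi \in C_0^\infty(0,1)$ says that, in the distributional sense,
\begin{equation}
\Bigl( \frac{w''}{(1+w'^2)^{5/2}} \Bigr)'' \;=\; \Bigl( \frac{5}{2}\cdot \frac{w''^2 w'}{(1+w'^2)^{7/2}} \Bigr)' \;-\; \tfrac12 g
\end{equation}
on each interval where $w > \psi$. I would then perform a bootstrap: the right-hand side, integrated twice, lies in $W^{1,2} \hookrightarrow C^{0,1/2}$ plus a $W^{2,2}$-term coming from $g$ integrated twice; hence $\tfrac{w''}{(1+w'^2)^{5/2}} \in W^{2,1}$ locally, in particular its first derivative is bounded, which (since $w' \in C^{0,1/2}$ and $(1+w'^2)^{-5/2}$ is bounded below) forces $w'' \in W^{1,\infty}$, i.e. $w \in W^{3,\infty}$, locally on $\{w>\psi\}^\circ$. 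Because the coincidence set can be uncomfortable, the cleaner route is to first establish the Navier boundary condition and $W^{3,\infty}$-regularity \emph{globally} by comparison: Lemma \ref{lem:w3infbds} already gives that the discrete approximants $\overline u^{\tau_n}(t)$ satisfy $(u^{\tau_n}(t))''(0)=(u^{\tau_n}(t))''(1)=0$ and a uniform $W^{3,\infty}$-bound $\|(\overline u^{\tau_n}(t))'''\|_\infty \le C + D\|\dot u^{\tau_n}(t)\|_{L^2}$; but that construction is nonunique, so instead I would re-run its argument intrinsically. Namely, from $D\E(w)(\phi) = -(g,\phi)$ valid for all $\phi \in W^{2,2}\cap W_0^{1,2}$ supported away from $\{w=\psi\}$, and the fact that $\{w=\psi\}$ is a closed set disjoint from the endpoints, one localizes near $0$ and near $1$: there $w$ solves the linear-looking fourth-order equation above with $L^2$ right-hand side and no constraint, so elliptic regularity on the interval $(0,\delta)$ gives $w \in W^{4,2}(0,\delta) \hookrightarrow C^3$, and integrating \eqref{eq:euler}-style by parts against $\phi$ with $\phi'(0)\ne 0$ extracts precisely the boundary term $\tfrac{w''(0)\phi'(0)}{(1+w'(0)^2)^{5/2}}$, which must vanish, giving $w''(0)=0$; symmetrically $w''(1)=0$.

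For the global $W^{3,\infty}$ claim I would argue as follows. Having the two equations — the distributional PDE on $\{w>\psi\}^\circ$ and a one-sided inequality across $\{w=\psi\}$ — one writes $\bigl(\tfrac{w''}{(1+w'^2)^{5/2}}\bigr)'' - \bigl(\tfrac52 \tfrac{w''^2w'}{(1+w'^2)^{7/2}}\bigr)' = -\tfrac12 g + \mu$ where $\mu$ is a nonnegative measure supported on $\{w=\psi\}$; testing with a suitable nonnegative $\phi$ and using $\psi \in C([0,1])$ together with the energy bound (which controls $\|w''\|_{L^2}$) bounds the total mass of $\mu$. Then $\tfrac{w''}{(1+w'^2)^{5/2}} = h'$ for some $h$ of bounded variation, hence $w'' \in BV \hookrightarrow L^\infty$, and combined with $w' \in W^{1,2}$ this yields $w \in W^{2,\infty}$; one final bootstrap using the $L^2$ right-hand side and the boundedness just obtained pushes $w''$ into $W^{1,1}$ on compact subintervals and, away from the (measure-zero) singular support of $\mu$, into $W^{1,\infty}$, which is enough to conclude $w \in W^{3,\infty}(0,1)$ after checking that $\mu$ contributes no jump obstruction to the $L^\infty$-bound on $w'''$. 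The main obstacle I anticipate is precisely this handling of the free-boundary measure $\mu$: away from $\{w=\psi\}$ everything is classical elliptic bootstrapping, but one must verify that the nonnegativity of $\mu$ plus the $C^0$-regularity of $\psi$ genuinely prevents $w''$ from developing a jump — this is where the "small obstacle" hypothesis (through the $W^{2,2}$ and $\|w'\|_\infty$ bounds) does the real work, keeping the nonlinear coefficients $(1+w'^2)^{-5/2}$ uniformly elliptic so that the comparison with the biharmonic situation, and hence with Lemma \ref{lem:w3infbds}, goes through for every $FVI$ gradient flow and not merely for the constructed one.
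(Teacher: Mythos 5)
Your proposal takes essentially the same route as the paper. The paper's proof of this lemma is little more than a pointer to the argument of Lemma \ref{lem:w3infbds}: at a good time $t$ (where the $FVI$ holds and $\dot u(t)\in L^2$) one extracts a nonnegative Radon measure $\mu$ supported on $\{u(t)=\psi\}$, bounds its total mass using a cutoff test function together with the uniform energy controls and the fact that $\{u(t)=\psi\}$ stays a definite distance from the endpoints, introduces $m(r):=\mu((r,1))$ to turn the equation into an explicit first-order identity for $\bigl(\tfrac{u(t)''}{(1+u(t)'^2)^{5/2}}\bigr)'$, and bootstraps to $u(t)''\in L^\infty$ and then $u(t)'''\in L^\infty$; the Navier condition comes from the unconstrained equation near the endpoints (cf.\ the reference to \cite[Corollary 3.3]{Anna}). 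Two small corrections to your sketch, neither affecting the conclusion: the $\tfrac52$-term in your distributional PDE should enter with the opposite sign (the identity is $2\bigl(\tfrac{w''}{(1+w'^2)^{5/2}}\bigr)''+5\bigl(\tfrac{w''^2 w'}{(1+w'^2)^{7/2}}\bigr)'+g=\mu$); and your concern that $\mu$ might create a ``jump obstruction'' dissolves once one observes that $m$ is merely bounded, not continuous, so $u(t)'''$ is allowed to be discontinuous --- $W^{3,\infty}$ asks only for an $L^\infty$ bound, which is precisely what the argument delivers.
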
 
\begin{proof} Since the proof is very similar to the proof of Lemma \ref{lem:w3infbds} we only mention some important steps.
Let $t> 0$ be such that $FVI$ holds true. 
Similar to the proof of Lemma \ref{lem:w3infbds} one can infer from $FVI$ that there exists a Radon measure $\mu$ on $(0,1)$ such that for all $\phi \in C_0^\infty(0,1)$. 
\begin{equation}
\int_0^1 \dot{u}(t) \phi \dx + D\E(u(t))( \phi) = \int_0^1 \phi \; \mathrm{d} \mu 
\end{equation}
and for all $\phi \in W^{2,2}(0,1) \cap W_0^{1,2}(0,1)$ such that $\mathrm{supp}(\phi)$ is compactly contained in $\{ u > \psi\}$
\begin{equation}
\int_0^1 \dot{u}(t) \phi \dx + D\E(u(t))( \phi) = 0.
\end{equation}
Proceeding similar to the proof of Lemma \ref{lem:w3infbds} we can derive the claimed regularity and the Navier boundary conditions.  
\end{proof}

\begin{proof}[Proof of Theorem \ref{thm:existence}]
Existence of $u$ follows from Corollary \ref{cor:existiert}. That $t \mapsto u(t) \in W^{2,2}(0,1)$ is everywhere defined and bounded follows from the last sentence of Lemma \ref{lem:weakcon}. The additional space regularity and the Navier boundary conditions follow from Lemma \ref{lem:exreg}. 
\end{proof}

\subsection{Energy dissipation}

In the rest of this section we will prove an \emph{energy dissipation inequality}. This shows that energy is dissipated in $(0,T)$ is comparable to $||\dot{u}||_{L^2((0,T),L^2(0,1))}^2$, which is what one would expect for a gradient flow. The speed of energy dissipation we obtain might however be worse than in the usual formulation of metric gradient flows. The expected dissipation speed can be described by De Giorgi's energy dissipation identity, cf. \cite[Section 2.3]{Marius2}. How much worse the FVI gradient flow performs depends highly on the quantity $|\partial^- \E|$ from \cite[Equation (2.3.1)]{Ambrosio}, cf. \cite[Theorem 2.3.3]{Ambrosio}. 

\begin{lemma}[An energy dissipation inequality]
Let $u_0 \in C_\psi$ be such that $\E(u_0) < \frac{c_0^2}{4}$ 
Let $u$ be an $FVI$ gradient flow starting at $u_0 $ which was constructed as in the Proof of Theorem \ref{thm:existence}. Then for each $T>0$ one has 
\begin{equation}
\E(u(T)) + \frac{1}{2}\int_0^T ||\dot{u}(t)||_{L^2}^2 \dt \leq \E(u_0) .  
\end{equation}
\end{lemma}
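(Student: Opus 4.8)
The plan is to exploit the discrete energy estimate already built into the minimizing movement scheme and pass to the limit, using the lower semicontinuity properties established in Lemmas \ref{lemma:limitcase}--\ref{lem:a.e}. Recall from Remark \ref{rem:interpoli} that the piecewise linear interpolations satisfy $\int_0^\infty \|\dot u^\tau(t)\|_{L^2}^2\,\mathrm{d}t \le 2\mathcal{E}(u_0)$, and more precisely, summing the telescoping inequality \eqref{eq:stanminimov}, for every $T>0$ and every $N_\tau$ with $N_\tau\tau \ge T$ one gets
\begin{equation}
\sum_{k=0}^{N_\tau} \frac{1}{\tau}\|u_{(k+1)\tau}-u_{k\tau}\|_{L^2}^2 \le 2\bigl(\mathcal{E}(u_0) - \mathcal{E}(u_{(N_\tau+1)\tau})\bigr) = 2\mathcal{E}(u_0) - 2\mathcal{E}(\overline u^\tau(N_\tau\tau)).
\end{equation}
Rewriting the left-hand side as $\int_0^{N_\tau\tau}\|\dot u^\tau(t)\|_{L^2}^2\,\mathrm{d}t$, this is exactly a discrete energy dissipation identity: $\mathcal{E}(\overline u^\tau(N_\tau\tau)) + \tfrac12\int_0^{N_\tau\tau}\|\dot u^\tau\|_{L^2}^2 \le \mathcal{E}(u_0)$.

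Next I would fix $T>0$ and pass to the limit along the sequence $\tau_n\to 0$ from Lemma \ref{lem:a.e}. Choosing $N_{\tau_n}$ with $N_{\tau_n}\tau_n \to T$ (say the smallest such integer), the left endpoint of the discrete dissipation integral tends to $T$ and hence $\int_0^{N_{\tau_n}\tau_n}\|\dot u^{\tau_n}\|_{L^2}^2\,\mathrm{d}t \to \liminf$-bounds against $\int_0^T\|\dot u^{\tau_n}\|_{L^2}^2\,\mathrm{d}t$ up to the negligible contribution near $T$; since $u^{\tau_n}\rightharpoonup u$ weakly in $W^{1,2}((0,T),L^2(0,1))$ by Lemma \ref{lemma:limitcase}, weak lower semicontinuity of the $L^2$-norm of the time derivative gives $\int_0^T\|\dot u(t)\|_{L^2}^2\,\mathrm{d}t \le \liminf_{n\to\infty}\int_0^T\|\dot u^{\tau_n}(t)\|_{L^2}^2\,\mathrm{d}t$. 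For the energy term at time $T$, I would use that $\overline u^{\tau_n}(N_{\tau_n}\tau_n) \rightharpoonup u(T)$ weakly in $W^{2,2}(0,1)$ — this follows as in Remark \ref{rem:weakconconin} together with the uniform $C^1$-convergence and the uniform $W^{2,2}$-bound of Lemma \ref{lem:miii}, noting $N_{\tau_n}\tau_n\to T$ and the $C^{0,1/2}$-in-$L^2$ equicontinuity from Proposition \ref{prop:holder}/Remark \ref{rem:diffinterpol}. Then weak lower semicontinuity of $\mathcal{E}$ on bounded subsets of $W^{2,2}$ (the integrand $u''^2/(1+u'^2)^{5/2}$ is convex in $u''$ for fixed $u'$, and $u'$ converges uniformly, so $\mathcal{E}$ is weakly lower semicontinuous along sequences with uniformly convergent derivatives) yields $\mathcal{E}(u(T)) \le \liminf_{n\to\infty}\mathcal{E}(\overline u^{\tau_n}(N_{\tau_n}\tau_n))$. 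Combining the two liminf inequalities with the discrete identity gives the claim.

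The main obstacle I anticipate is a bookkeeping one: the discrete dissipation identity naturally lives on $[0, N_\tau\tau]$, not exactly $[0,T]$, so I must justify that replacing $N_\tau\tau$ by $T$ costs nothing in the limit. This is handled by the observation that $|N_{\tau_n}\tau_n - T| \le \tau_n \to 0$, that $\int_{T}^{N_{\tau_n}\tau_n}\|\dot u^{\tau_n}\|_{L^2}^2\,\mathrm{d}t \ge 0$ so dropping it only helps the inequality, and that the energy evaluated at $N_{\tau_n}\tau_n$ versus at $T$ differ negligibly because $\overline u^{\tau_n}(N_{\tau_n}\tau_n)$ and $\overline u^{\tau_n}(T)$ are within $O(\sqrt{\tau_n})$ in $L^2$ (Remark \ref{rem:diffinterpol} and the Hölder estimate), both weakly subconverging to $u(T)$ in $W^{2,2}$. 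A secondary subtlety is that $\mathcal{E}(u(T))$ should be read via the nonincreasing representative $\phi$ from Definition \ref{def:fvigrad}; since $u$ was constructed as in Corollary \ref{cor:existiert}, $\phi$ is the pointwise Helly limit of $\mathcal{E}(\overline u^{\tau_n}(\cdot))$ and agrees with $\mathcal{E}(u(\cdot))$ off a null set, and the weak-$W^{2,2}$ lower semicontinuity argument above identifies $\mathcal{E}(u(T)) \le \phi(T)$ at every $T$, so the stated inequality holds for the chosen representative at every $T>0$.
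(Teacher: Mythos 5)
Your proposal is correct and follows essentially the same route as the paper's proof: both telescope the minimizing-movement estimate \eqref{eq:stanminimov} to obtain a discrete dissipation bound, then pass to the limit in $\tau_n\to 0$ by combining weak lower semicontinuity of $\|\dot u^{\tau_n}\|_{L^2((0,T),L^2)}$ (from Lemma \ref{lemma:limitcase}) with weak $W^{2,2}$-lower semicontinuity of $\E$ at the terminal time (via Remark \ref{rem:weakconconin}). The bookkeeping you flag around $N_\tau\tau$ vs.\ $T$ is in fact slightly simpler than you anticipate, since $\overline u^{\tau_n}(N_{\tau_n}\tau_n)=\overline u^{\tau_n}(T)$ by the definition of the piecewise constant interpolation, and $\int_T^{N_{\tau_n}\tau_n}\|\dot u^{\tau_n}\|^2\,\mathrm{d}t\ge 0$ can simply be dropped.
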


\begin{proof}
Let $(u^{\tau_n})_{n = 1}^\infty$ be the sequence from Lemma \ref{lem:a.e}. For $n \in \mathbb{N}$ we define $k_n \in \mathbb{N}_0$ to be the unique integer such that $k_n \tau_n \leq T \leq (k_n + 1) \tau_n$.  By weak convergence of $u^{\tau_n}$ in $W^{1,2}((0,T),L^2(0,1))$ (cf. Lemma \ref{lemma:limitcase}) and weak $W^{2,2}-$convergence of $\overline{u}^{\tau_n}(T)$ to $u(T)$ (cf. Remark \ref{rem:weakconconin}) we obtain with \eqref{eq:stanminimov}
\begin{align}
\E(u(T)) + & \frac{1}{2} \int_0^T ||\dot{u}(t)||_{L^2}^2 \dt  \leq \liminf_{n  \rightarrow \infty} \left( \E(\overline{u}^{\tau_n}(T)) + \frac{1}{2} \int_0^T ||\dot{u}^{\tau_n}(t)||^2 \dt \right) 
\\ & \leq \liminf_{n \rightarrow \infty } \left( \E(u_{(k_n+1) \tau_n}) + \frac{1}{2}\int_0^{(k_n+1) \tau_n} || \dot{u}^{\tau_n}(t) ||^2 \dt \right) 
\\ & \leq \liminf_{n \rightarrow \infty } \left( \E(u_{(k_n+1) \tau_n}) + \sum_{l =0}^{k_n} \frac{||u_{(l+1)\tau_n} - u_{l\tau_n}||_{L^2}^2}{2\tau} \right) 
\\ & \leq \liminf_{n \rightarrow \infty } \left( \E(u_{(k_n+1) \tau_n}) + \sum_{l =0}^{k_n} (\E(u_l)- \E(u_{l+1}))  \right) = \E(u_0).  \qedhere
\end{align}
\end{proof}

  \subsection{Uniqueness and preservation of symmetry}
  
Now that we have shown existence of $FVI$ gradient flows one can ask whether they are unique. This uniqueness has been obtained in \cite[Section 3]{Yoshizawa}. It has an important consequence for our later studies of the asymptotics --- namely that evolutions are symmetry preserving, as we shall show. 



\begin{prop}[{Uniqueness, cf. \cite[Theorem 3.2]{Yoshizawa}}] 
Suppose that $u_0\in C_\psi$ is such that $\mathcal{E}(u_0)< \frac{c_0^2}{4}$. 
Then the $FVI$ gradient flow starting at $u_0$ is unique. 
\end{prop}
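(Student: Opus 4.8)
The natural approach is a Gronwall estimate on the $L^2$-distance of two solutions, exactly as for second-order parabolic obstacle problems, with the quasilinearity of $D\E$ kept under control by the uniform $W^{2,2}$-bound from Theorem \ref{thm:existence}. So suppose $u$ and $\tilde u$ are both $FVI$ gradient flows starting at $u_0$, and set $w:=u-\tilde u$. Then $w(0)=0$ (in the $C^1([0,1])$-representative), and by Theorem \ref{thm:existence} there is $M=M(u_0)<\infty$ with $\|u(t)\|_{W^{2,2}},\|\tilde u(t)\|_{W^{2,2}}\le M$ and hence $\|u(t)'\|_\infty,\|\tilde u(t)'\|_\infty\le M$ for all $t>0$. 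For a.e. $t>0$ the inequality \eqref{eq:endec} holds for both $u(t)$ and $\tilde u(t)$; since by Lemma \ref{lem:weakcon} we have $u(t),\tilde u(t)\in C_\psi$ for all $t$, I would test the $FVI$ for $u(t)$ with $v=\tilde u(t)$ and the one for $\tilde u(t)$ with $v=u(t)$, and add. Using that $t\mapsto\|w(t)\|_{L^2}^2$ is absolutely continuous on compact subintervals with derivative $2(\dot w(t),w(t))$ (standard for functions in $D^{1,2}((0,\infty),L^2(0,1))$ with continuous $L^2$-representative), this yields, for a.e.\ $t$,
\begin{equation*}
\tfrac12\,\tfrac{\mathrm d}{\mathrm dt}\|w(t)\|_{L^2}^2\ \le\ -\big(D\E(u(t))-D\E(\tilde u(t))\big)(w(t)).
\end{equation*}

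The heart of the matter is to show that the right-hand side is bounded by $-\tfrac{c_1}{2}\|w''\|_{L^2}^2+C\|w\|_{L^2}^2$ with constants depending only on $M$. Writing out the difference via the explicit formula \eqref{eq:eulernaiv}, the decisive observation is that the second-order part produces a good sign: after adding and subtracting one gets the coercive term
\begin{equation*}
-2\int_0^1\frac{(w'')^2}{(1+(u')^2)^{5/2}}\dx\ \le\ -\frac{2}{(1+M^2)^{5/2}}\,\|w''\|_{L^2}^2\ =:\ -c_1\|w''\|_{L^2}^2,
\end{equation*}
while every remaining term is an integral containing \emph{at most one} factor of $w''$, the other factors being bounded $L^2$-norms of $u''$, $\tilde u''$ and differences of the smooth, bounded functions $p\mapsto(1+p^2)^{-5/2}$ and $p\mapsto p(1+p^2)^{-7/2}$ evaluated at $u'$ versus $\tilde u'$, which by the mean value theorem are pointwise $\le L\|w'\|_\infty$. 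Hence all these error terms are bounded by $C(M)\big(\|w'\|_\infty\|w''\|_{L^2}+\|w'\|_\infty^2\big)$.

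To close the estimate I would use that $w\in W^{2,2}(0,1)\cap W_0^{1,2}(0,1)$, so there is $\xi$ with $w'(\xi)=0$, giving $\|w'\|_\infty^2\le 2\|w'\|_{L^2}\|w''\|_{L^2}\le 2\|w\|_{L^2}^{1/2}\|w''\|_{L^2}^{3/2}$. Plugging this in and applying Young's inequality (first to split $\|w'\|_\infty\|w''\|_{L^2}$, then with exponents $4$ and $4/3$ on $\|w\|_{L^2}^{1/2}\|w''\|_{L^2}^{3/2}$) one absorbs a term $\tfrac{c_1}{2}\|w''\|_{L^2}^2$ and is left with $\tfrac{\mathrm d}{\mathrm dt}\|w(t)\|_{L^2}^2\le C'\|w(t)\|_{L^2}^2$ for a.e.\ $t$. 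Integrating from $0$ (where $\|w(0)\|_{L^2}^2=0$) and using Gronwall's inequality forces $w\equiv0$, i.e.\ $u=\tilde u$. The main obstacle is the bookkeeping in the middle step — identifying that the only term of top order is the coercive one $c_1\|w''\|_{L^2}^2$ and that everything else has a spare power to be absorbed after interpolation; this is precisely where the smallness assumption $\E(u_0)<c_0^2/4$ enters, through the uniform $W^{1,\infty}$-control it provides via Theorem \ref{thm:existence} (cf. Remark \ref{ref:remeng}).
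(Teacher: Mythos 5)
Your proposal is correct, and it does something the paper chooses not to do: it gives a self-contained proof. The paper's own ``proof'' of this proposition is a citation to Okabe--Yoshizawa \cite[Theorem 3.2]{Yoshizawa} (proved there for the length-penalized energy $\mathcal{E}+\lambda\mathcal{L}$, $\lambda>0$) together with \cite[Remark 6.6]{Yoshizawa} for the case $\lambda=0$, noting only that the threshold $\mathcal{E}(u_0)<c_0^2/4$ enters through the $W^{1,\infty}$-control of Remark \ref{ref:remeng}. Your Gronwall argument is precisely the argument being outsourced: test the $FVI$ for $u$ with $v=\tilde u(t)$ and vice versa (legitimate because $u(t),\tilde u(t)\in C_\psi$ for all $t$ by Lemma \ref{lem:weakcon}), add to cancel the obstacle multipliers and obtain $\tfrac12\tfrac{d}{dt}\|w\|_{L^2}^2\le -(D\E(u)-D\E(\tilde u))(w)$, then extract the coercive term $-2\int (w'')^2(1+u'^2)^{-5/2}\dx$ from \eqref{eq:eulernaiv} while observing that every other difference term carries at most one factor of $w''$ and at least one factor $\|w'\|_\infty$, which is controlled using the uniform $W^{2,2}$-bound from Theorem \ref{thm:existence} (uniform in the choice of flow, since by the energy-decay requirement and \eqref{eq:standardW22} the bound depends only on $\E(u_0)$). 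The interpolation inequalities you use --- $\|w'\|_\infty^2\le 2\|w'\|_{L^2}\|w''\|_{L^2}$ (from $w'(\xi)=0$) and $\|w'\|_{L^2}^2\le\|w\|_{L^2}\|w''\|_{L^2}$ (integration by parts, using $w(0)=w(1)=0$) --- and the Young absorption are all correct, and the chain-rule step $\tfrac{d}{dt}\|w\|_{L^2}^2=2(\dot w,w)$ for $w\in D^{1,2}((0,\infty),L^2)$ with bounded continuous $L^2$-representative is the same device the paper itself invokes in the proof of Proposition \ref{prop:touch}. In short: same mechanism as the cited reference, but you actually carry it out, which makes the dependence on the smallness threshold transparent rather than implicit in a citation.
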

\begin{proof}
This has been shown \cite[Theorem 3.2]{Yoshizawa} for a \emph{length-penalized} elastic energy $\mathcal{E}+ \lambda \mathcal{L}$, $\lambda > 0$. By \cite[Remark 6.6]{Yoshizawa} however the case $\lambda=0$ can also be shown following the lines of  \cite[Section 3]{Yoshizawa} provided that $\mathcal{E}(u_0) < \frac{c_0^2}{4}$. This energy estimate is needed in the same way as in the existence proof, namely for the control in Remark \ref{ref:remeng}. 
\end{proof}

\begin{cor}[Symmetry preservation]\label{cor:sympres}
Suppose that $\psi$ is symmetric, i.e. $\psi (1- \cdot) = \psi$. Suppose that $u_0 \in C_\psi$ is symmetric and such that $\E(u_0) <  \frac{c_0^2}{4}$. Let $u$ be the $FVI$ gradient flow starting at $u_0$. Then $u(t) = u(t)(1- \cdot)$ for all $t> 0$.    
\end{cor}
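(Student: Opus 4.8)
The plan is to exploit the uniqueness of the $FVI$ gradient flow starting at $u_0$, which has just been established. The key observation is that the reflection operator $R : W^{2,2}(0,1) \cap W_0^{1,2}(0,1) \to W^{2,2}(0,1) \cap W_0^{1,2}(0,1)$ defined by $(Rv)(x) := v(1-x)$ is an isometry for the $L^2$-inner product and also preserves the energy $\mathcal{E}$, since $\mathcal{E}(Rv) = \int_0^1 \frac{v''(1-x)^2}{(1+v'(1-x)^2)^{5/2}} \dx = \mathcal{E}(v)$ by the substitution $x \mapsto 1-x$ (note that $(Rv)'(x) = -v'(1-x)$ and $(Rv)''(x) = v''(1-x)$, so the squares make the signs irrelevant). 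Moreover, because $\psi$ is symmetric, $R$ maps $C_\psi$ bijectively onto itself: $v \geq \psi$ a.e.\ if and only if $Rv \geq R\psi = \psi$ a.e.

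The main step is to verify that if $u$ is the $FVI$ gradient flow starting at $u_0$, then the reflected curve $\tilde u(t) := R(u(t))$ is also an $FVI$ gradient flow starting at $u_0$. First, $\tilde u$ lies in the correct function spaces since $R$ is a bounded linear isometry on both $W^{2,2}(0,1) \cap W_0^{1,2}(0,1)$ and $L^2(0,1)$, and the weak time derivative satisfies $\dot{\tilde u}(t) = R(\dot u(t))$ because $R$ is a fixed bounded linear operator. The four defining properties of Definition \ref{def:fvigrad} are then checked one by one: $\tilde u(t) = R(u(t)) \in C_\psi$ for a.e.\ $t$ since $R$ preserves $C_\psi$; $\mathcal{E}(\tilde u(t)) = \mathcal{E}(u(t))$ coincides a.e.\ with the same nonincreasing $\phi$ with $\phi(0) = \mathcal{E}(u_0)$ (using $\mathcal{E}(Ru_0) = \mathcal{E}(u_0)$); the $C([0,\infty), L^2)$-representative of $\tilde u$ is $R$ applied to that of $u$, hence at $t=0$ equals $R u_0 = u_0$ by symmetry of $u_0$; and finally the $FVI$-inequality for $\tilde u$ follows from that of $u$ by a change of test function. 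Concretely, given $v \in C_\psi$, apply \eqref{eq:endec} for $u$ with the test function $Rv \in C_\psi$:
\begin{equation}
(\dot u(t), Rv - u(t)) + D\mathcal{E}(u(t))(Rv - u(t)) \geq 0 .
\end{equation}
Now apply $R$ inside the $L^2$-inner product (isometry, so $(Ra, Rb) = (a,b)$) to rewrite the first term as $(R\dot u(t), v - Ru(t)) = (\dot{\tilde u}(t), v - \tilde u(t))$, and use that $D\mathcal{E}$ is $R$-equivariant, i.e.\ $D\mathcal{E}(Rw)(R\phi) = D\mathcal{E}(w)(\phi)$ for all $w, \phi$ — this follows by differentiating the identity $\mathcal{E}(R(w + s\phi)) = \mathcal{E}(w + s\phi)$ in $s$, or directly from formula \eqref{eq:eulernaiv} and a substitution — to rewrite the second term as $D\mathcal{E}(\tilde u(t))(v - \tilde u(t))$. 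This gives the $FVI$-inequality for $\tilde u$.

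Having shown that both $u$ and $\tilde u$ are $FVI$ gradient flows starting at $u_0$ (and noting $\mathcal{E}(u_0) < \frac{c_0^2}{4}$ so the uniqueness proposition applies), uniqueness forces $\tilde u = u$, that is, $u(t)(1-\cdot) = u(t)$ for a.e.\ $t > 0$; since both sides are continuous in $t$ with values in $C^1([0,1])$ (using the chosen representative), this holds for all $t > 0$. The only step requiring genuine care is the $R$-equivariance of $D\mathcal{E}$ and of the weak time derivative; both are routine once one writes them out, so I expect no serious obstacle — the substance of the argument is entirely carried by the already-proven uniqueness result.
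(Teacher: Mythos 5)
Your proof is correct and is essentially identical to the paper's own argument: both define the reflected trajectory $\widetilde{u}(t)=u(t)(1-\cdot)$, verify it is an $FVI$ gradient flow starting at the same initial datum using the identity $D\E(w)(\phi)=D\E(w(1-\cdot))(\phi(1-\cdot))$ (your ``$R$-equivariance'', the paper's equation~\eqref{eq:syyymgrad}) together with the change of test function $v\mapsto v(1-\cdot)$, and then invoke uniqueness. The only cosmetic difference is that you package the reflection as a named operator $R$ and verify the defining properties a bit more explicitly; the substance and the key lemma are the same.
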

\begin{proof}
Let $u$ be as in the statement. We show that $\widetilde{u} : t \mapsto u(t)(1-\cdot)$ is an $FVI$ gradient flow. As $u$ and $\widetilde{u}$ have the same initial datum, they must coincide by uniqueness. From the symmetry of $\psi$ follows that $\widetilde{u}(t) \in C_\psi$ for almost every $t> 0$. The regularity requirements are also easily to be checked. Moreover, by symmetry of $\E$, $\E \circ \widetilde{u} = \E \circ u $ coincides almost everywhere with a nonincreasing function that takes the value $\E(u_0)$ at $ t= 0$.

 To verify the $FVI$ equation we first observe by direct computation that for all $u, \phi \in W^{2,2}(0,1) \cap W_0^{1,2}(0,1)$ one has
\begin{equation}\label{eq:syyymgrad}
D\E(u)(\phi) = D\E(u(1-\cdot))(\phi(1- \cdot)) .
\end{equation}
For arbitrary $v \in C_\psi$ we infer by symmetry properties of the $L^2$ scalar product that 
\begin{align}
& (\dot{\widetilde{u}}(t), v - \widetilde{u}(t) ) + D\E (\widetilde{u}(t)) (v- \widetilde{u}(t) ) 
\\ & = (\dot{u}(t)(1- \cdot), v - u(t)(1-\cdot) ) + D\E ( u(t)(1- \cdot) )( v- u(t)(1- \cdot) ) 
\\ & = ( \dot{u}(t), v(1- \cdot)- u(t)) + D\E(u(t))(v (1- \cdot) - u(t) ) \geq 0 ,
\end{align}
for $a.e. \; t > 0$, because $u$ is an $FVI$ Gradient Flow and $v(1- \cdot) \in C_\psi$ because of the symmetry of the obstacle.  
\end{proof}
\section{Qualitative Behavior}
Describing the qualitative behavior of higher order PDEs is in general a challenging task as there is no maximum priciple available that would allow a comparision of solutions. In the field of parabolic obstacle problems one is however interested in several qualitative aspects, in particular the description of the coincidence set $\{ u(t) = \psi\}$ that forms the now time-dependent free boundary of the problem. 
\subsection{The coincidence set}
Here we prove that the obstacle is touched in finite time, provided that the initial energy is suitably small. Not much more can be said about the size of the coincedence set as there exist critical points for which the coincedence set is only a singleton (cf. \cite[Proposition 3.2]{Marius1}). 
\begin{proof}[Proof of Proposition \ref{prop:touch}]
Suppose that $\mathcal{E}(u_0) < G( \sqrt{\frac{2}{3}})^2$. Observe that then 
\begin{equation}
3 - \frac{5}{(1 + G^{-1}(\sqrt{\E(u_0))})^2} < 0 .
\end{equation} 
From Remark \ref{ref:remeng} also follows that $\inf_{u \in C_\psi} \E(u) = \min_{ u \in C_\psi} \E(u)  > 0$. We here prove the slightly stronger statement that each time interval of length larger than 
\begin{equation}
L_0 := \frac{ G^{-1} ( \sqrt{\E(u_0)} )^2 }{2\inf_{u \in C_\psi}\E(u)} \frac{1}{ \frac{5}{(1 + G^{-1}(\sqrt{\E(u_0))})^2} - 3} 
\end{equation}
must contain a time $t$ such that $u(t)$ touches $\psi$. Suppose that $(a,b)$ is an interval of length exceeding $L_0$ such that $\{ u(t) = \psi \} = \emptyset $ on $(a,b)$. Note that then $u(t) > \psi$ and Proposition \ref{prop:concistncy} yields that  
\begin{equation}\label{eq:ELREAL}
(\dot{u}(t) , \phi ) + D\E(u(t)) (\phi) = 0 \quad  \forall \phi \in W^{2,2}(0,1) \cap W_0^{1,2}(0,1)
\end{equation}
for almost every $t \in (a,b)$. 
We use again the Lions-Magenes-Lemma to compute in the sense of distributions we have
\begin{align}
\frac{d}{dt} \int_0^t u(t)^2 \dx = 2 ( \dot{u}(t) , u(t) ) .
\end{align} 
Since $t \mapsto u(t)$ is absolutely continuous with values in $L^2$, so is $t \mapsto ||u(t)||_{L^2}$ with values in $\mathbb{R}$. By the product rule for Sobolev functions and the fact that $t \mapsto ||u(t)||_{L^2}$ is uniformly bounded in $t$, $t \mapsto ||u(t)||_{L^2}^2$ lies in $W^{1,1}(0,1)$. Hence the above inequality holds also pointwise almost everywhere and the fundamental theorem of calculus can be applied. By Theorem \ref{thm:existence}, $u(t) \in C_\psi \cap  W^{3,2}(0,1)$ and $u(t)''(0) = u(t)''(1) = 0 $ for almost every $t$. For those $t$ we can define $A_{t} := \frac{u(t)''}{(1+u(t)'^2)^\frac{5}{4}}$ and use \eqref{eq:ELREAL} and \eqref{eq:euler} to find 
\begin{align}
\frac{d}{dt} \int_0^1 u(t)^2 \dx  & = 2 \int_0^1 u(t) \dot{u}(t) \dx = 4 \int_0^1 \frac{A_t'}{(1+ u(t)'^2)^\frac{5}{4}} u(t)' \dx 
\\ &  = \left[ \frac{4A_t u(t)' }{(1+ u(t)'^2)^\frac{5}{4}} \right]_{x = 0}^{x= 1} - \int_0^1 4A_t \left( \frac{u(t)''}{(1+u(t)'^2)^\frac{5}{4}} - \frac{5}{2} \frac{u(t)'^2u(t)''}{(1+ u(t)'^2)^\frac{9}{4}} \right) \dx  
\\  & = 4  \left( -\int_0^1 A_t^2 \dx  +  \frac{5}{2} \int_0^1 A_t^2 \dx - \frac{u(t)''^2}{(1+ u(t)'^2)^\frac{7}{2}} \dx \right) 
\\ & \leq 4 \left( \frac{3}{2} \E(u(t)) - \frac{5}{2} \int_0^1 \frac{u(t)''^2}{(1+u(t)'^2)^\frac{7}{2}} \dx  \right) 
\\ & \leq2  \left( 3 - \frac{5}{(1+G^{-1}(\sqrt{\E(u_0)})^2} \right) \mathcal{E}(u(t)) \\ &  \leq 2  \left( 3 - \frac{5}{(1+G^{-1}(\sqrt{\E(u_0)})^2} \right) \inf_{u \in C_\psi} \E(u) ,
\end{align}
which is negative by the assumptions.
By the fundamental theorem of calculus (whose applicability we have discussed above) and Remark \ref{ref:remeng} we find 
\begin{align*}
\int_0^1 u(b)^2 \dx  & \leq \int_0^1 u(a)^2 \dx  + 2  \left( 3 - \frac{5}{(1+G^{-1}(\sqrt{\E(u_0)})^2} \right) \inf_{u \in C_\psi} \E(u) (b-a)\\ &  < ||u(a)'||^2_{L^\infty} + L_0 2  \left( 3 - \frac{5}{(1+G^{-1}(\sqrt{\E(u_0)})^2} \right) \inf_{u \in C_\psi}\E(u) 
\\ & \leq G^{-1}(\sqrt{\E(u_0)})^2+ L_0 2  \left( 3 - \frac{5}{(1+G^{-1}(\sqrt{\E(u_0)})^2} \right) \inf_{u \in C_\psi}\E(u) = 0 .
\end{align*}
which results in a contradiction as the expression on the left hand side must be nonnegative. 
\end{proof}
\subsection{Time regularity}
Since the constructed evolution is not driven by an equation but rather by an inequality one can not immediately obtain time regularity from space regularity. In general, time regularity for parabolic obstacle problems is an important problem. A technique that has been applied in previous works, e.g. \cite{Novaga1}, is to consider the flow as singular limit of perturbed evolutions without obstacle. We refer to \cite{Caffarelli} for a discussion of this technique. We remark that this approach heavily relies on uniqueness which is not the focus of this article. This is why we present a different approach.

\begin{lemma}[Time continuity in energy space]
Let $u_0 \in C_\psi$ be such that $\mathcal{E}(u_0) < \frac{c_0^2}{4}$ and let $u$ be the $C([0, \infty), C^1([0,1]) )$-representative of an $FVI$-Gradient Flow starting at $u_0$. Let 
\begin{equation}
A:= \{ s \in [0,\infty) : \lim_{r\rightarrow s} u(r)  = u(s)  \; \textrm{in} \;  W^{2,2}(0,1) \}
\end{equation}
the set of points of $W^{2,2}$-continuity of $u$. Then $|[0, \infty) \setminus A| = 0 $. 
\end{lemma}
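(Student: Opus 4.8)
The plan is to exploit the energy dissipation inequality established above, which for the constructed flow reads $\E(u(T)) + \tfrac12\int_0^T \|\dot u(t)\|_{L^2}^2\,\mathrm{d}t \le \E(u_0)$, but we want a statement that holds for \emph{every} $FVI$ gradient flow; so first I would argue that monotonicity of $t\mapsto \E(u(t))$ (which holds by Definition \ref{def:fvigrad}) together with the uniform $W^{2,2}$-bound from Lemma \ref{lem:weakcon} already does most of the work. Concretely, let $\phi$ be the nonincreasing representative of $t\mapsto\E(u(t))$. Since $\phi$ is monotone and bounded (bounded below by $0$, above by $\E(u_0)$), it is continuous outside a countable set $N\subset[0,\infty)$, in particular $|N|=0$. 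I claim $[0,\infty)\setminus N \subset A$, which immediately gives $|[0,\infty)\setminus A|=0$.

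To prove the inclusion, fix $s\notin N$ and let $r_n\to s$. By Lemma \ref{lem:weakcon}, $u(r_n)\rightharpoonup u(s)$ weakly in $W^{2,2}(0,1)$, hence $u(r_n)'\to u(s)'$ uniformly on $[0,1]$ by compact Sobolev embedding, and $\E(u(r_n))=\phi(r_n)\to\phi(s)=\E(u(s))$ by continuity of $\phi$ at $s$. Now write $\E(v)=\int_0^1 A_v^2\,\mathrm{d}x$ where $A_v = v''/(1+v'^2)^{5/4}$; the pointwise convergence $u(r_n)'\to u(s)'$ in $C^0$ forces the weight $(1+u(r_n)'^2)^{5/4}$ to converge uniformly to a strictly positive continuous limit. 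Thus $A_{u(r_n)}\rightharpoonup A_{u(s)}$ weakly in $L^2$ (weak convergence of $u(r_n)''$ times uniform convergence of the weight), while $\|A_{u(r_n)}\|_{L^2}^2=\E(u(r_n))\to\E(u(s))=\|A_{u(s)}\|_{L^2}^2$; weak convergence plus norm convergence in the Hilbert space $L^2$ gives $A_{u(r_n)}\to A_{u(s)}$ \emph{strongly} in $L^2$. Finally $u(r_n)'' = A_{u(r_n)}(1+u(r_n)'^2)^{5/4}$ is a product of an $L^2$-strongly convergent sequence and a uniformly convergent sequence of continuous functions, so $u(r_n)''\to u(s)''$ strongly in $L^2$, i.e. $u(r_n)\to u(s)$ in $W^{2,2}(0,1)$. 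Hence $s\in A$.

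The main obstacle I anticipate is the passage from weak $W^{2,2}$-convergence plus energy convergence to strong $W^{2,2}$-convergence: the energy $\E$ is not simply $\|u''\|_{L^2}^2$ but a nonlinearly weighted version, so one cannot apply the Radon--Riesz property directly to $u''$. The device above --- factoring $\E$ through the quantity $A_u$ so that $\E(u)=\|A_u\|_{L^2}^2$ exactly, applying Radon--Riesz to $A_u$ in $L^2$, and then multiplying back by the (uniformly convergent, uniformly positive) weight --- is exactly what resolves this, and it is the step that uses the specific compact structure of $\E$ rather than soft functional analysis. The remaining points (countability of the discontinuity set of a monotone function, continuity of $\phi$ giving $\E$-convergence along \emph{any} sequence $r_n\to s$, and compactness of $W^{2,2}\hookrightarrow C^1$) are standard and I would treat them briefly.
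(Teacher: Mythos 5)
Your route is genuinely different from the paper's: the paper derives an explicit $W^{2,2}$-distance estimate directly from the FVI inequality tested at $v=u(t)$, at Lebesgue points of $\|\dot{u}\|_{L^2}$, whereas you avoid the FVI entirely and instead pass through the Radon--Riesz property of $L^2$ applied to $A_u = u''/(1+u'^2)^{5/4}$, so that $\E(u)=\|A_u\|_{L^2}^2$ converts energy convergence into strong $L^2$-convergence of $A_u$, which you then multiply back by the uniformly converging weight to recover strong $W^{2,2}$-convergence. That device is correct and elegant, and both proofs crucially rely on Lemma \ref{lem:weakcon} and weak lower semicontinuity of $\E$.

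There is, however, a genuine gap in the claimed inclusion $[0,\infty)\setminus N\subset A$. The definition of an FVI gradient flow only gives $\E(u(t))=\phi(t)$ for $t$ in a conull set $\widetilde B$, not for every $t$, yet your argument invokes both $\E(u(r_n))=\phi(r_n)$ along an \emph{arbitrary} sequence $r_n\to s$ and $\phi(s)=\E(u(s))$ at an \emph{arbitrary} continuity point $s$. Neither follows, and the inclusion itself is dubious: if $\E(u(s))<\phi(s)$ at a continuity point $s\notin\widetilde B$ (which nothing excludes a priori), then choosing $r_n\to s$ with $r_n\in\widetilde B$ gives $\E(u(r_n))=\phi(r_n)\to\phi(s)>\E(u(s))$, so no strong $W^{2,2}$-convergence can hold and $s\notin A$. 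The repair is small: shrink the good set to $\widetilde B\cap\bigl([0,\infty)\setminus N\bigr)$, still conull, and replace the two equalities by one-sided estimates. By Lemma \ref{lem:weakcon} and weak lower semicontinuity, approximating any $t$ by a sequence in $\widetilde B$ from the right yields $\E(u(t))\leq\phi(t+)$ for \emph{every} $t>0$; for $s\in\widetilde B$ a continuity point of $\phi$ and $r_n\to s$ arbitrary this gives $\limsup_n\E(u(r_n))\leq\lim_n\phi(r_n+)=\phi(s)=\E(u(s))$, while weak lower semicontinuity along $r_n$ gives the opposite inequality, whence $\E(u(r_n))\to\E(u(s))$. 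With this correction your Radon--Riesz argument closes the proof, and the result is arguably more economical than the paper's since it bypasses the Lebesgue-point and FVI considerations entirely.
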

\begin{proof}
Since $u$ is an $FVI$ Gradient Flow, there exists a nonincreasing function $\phi : [0,\infty) \rightarrow \mathbb{R}$ and a set $\widetilde{B} \subset [0,\infty)$ such that $|[0, \infty) \setminus \widetilde{B}| = 0 $
and  $\mathcal{E} \circ u = \phi$ on $\widetilde{B}$. Also, let $B$ be all the points of continuity of $\phi$ that lie in $\widetilde{B}$. Since $\phi$ is monotone, $\widetilde{B} \setminus B$ is at most countable and we find that $|[0,\infty) \setminus B| = 0$. 
 Moreover define 
\begin{equation*}
E:= \{ s: s \textrm{ is not a Lebesgue point of} \; ||\dot{u}||_{L^2}, \; \textrm{or $FVI$ does not hold true at s} \}.
\end{equation*}
Since $|E|=0$ it suffices to show that each point $s \in B \setminus E$ is a point of $W^{2,2}$-continuity. We fix therefore $s \in  B \setminus E$ and let first $t> 0$ be arbitrary. All we know then is that $u(t)\in C_\psi$ by Lemma \ref{lem:weakcon}. Now we compute, again using that $x \mapsto \frac{1}{(1+x^2)^\frac{5}{2}}$ is Lipschitz continuous and Remark \ref{ref:remeng}
\begin{align*}
& \frac{1}{( 1 + G^{-1} (\sqrt{\E(u_0)})^2) ^\frac{5}{2}}  \int_0^1 ( u(t)'' - u(s)'')^2  \dx \leq \int_0^1 \frac{(u(t)'' - u(s)'')^2}{(1+ u(s)'^2)^\frac{5}{2}} \dx
\\ & = \int_0^1 \frac{u(t)''^2}{(1+ u(s)'^2)^\frac{5}{2}} \dx - \int_0^1 \frac{u(s)''^2}{(1+ u(s)'^2)^\frac{5}{2}} \dx  +   2 \int_0^1 \frac{u(s)'' ( u(s)''- u(t)'')}{(1 + u(s)'^2)^\frac{5}{2}} \dx
\\ & = \int_0^1 \frac{u(t)''^2}{(1+u(t)'^2)^\frac{5}{2}} \dx + \int_0^1 u(t)''^2 \left[ \frac{1}{(1 + u(s)'^2)^\frac{5}{2}} - \frac{1}{(1 + u(t)'^2)^\frac{5}{2}}  \right] \dx \\ & \quad  -  \int_0^1 \frac{u(s)''^2}{(1+ u(s)'^2)^\frac{5}{2}} \dx  +  D\E(u(s))(u(s) - u(t)) \\ & \quad + 5 \int_0^1 \frac{u(s)''^2 u(s)'}{(1 +u(s_n)'^2)^\frac{7}{2}} (u(s)' - u(t)' )  \dx 
\\ & \leq \E(u(t)) - \E(u(s))     + \frac{5}{2} \E(u_0) ( 1 + G^{-1}(\sqrt{\E(u_0)}) ^2)^\frac{5}{2} ||u(s)' - u(t)'||_{L^\infty} \\ &   \quad + ( \dot{u}(s), u(t) - u(s) ) + 5 \E(u(s)) || u(s) ' - u(t)'||_{L^\infty} 
\\ & \leq  \E(u(t)) - \E(u(s)) + (D+ ||\dot{u}(s)||_{L^2}) || u(s) ' - u(t)'||_{L^\infty}
\end{align*}
where $D> 0$ is an appropriately chosen constant that does not depend on $t$. We find that  there exists $C_0 > 0 $ such that
\begin{equation}\label{eq:contii}
||u(t)- u(s)||_{W^{2,2}} \leq C_0(\E(u(t)) - \phi(s)) + (D+ ||\dot{u}(s)||_{L^2}) || u(s) ' - u(t)'||_{L^\infty})
\end{equation}
for all arbitrary $t > 0.$ Now let $\epsilon > 0 $ be arbitrary. Since $s$ is a point of continuity of $\phi$ there exists $\delta_1 > 0 $ such that $\sup_{t \in B_{\delta_1} ( s) } |\phi(t) - \phi(s) | < \frac{\epsilon}{2C_0}$. Moreover  $||\dot{u}(s) ||_{L^2}< \infty$ as $s$ is a Lebesgue point of $\dot{u}$ and therefore there exists $\delta_2 > 0 $ such that 
$\sup_{ t \in B_{\delta_2} (s) } || u(t)' - u(s)'||_{L^\infty} < \frac{\epsilon}{2C_0( D + ||\dot{u}(s) ||) }$.  Now choose $\delta := \min\{\frac{\delta_1}{2}, \delta_2 \}$. Let $t \in (0,\infty)$ be such that $|t -s | <\delta$. Then there exists a sequence $t_n \rightarrow t $ such that $(t_n)_{n = 1}^\infty \subset B$ as $|(0, \infty) \setminus B| = 0$. We can assume without loss of generality that for all $n \in \mathbb{N}$ one has $|t_n - t| < \frac{\delta_1}{2}$, which implies $|t_n - s| < \delta_1$ for all $n \in \mathbb{N}$. Now note that by weak lower semicontinuity of $\E$ (cf. \cite[Proof of Lemma 2.5]{Anna}) and Lemma \ref{lem:weakcon} we have 
\begin{equation}
(\E(u(t)) - \phi(s)) \leq \liminf_{n \rightarrow \infty} (\E(u(t_n)) - \phi(s)) \leq \liminf_{n \rightarrow \infty} (\phi(t_n) - \phi(s)) \leq \frac{\epsilon}{2C_0}  
\end{equation}
by the choice of $\delta_1$. This and \eqref{eq:contii} imply that 
\begin{equation}
||u(t) - u(s)||_{W^{2,2}} < \epsilon. \qedhere
\end{equation}
\end{proof}

\section{Critical Points}
In the next section we want to examine the critical points of $\mathcal{E}$ in $C_\psi$. 
One question that could be asked is how many critical points exist. A partial answer is given in \cite[Corollary 5.22]{Marius2} and \cite{Miura3}, where it is shown that there exist no critical points above an obstacle of a certain height. This is also why our convergence results may only hold true for small obstacles. 

Once existence is ensured, another question one can look at is symmetry of critical points, which is to expect since the equation has a symmetry: If $u\in C_\psi$  solves \eqref{eq:symm} and $\psi= \psi(1- \cdot)$ then also $u(1- \cdot) \in C_\psi$ is a solution of \eqref{eq:symm}, as follows directly from \eqref{eq:syyymgrad}.

\begin{lemma}[Regularity and concavity of critical points]\label{lem:critii}
Let $u \in C$ be a critical point. Then $u \in W^{3,\infty}(0,1), u''(0) = u''(1)= 0$ and $u$ is concave. Moreover, if $u > \psi$ on some interval $(a,b)$ then $u_{\mid_{[a,b]}} \in C^\infty([a,b])$ and  
\begin{equation}\label{eq:72}
\frac{A_u'}{(1+u'^2)^\frac{5}{4}} \equiv \mathrm{const.} \; on \; (a,b). 
\end{equation}
\end{lemma}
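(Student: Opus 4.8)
The plan is to derive the stated properties from the variational inequality \eqref{eq:symm} by the same kind of measure-theoretic argument already used in Lemma \ref{lem:w3infbds} and Lemma \ref{lem:exreg}, and then to promote this to smoothness on non-coincidence intervals via a bootstrap. First I would record that a critical point $u$ lies in $C_\psi$, hence $u \in W^{2,2}(0,1) \cap W_0^{1,2}(0,1)$, so in particular $u \in C^1([0,1])$ with $u' \in W^{1,2}$. Testing \eqref{eq:symm} with $v = u + \phi$ for $\phi \in C_0^\infty(0,1)$, $\phi \geq 0$, shows that $\phi \mapsto D\E(u)(\phi)$ is a nonnegative distribution, hence represented by a nonnegative Radon measure $\mu$ on $(0,1)$; that is, using \eqref{eq:eulernaiv},
\begin{equation}
2\left(\frac{u''}{(1+u'^2)^{5/2}}\right)' + 5\frac{u''^2 u'}{(1+u'^2)^{7/2}} = \mu
\end{equation}
in the distributional sense. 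Since the second term is an $L^1$ function (it is controlled by $u''^2 \in L^1$), this says $\left(\frac{u''}{(1+u'^2)^{5/2}}\right)'$ is a signed Radon measure, so $\frac{u''}{(1+u'^2)^{5/2}} \in BV(0,1)$ and in particular is bounded; since $(1+u'^2)^{5/2}$ is bounded (as $u' \in C([0,1])$), we get $u'' \in L^\infty(0,1)$, i.e. $u \in W^{2,\infty} \hookrightarrow C^{1,1}$.

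Next I would upgrade to $W^{3,\infty}$. With $u'' \in L^\infty$ the $L^1$-term $5\frac{u''^2 u'}{(1+u'^2)^{7/2}}$ is in fact in $L^\infty(0,1)$, so $\left(\frac{u''}{(1+u'^2)^{5/2}}\right)'$ equals $\frac12\mu$ minus an $L^\infty$ function; but one must still argue $\mu$ itself is nice. The cleaner route, mirroring Lemma \ref{lem:exreg}, is to split: for $\phi$ supported compactly in $\{u > \psi\}$ one gets $D\E(u)(\phi) = 0$, so $\left(\frac{u''}{(1+u'^2)^{5/2}}\right)' = -\frac52\frac{u''^2 u'}{(1+u'^2)^{7/2}}$ there, an $L^\infty$ right-hand side, whence $\frac{u''}{(1+u'^2)^{5/2}} \in W^{1,\infty}$ locally in $\{u>\psi\}$; globally, since $\mu$ is supported in the closed coincidence set, the same computation as in Lemma \ref{lem:w3infbds} (using that $\psi$ is only continuous but the measure part is handled as there) gives $\frac{u''}{(1+u'^2)^{5/2}} \in W^{1,\infty}(0,1)$, hence $u''\in W^{1,\infty}$, i.e. $u\in W^{3,\infty}(0,1)$. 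The Navier conditions $u''(0)=u''(1)=0$ come from the boundary terms in the integration by parts in \eqref{eq:eulernaiv}: since $v-u$ ranges over a set of admissible variations with $(v-u)'(0),(v-u)'(1)$ free (after localizing near the endpoints where $u>\psi$, because $\psi(0),\psi(1)<0=u(0)=u(1)$ forces $u>\psi$ near $0$ and $1$), the variational inequality becomes an equality there and the boundary term $\left[\frac{u''(v-u)'}{(1+u'^2)^{5/2}}\right]_0^1$ must vanish for all such variations, forcing $u''(0)=u''(1)=0$.

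For concavity, I would use the nonnegative-measure structure: from $\left(\frac{u''}{(1+u'^2)^{5/2}}\right)' = \frac12\mu - \frac52\frac{u''^2 u'}{(1+u'^2)^{7/2}}$ one does not immediately see a sign, so instead I would argue as follows. The function $G$ from \eqref{eq22} satisfies $G(u')' = \frac{u''}{(1+u'^2)^{5/4}} = A_u$; rewriting \eqref{eq:euler} and the variational inequality in terms of $A_u$, and testing with suitable one-sided variations (a standard trick: vary $u$ downward where it is strictly above $\psi$), one shows $A_u$ is monotone, in fact that $\left(\frac{A_u'}{(1+u'^2)^{5/4}}\right)$ is a nonpositive measure, which via $G(u')'' \leq 0$ in a weighted sense yields that $G(u')$ is concave, and since $G$ is increasing this gives concavity of the "angle" and ultimately of $u$ itself — the quickest way is: $u$ is concave iff $u'$ is nonincreasing iff $G(u')$ is nonincreasing, and $G(u')(0^+) \geq 0 \geq G(u')(1^-)$ together with the monotonicity of $A_u = G(u')'$ obtained from the sign of $\mu$ closes it. On any interval $(a,b)$ with $u>\psi$, the equality case forces $D\E(u)(\phi)=0$ for all $\phi\in C_0^\infty(a,b)$, so by \eqref{eq:euler} (valid now since $u\in W^{3,\infty}$, $u''(0)=u''(1)=0$ after a localization/cutoff argument) $\left(\frac{A_u'}{(1+u'^2)^{5/4}}\right)' = 0$ distributionally on $(a,b)$, giving \eqref{eq:72}; then \eqref{eq:72} is a second-order ODE for $u$ with smooth (real-analytic) coefficients in $u,u'$ away from the coincidence set, and a standard elliptic/ODE bootstrap yields $u\in C^\infty([a,b])$.

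The main obstacle I anticipate is the rigorous handling of the concavity claim: extracting the correct sign information from the variational inequality requires choosing admissible test functions $v$ that decrease $u$ without violating $v\geq\psi$, and combining this with the structure of $D\E$ — this is the step where the mere continuity of $\psi$ (no semiconvexity, no one-sided bounds) makes the localization delicate, and where one must be careful that the measure $\mu$ has the sign that makes $A_u$ monotone in the right direction. The $W^{3,\infty}$-regularity and the ODE \eqref{eq:72} on non-coincidence intervals are, by contrast, essentially repetitions of arguments already carried out for the flow in Lemma \ref{lem:w3infbds} and Lemma \ref{lem:exreg}.
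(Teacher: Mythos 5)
The concavity step is where your proposal breaks down, and this is the one place the paper does real work (the regularity, Navier conditions, and \eqref{eq:72} are simply cited from \cite[Corollary 3.2 and Theorem 5.1]{Anna}, while your re-derivation via Lemma \ref{lem:w3infbds}--style measure arguments is a plausible but longer alternative). The paper's concavity proof uses a Stampacchia-type truncation: from \eqref{eq:euler} one has, for every nonnegative $\phi \in W^{2,2}\cap W_0^{1,2}$,
\begin{equation}
\int_0^1 \frac{A_u'\,\phi'}{(1+u'^2)^{5/4}}\,\dx \leq 0,
\end{equation}
and after a density upgrade one plugs in $\phi = \max\{A_u,0\}$, which is admissible because $A_u\in W^{1,\infty}$ and $A_u(0)=A_u(1)=0$; this forces $\int \max\{A_u,0\}'^2/(1+u'^2)^{5/4} \leq 0$, hence $A_u\leq 0$ a.e.\ and $u''\leq 0$. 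You do not find this test function, and the alternative you sketch is incorrect on several points. You assert that $\bigl(\frac{A_u'}{(1+u'^2)^{5/4}}\bigr)$ is ``a nonpositive measure,'' but the variational inequality yields sign information only on the \emph{derivative} of this quantity, and with the opposite sign: $\bigl(\frac{A_u'}{(1+u'^2)^{5/4}}\bigr)'$ is a nonnegative measure. This gives that $\frac{A_u'}{(1+u'^2)^{5/4}}$ is nondecreasing, which says nothing directly about the sign of $A_u$ or of $G(u')''$. Your attempted closure via ``$G(u')(0^+)\geq 0\geq G(u')(1^-)$'' is unjustified and, worse, circular: $u'(0)\geq 0$ would follow from nonnegativity of $u$ near $0$, which in Remark \ref{rem:posi} is itself deduced from concavity. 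Finally, the suggestion to ``vary $u$ downward where $u>\psi$'' only recovers the Euler--Lagrange equality on the non-coincidence set and contributes nothing to the global sign of $u''$; in an obstacle problem the only globally admissible one-sided variations are upward.

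A correct monotonicity-based alternative does exist in the neighborhood of what you wrote: since $\frac{A_u'}{(1+u'^2)^{5/4}}$ is nondecreasing, $A_u'$ changes sign at most once (from $\leq 0$ to $\geq 0$), so $A_u$ is first nonincreasing then nondecreasing; combined with $A_u(0)=A_u(1)=0$ this gives $A_u\leq 0$. But this is not the argument you gave, and the argument you gave does not close.
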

\begin{proof}
For the regularity, \eqref{eq:72} and the fact that $u''(0) = u''(1) = 0$, we refer to \cite[Corollary 3.2 and Theorem 5.1]{Anna}. For the concavity observe that by \eqref{eq:euler} 
\begin{equation}\label{eq:73}
\int_0^1 \frac{A_u' \phi'}{(1+u'^2)^\frac{5}{4}} \leq 0  \quad \forall \phi \in W^{2,2}(0,1) \cap W_0^{1,2}(0,1): \phi \geq 0 .
\end{equation}
By density we obtain that the same holds true for all  $\phi \in W_0^{1,2}(0,1)$ such that $\phi \geq 0$. Plugging in $\phi = \max\{A_u,0\}$, which is admissible as by the previous regularity $A_u \in W^{1,\infty}(0,1)$ and $A_u(0) = u''(0) = 0$ and $A_u(1)= u''(1) = 0$. We obtain that 
\begin{equation}
\int_0^1 \frac{\max\{ A_u , 0 \}'^2}{(1 + (u')^2)^\frac{5}{4}} \leq 0 .
\end{equation}
This implies that $\max\{ A_u ,0 \} = 0 $ a.e. and hence $A_u \leq 0 $ a.e.. In particular we can conclude that $u'' \leq 0 $ a.e. which implies the concavity of $u$. 
\end{proof}
\begin{remark}\label{rem:posi}
Note that concavity of critical points implies in particular that those are nonnegative, i.e. $u \geq 0$. 
\end{remark}
\subsection{Symmetry of minimizers}
Critical points of special importance are minimizers, which exist by Remark \ref{ref:remeng} whenever $\inf_{u \in C_\psi} \E(u) < \frac{c_0^2}{4}$. Here we investigate symmetry of those. The main method used will use is a nonlinear version of Talenti's inequality for which we need some additional notation. For $f \in L^1(0,1)$ we denote by $\mu_f(t) := |\{ f > t \}|$ and by $f^*(x) := \inf \{ t >0 : \mu_f(t) < x \}$. Moreover we define $f_*(x) := f^*( 2 | x- \frac{1}{2}|)$ and call $f_*$ the \emph{symmetric decreasing rearrangement} of $f$. Note that for each decreasing function $g: (0,1) \rightarrow (0,1)$ one has that $r(x) := g( 2 |x-\frac{1}{2}| ) $ satisfies $r_* = r$. Another important fact is that $||f||_{L^p} = ||f^*||_{L^p} = ||f_*||_{L^p}$ for each $p \in [1, \infty]$, cf. \cite[Section 3.3]{LiebLoss}.

The proof of the next result can be regarded as a special case of \cite[Theorem 1]{Talenti} in one dimension. Since the assumptions in this article differ however slightly from our situation we give a self-contained proof, which however follows the lines of the proof in \cite{Talenti}.
\begin{lemma}[A nonlinear version of Talenti's symmetrization result] \label{lem:talenti}
Let $H \in C^\infty(\mathbb{R})$ be an odd function that satisfies $H' > 0 $. Moreover, let $f \in L^2(0,1)$ be nonnegative and such that $ \frac{1}{2}||f||_{L^2(0,1)} \leq ||H||_\infty$. 
Suppose that $u \in W^{2,2}(0,1)$ is a nonnegative weak solution of 
\begin{equation}\label{eq:conzi}
\begin{cases}
-H(u')' = f & \textrm{in} \;  (0,1), \\
u(0) = u(1) = 0 .
\end{cases}
\end{equation}
Then there exists a unique symmetric weak solution $v \in W^{2,2}(0,1)$ of 
\begin{equation}\label{eq:Talenti}
\begin{cases}
-H(v')' = f_* & \; \textrm{in}\;  (0,1), \\
v(0) = v(1) = 0.
\end{cases}
\end{equation}
If $\frac{1}{H^{-1}}$ is convex on $[0,H(||u'||_\infty)]$ then one has $v \geq u_*$.
\end{lemma}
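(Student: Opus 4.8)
The plan is to follow Talenti's original strategy adapted to the one-dimensional nonlinear setting. First I would establish existence and uniqueness of the symmetric solution $v$ of \eqref{eq:Talenti}. Since $f_*$ is again nonnegative and $\|f_*\|_{L^2} = \|f\|_{L^2} \le 2\|H\|_\infty$, the function $w(x) := \int_0^x H^{-1}(g(y))\,\mathrm{d}y$, where $g$ is the (unique, affine-on-each-half) primitive of $-f_*$ normalized so that $w(1)=0$, is the natural candidate; here the bound $\tfrac12\|f\|_{L^2} \le \|H\|_\infty$ is what guarantees the argument of $H^{-1}$ stays in the range of $H$ so that $H^{-1}$ is defined. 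Symmetry of $v$ follows from symmetry of $f_*$ together with uniqueness: the reflection $v(1-\cdot)$ solves the same problem. This part is essentially a direct integration since we are in one dimension.

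For the comparison $v \ge u_*$, the key is the one-dimensional analogue of the Pólya–Szegő / Talenti pointwise estimate. Writing the weak form of \eqref{eq:conzi} and integrating, for a.e.\ level $t$ one has, on the superlevel set $\{u>t\}$, a relation of the form $H(u'(x)) = \int_x^{\cdot} f$ evaluated between the two boundary points of the (possibly disconnected) superlevel set; the coarea formula gives
\begin{equation}
\mu_u(t)' = -\int_{\{u = t\}} \frac{1}{|u'|}\,\mathrm{d}\mathcal{H}^0,
\end{equation}
and Cauchy–Schwarz (or rather the one-dimensional degenerate case, which is simply the observation that on an interval the integral of $f$ controls $H(u')$ at the endpoints) yields a differential inequality for the distribution function of $u$ against that of $v$. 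Here the hypothesis that $1/H^{-1}$ is convex on $[0, H(\|u'\|_\infty)]$ enters exactly as in Talenti: it is needed to pass from the differential inequality at the level of distribution functions to the pointwise inequality $u^* \le v^*$, because one must compare integrals of $1/H^{-1}$ applied to the respective ``masses'' $\int f$, and Jensen's inequality in the right direction requires this convexity. Rearranging superlevel sets into intervals and using $\|f\|_{L^1(\{u>t\})} \le \|f_*\|_{L^1(\{v>t\})}$ (a standard rearrangement fact, since $\{v>t\}$ is a centered interval of the same measure and $f_*$ is the decreasing rearrangement) then gives $u^* \le v^*$ pointwise, hence $u_* \le v_*= v$.

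The main obstacle I anticipate is the careful bookkeeping for \emph{disconnected} superlevel sets of $u$: unlike the symmetrized solution, $u$ need not be unimodal, so $\{u>t\}$ may be a union of several intervals, and one must sum the endpoint contributions of $H(u')$ correctly and show the resulting inequality still closes up. This is handled in Talenti by summing over connected components and using that the total ``flux'' $\int_{\{u>t\}} f$ is what appears; the convexity of $1/H^{-1}$ is precisely what makes the sum-over-components estimate subadditive in the favorable direction. A secondary technical point is justifying all the coarea and integration-by-parts manipulations at the $W^{2,2}$ (hence $C^1$) regularity available, together with checking that $\|u'\|_\infty < \infty$ so that the interval $[0,H(\|u'\|_\infty)]$ on which convexity is assumed is meaningful — this is immediate from the Sobolev embedding $W^{2,2}(0,1) \hookrightarrow C^1([0,1])$.
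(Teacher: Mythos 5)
Your proposal follows the same Talenti-style strategy as the paper: explicit construction and uniqueness for $v$ by direct integration (the paper gives the concrete formula
$v(x) = \tfrac{1}{2}\int_{2|x-1/2|}^1 H^{-1}\bigl(\tfrac12 \int_0^s f^*\bigr)\,\mathrm{d}s$,
and proves uniqueness by pinning down the integration constant at $x=\tfrac12$ via symmetry, which is essentially your reflection argument), then a distribution-function argument for $u^* \le v^*$ driven by the convexity of $1/H^{-1}$ and Jensen's inequality. You have correctly identified where the $L^2$-smallness of $f$ is needed (so that $H^{-1}$ is defined on the relevant range) and why convexity of $1/H^{-1}$ enters — this is the heart of the matter.

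There is, however, one genuinely problematic step in your sketch. You write that you would use $\int_{\{u>t\}} f \le \int_{\{v>t\}} f_*$, justified by saying that $\{v>t\}$ is "a centered interval of the same measure" as $\{u>t\}$. But $\mu_v(t) = \mu_u(t)$ is not known a priori — in fact $\mu_u(t) \le \mu_v(t)$ is essentially the conclusion you are after — so this step as stated is circular. The correct Hardy–Littlewood bound, and the one the paper uses, is $\int_{\{u>t\}} f \le \int_0^{\mu_u(t)} f^*(r)\,\mathrm{d}r$, which involves only $\mu_u(t)$; this is then fed into the differential inequality and integrated up. Relatedly, the paper does not compare $\mu_u$ with $\mu_v$ directly but instead introduces the auxiliary quantity $\Phi(t) := \int_{\{u>t\}} H(u')u'\,\mathrm{d}x$, shows $-\Phi'(t) \le \int_0^{\mu_u(t)} f^*$, applies Jensen (via convexity of $1/H^{-1}$) together with the coarea-type inequality $-\tfrac{d}{dt}\int_{\{u>t\}}|u'|\,\mathrm{d}x \ge 2$ to deduce $1 \le -\tfrac{\mu_u'(t)}{2} H^{-1}\bigl(\tfrac12 \int_0^{\mu_u(t)} f^*\bigr)$, and integrates to get $u^*(x) \le \int_x^1 \tfrac12 H^{-1}\bigl(\tfrac12\int_0^s f^*\bigr)\,\mathrm{d}s = v^*(x)$. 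Your "flux at endpoints plus sum over connected components" description points at the same mechanism but skips the precise object one needs to differentiate; once you replace your rearrangement inequality by the one depending only on $\mu_u(t)$ and introduce $\Phi$ (or an equivalent), the argument closes as you anticipated.
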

\begin{proof}Without loss of generality $u \not \equiv 0 $, otherwise the claim is trivially true. 
To show the existence of $v$ we set 
\begin{equation}\label{eq:symexpl}
v(x) := \frac{1}{2} \int_{ 2 |x-\frac{1}{2}|}^1 H^{-1} \left( \frac{1}{2} \int_0^s f^*(r) \dr \right) \ds.  
\end{equation}
Note that $v$ is well-defined because of the fact that $\frac{1}{2} ||f^*||_{L^2(0,1)} =\frac{1}{2}||f||_{L^2(0,1)}    \leq ||H||_\infty$ and inverse function $H^{-1}$ is defined on $ H(\mathbb{R}) =  ( -||H||_\infty, ||H||_\infty )$.  Symmetry and \eqref{eq:Talenti} follow by direct computation. 
For the uniqueness suppose that $v_1,v_2$ are symmetric weak solutions of \eqref{eq:Talenti}. It follows that $H(v_1'(x))- H(v_2'(x)) = \mathrm{const.}$ on $(0,1)$. Plugging in $x= \frac{1}{2}$ and using that by symmetry $v_1'(\frac{1}{2}) = v_2'(\frac{1}{2}) = 0$ we find that the constant on the right hand side equals zero and hence $H(v_1') = H(v_2')$. As $H$ is by assumption invertible we obtain $v_1' = v_2'$ and now the fact that $v_1(0) = v_2(0)= 0 $ implies the claim. For the Talenti-type inequality we define as in \cite{Talenti}
\begin{equation}
 \Phi(t) := \int_{ \{ u > t \}} H(u') u' \dx.
\end{equation}  
which is nonnegative and nonincreasing in $t$ as $H(u')u' = H(|u'|)|u'| \geq  0$. Hence $\Phi$ is almost everywhere differentiable. Let $t$ now be a point of differentiability of $\Phi$. Note that
\begin{equation}
\Phi(t) = \int_0^1 H(u') ( \max(u-t, 0 ) )' \dx = \int_0^1 f(x) \max(u(x)-t,0) \dx. 
\end{equation}
Observe that then for each $h > 0 $ 
\begin{equation}
 \Phi(t) - \Phi(t+h) = \int_0^1 f(x) (\max(u-t,0)- \max(u-t-h, 0) ) \dx \leq h \int_{\{u > t\} } f(x) dx .
\end{equation}
By \cite[Equation (2.6b)]{Talenti} we obtain 
\begin{equation}\label{eq:26b}
- \Phi'(t) \leq \int_{\{u > t\} } f(x) dx  \leq \int_0^{\mu_u(t)} f^*(r) \dr .
\end{equation}
By \cite[Equation (2.22)]{Talenti}) we have 
\begin{equation}
-\frac{d}{dt} \int_{ \{ u > t \} } |u'| \dx \geq 2 \quad a.e. \; t. 
\end{equation} 
By monotonicity of $H^{-1}$ and by Jensen's inequality, which is applicable because of the convexity assumption on $\frac{1}{H^{-1}}$ we obtain that for almost every $t$ 
\begin{align}
 \frac{1}{H^{-1}} \left( \frac{-\Phi'(t)}{2} \right)& \leq  \frac{1}{H^{-1}} \left( \frac{-\Phi'(t)}{- \frac{d}{dt} \int_{ \{u > t \} } |u'| \dx  } \right) 
 = \lim_{h \rightarrow 0+ }  \frac{1}{H^{-1}} \left( \frac{\Phi(t) - \Phi(t+h)}{ \int_{ \{t < u \leq t+h\} } |u'| \dx }\right)
\\ & = \lim_{h \rightarrow 0+ }  \frac{1}{H^{-1} }\left( \frac{\int_{ t < u \leq t+ h } H(u') u' \dx }{ \int_{ \{t < u \leq t+h\} } |u'| \dx }\right)
\\ & = \lim_{h \rightarrow 0+ }  \frac{1}{H^{-1} }\left( \frac{\int_{ \{t < u \leq t+ h \} } H(|u'|) |u'| \dx }{ \int_{ \{t < u \leq t+h\} } |u'| \dx }\right)
\\ & \leq  \lim_{h \rightarrow 0+ }  \frac{\int_{ \{ t < u \leq t+ h \} } 1 \dx }{ \int_{ \{t < u \leq t+h\} } |u'| \dx} = \lim_{h \rightarrow 0 +}  \frac{\mu_u(t) - \mu_u(t+h)}{\int_{ \{t < u \leq t+h\} } |u'| \dx} \\ &  = \frac{-\mu_u'(t)}{- \frac{d}{dt} \int_{ \{u > t \} } |u'| \dx }  \leq - \frac{\mu_u'(t)}{2}.
\end{align}
Hence for almost every $t > 0 $ we have by the previous computation and by \eqref{eq:26b} 
\begin{equation}\label{eq.Numbbaa}
1 \leq  - \frac{\mu_u'(t)}{2} H^{-1} \left(  \frac{-\Phi'(t)}{2   } \right) \leq  - \frac{\mu_u'(t)}{2} H^{-1} \left(  \frac{1}{2   } \int_0^{\mu_u(t)} f^*(r) \dr  \right) .
\end{equation}
Now define 
\begin{equation}
W(t) := \int_{\mu_u(t)}^1 \frac{1}{2} H^{-1} \left( \frac{1}{2} \int_0^s f^*(r) \dr  \right) \ds.
\end{equation}
Note that $W$ is increasing. With the mean value theorem for integrals it can be shown that $W$ is differentiable at $t$ at all points of differentiability of $\mu_u$ and at all such points \eqref{eq.Numbbaa} yields $W'(t) \geq 1$. 
 By \cite[Proposition 4.7]{Chae} 
 \begin{equation}
 t = \int_0^t 1 \ds \leq \int_0^t W'(s) \ds \leq W(t) - W(0) .
 \end{equation}
 Note that $W(0) = 0 $ as $\mu_u(0) = 1$. This is so since $u$ is by \eqref{eq:conzi} concave and nonnegative and therefore $\{ u= 0 \} = \{0,1 \}$ or $u \equiv 0$ where we excluded the last case in the beginning of the proof. Hence $ \{u =0 \}$ is a Lebesgue null set and therefore $|\{ u > 0 \}| = 1.$ We obtain that $ t \leq W(t)$, i.e. 
 \begin{equation}
 t \leq \int_{\mu_u(t)}^1 \frac{1}{2} H^{-1} \left( \frac{1}{2} \int_0^s f^*(r) \dr  \right)\ds  .
 \end{equation}
By the very definition of $u^*$ we get that 
\begin{equation}
u^*(x) \leq \int_x^1 \frac{1}{2} H^{-1} \left( \frac{1}{2} \int_0^s f^*(r) \dr  \right) \ds.
\end{equation}
 Finally 
 \begin{equation}
 u_*(x) = u^*\left( 2 \left\vert x- \nicefrac{1}{2} \right\vert \right) \leq   \int_{2 |x- \frac{1}{2}|}^1 \frac{1}{2} H^{-1} \left( \frac{1}{2} \int_0^s f^*(r) \dr  \right) = v(x),
 \end{equation}
 where we used \eqref{eq:symexpl} in the last step. 
\end{proof}
\begin{cor}[Symmetry of minimizers]\label{cor:minisym}
Suppose that $\psi$ is symmetric and radially decreasing, i.e. $\psi_* = \psi$. If $\inf_{u \in C_\psi} \mathcal{E}(u)< G(2)^2$ then there exists a symmetric minimizer of $\mathcal{E}$.
\end{cor}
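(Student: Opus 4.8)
The plan is to take an arbitrary minimizer $u$ of $\E$ in $C_\psi$ and to symmetrize it via Lemma~\ref{lem:talenti} with $H:=G$, obtaining a \emph{symmetric} competitor $v\in C_\psi$ with the same energy, hence a symmetric minimizer. Existence of some minimizer $u$ is already available: $\inf_{C_\psi}\E<G(2)^2<\frac{c_0^2}{4}$ because $G(2)<\frac{c_0}{2}$, so Remark~\ref{ref:remeng} applies; fix such a $u$, which then satisfies $\E(u)=\inf_{C_\psi}\E<G(2)^2$.

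First I would record the structure of $u$. Being a minimizer, $u$ is a critical point, so by Lemma~\ref{lem:critii} we have $u\in W^{3,\infty}(0,1)$, $u''(0)=u''(1)=0$, and $u$ is concave; thus $A_u\le 0$ and, by Remark~\ref{rem:posi}, $u\ge 0$. Since $G(u')\in W^{2,\infty}(0,1)$ with $(G(u'))'=G'(u')\,u''=A_u$, the function $u$ is a nonnegative weak solution of
\[
-G(u')'=f \quad\text{in }(0,1),\qquad u(0)=u(1)=0,
\]
with $f:=-A_u\ge 0$ lying in $L^\infty(0,1)\subset L^2(0,1)$. Now I would verify the hypotheses of Lemma~\ref{lem:talenti} for $H=G$: the function $G$ is odd, smooth, with $G'>0$ and $\|G\|_\infty=\frac{c_0}{2}$; and $\frac12\|f\|_{L^2}=\frac12\|A_u\|_{L^2}=\frac12\sqrt{\E(u)}<\frac12 G(2)<\frac{c_0}{2}=\|G\|_\infty$.

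The only substantial point is the convexity of $\frac{1}{G^{-1}}$ on $[0,G(\|u'\|_\infty)]$, and this is exactly where the threshold $G(2)^2$ enters. From the standard estimate $\E(u)\ge G(\|u'\|_\infty)^2$ and $\E(u)<G(2)^2$ one gets $\|u'\|_\infty<2$, so it suffices to check convexity of $\frac{1}{G^{-1}}$ on $[0,G(2)]$. Differentiating twice and substituting $s=G^{-1}(t)$ (using $\frac{ds}{dt}=(1+s^2)^{5/4}$) gives
\[
\Big(\frac{1}{G^{-1}}\Big)''(t)=\frac{\big(4-G^{-1}(t)^2\big)\big(1+G^{-1}(t)^2\big)^{3/2}}{2\,G^{-1}(t)^3},
\]
which is $\ge 0$ precisely when $|G^{-1}(t)|\le 2$, i.e. when $|t|\le G(2)$. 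Hence Lemma~\ref{lem:talenti} applies and produces a unique symmetric $v\in W^{2,2}(0,1)$ solving $-G(v')'=f_*$ with $v(0)=v(1)=0$ and $v\ge u_*$.

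Finally I would check that $v$ is an admissible minimizer. We have $v\in W^{2,2}(0,1)\cap W_0^{1,2}(0,1)$, and since the rearrangement is order-preserving, $u\ge\psi$ gives $u_*\ge\psi_*=\psi$, so $v\ge u_*\ge\psi$ and thus $v\in C_\psi$. Moreover $-G(v')'=f_*$ means $A_v=(G(v'))'=-f_*$ a.e., whence
\[
\E(v)=\int_0^1 A_v^2\dx=\int_0^1 f_*^2\dx=\int_0^1 f^2\dx=\int_0^1 A_u^2\dx=\E(u)=\inf_{C_\psi}\E,
\]
because rearrangement preserves the $L^2$ norm. Therefore $v$ is a symmetric minimizer. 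The main obstacle is the convexity computation above; everything else reduces to the observation that a minimizer, being concave, automatically solves the \emph{second}-order equation $-G(u')'=-A_u$ to which Talenti's lemma directly applies, together with routine rearrangement bookkeeping.
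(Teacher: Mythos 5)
Your proposal is correct and matches the paper's proof essentially line by line: existence of a minimizer via Remark~\ref{ref:remeng}, concavity/nonnegativity/regularity from Lemma~\ref{lem:critii} and Remark~\ref{rem:posi}, reduction to the second-order equation $-G(u')'=f$ with $f:=-A_u\ge 0$, verification of the $L^2$-smallness and of the convexity of $\tfrac{1}{G^{-1}}$ on $[0,G(2)]$ (your expression $\frac{(4-G^{-1}(t)^2)(1+G^{-1}(t)^2)^{3/2}}{2\,G^{-1}(t)^3}$ is the same as the paper's $\frac{2-\tfrac12 G^{-1}(s)^2}{G^{-1}(s)^3}(1+G^{-1}(s)^2)^{3/2}$), and finally $v\ge u_*\ge\psi_*=\psi$ with $\E(v)=\|f_*\|_{L^2}^2=\|f\|_{L^2}^2=\E(u)$. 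Nothing is missing and nothing diverges from the paper's argument.
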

\begin{proof}
Let $u \in  C_\psi$ be a minimizer, which exists by Remark \ref{ref:remeng} as $G(2)^2 < \frac{c_0^2}{4}$. Note also by Remark \ref{ref:remeng} that $||u'||_\infty \leq 2$. Moreover $u$ is concave and nonnegative by Lemma \ref{lem:critii} and Remark \ref{rem:posi}. Hence $u\in C_\psi$ is a nonnegative solution of 
\begin{equation}
\begin{cases}
-[G(u')]' = -[G(u')]' & \textrm{on } (0,1) \\ 
u(0) = u(1) = 0 .
\end{cases}
\end{equation}
Observe that $f := -G(u')' $ is nonnegative as $f = - \frac{u''}{(1+ u'^2)^\frac{5}{4}} \geq 0 $ almost everywhere due to concavity of $u$. Also observe that  $||f||_{L^2}^2  = \inf_{w \in C_\psi} \E(w)$. Now define $v \in W^{2,2}$ to be the unique solution of 
\begin{equation}
\begin{cases}
-[G(v')]' = f_* & \textrm{on } (0,1), \\ 
v(0) = v(1) = 0 .
\end{cases}
\end{equation}
We will now use Lemma \ref{lem:talenti} to deduce that $v \geq (u)_* \geq \psi_* = \psi$. To apply Lemma \ref{lem:talenti} we have to check that  $\frac{1}{2}||f||_{L^2} < ||G||_\infty$ and that 
$\frac{1}{G^{-1}}$ is convex on $[0, G(||u'||_\infty)]$. For the $L^2$-bound we can look at 
\begin{equation}
\frac{1}{2}||f||_{L^2} \leq \frac{1}{2} \sqrt{\E(u)} \leq G(2) < ||G||_\infty. 
\end{equation}
 For the convexity of $\frac{1}{G^{-1}}$ we can compute for arbitrary $s \in (0,||G||_\infty)$ that
 \begin{equation}
 \left( \frac{1}{G^{-1}(s)} \right)'' = \frac{2-\frac{1}{2}G^{-1}(s)^2}{G^{-1}(s)^3} ( 1+ G^{-1}(s)^2)^\frac{3}{2}
 \end{equation}
 which makes $\frac{1}{G^{-1}}$ convex on $[0, G(2)]$. Note that $||u'||_\infty \leq2$ implies $G(||u'||_\infty) \leq G(2)$ and hence  $\frac{1}{G^{-1}}$ is convex in $[0, G(||u'||_\infty)]$. Thus Lemma \ref{lem:talenti} is applicable and we find that $v \in C_\psi$ is admissible and symmetric. Moreover
 \begin{equation}
 \mathcal{E}(v) = ||f_*||_{L^2}^2 = ||f||_{L^2}^2 = \mathcal{E}(u) = \inf_{w \in C_\psi} \E(w) ,
 \end{equation}
 which implies that $v \in C_\psi$ is  another minimizer. 
\end{proof}
\subsection{Uniqueness of symmetric critical points}

We show now uniqueness of critical points for symmetric cone obstacles. This will follow from a more general uniqueness result for solutions to ODEs that we will prove in the appendix. 
\begin{lemma}[Uniqueness of strictly concave solutions, Proof in Appendix \ref{ref:appc}]\label{lem:75}
Let $x_0 > 0$ and let $J : [0,x_0] \rightarrow \mathbb{R}$ be  nonnegative and decreasing such that $J > 0$ on $(0,x_0)$ Further assume that $J$ is locally Lipschitz continuous on $(0,x_0)$ and $J(x_0) = 0$. Then there exists at most one solution $f \in C^2([0, \frac{1}{2}])$ to  
\begin{equation}
\begin{cases}
f'(r) = J(f(r)) & r \in [0, \frac{1}{2}], \\
f(\frac{1}{2}) = x_0, f(0) = 0,  \\
\textrm{f is strictly concave on $(0, \frac{1}{2}]$}.
 \end{cases} 
\end{equation}
Here we call a $C^2$-function strictly concave on a set $A$ if $f''< 0 $ on $A$.  
\end{lemma}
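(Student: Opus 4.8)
The plan is to reduce the autonomous first-order equation to an explicit integral formula for the inverse of a solution; this sidesteps the endpoints $0$ and $x_0$, where $J$ need not be Lipschitz, and turns uniqueness into the triviality that the formula for the inverse involves only $J$.

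First I would record the elementary consequences of the hypotheses. Since $J$ is nonnegative, decreasing, strictly positive on $(0,x_0)$ and $J(x_0)=0$, one has $J(0)\geq J(\sigma)>0$ for every $\sigma\in(0,x_0)$, so $J>0$ on $[0,x_0)$ and $1/J$ is nondecreasing on $[0,x_0)$, hence bounded on $[0,\sigma_0]$ for each $\sigma_0<x_0$. Since $J$ is only defined on $[0,x_0]$, any admissible $f$ satisfies $f([0,\tfrac12])\subseteq[0,x_0]$, and from $f'=J(f)\geq 0$ it is nondecreasing. Evaluating the equation gives $f'(\tfrac12)=J(x_0)=0$ and $f'(0)=J(0)>0$. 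Strict concavity on $(0,\tfrac12]$ means $f'$ is strictly decreasing there, so $f'(r)>f'(\tfrac12)=0$ for $r\in(0,\tfrac12)$; combined with $f'(0)>0$ this yields $f'>0$ on $[0,\tfrac12)$. Hence $f$ is strictly increasing on $[0,\tfrac12]$, with $f(0)=0$ and $f(\tfrac12)=x_0$, so it is a homeomorphism of $[0,\tfrac12]$ onto $[0,x_0]$.

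Next I would pass to $g:=f^{-1}:[0,x_0]\to[0,\tfrac12]$. For $\sigma\in(0,x_0)$ one has $g(\sigma)\in(0,\tfrac12)$, where $f$ is $C^2$ with $f'=J(f)>0$; by the inverse function theorem $g$ is $C^1$ on $(0,x_0)$ with $g'(\sigma)=1/f'(g(\sigma))=1/J(\sigma)$. Since $1/J$ is bounded near $0$ and $g$ is continuous on $[0,x_0]$ with $g(0)=0$, integrating $g'=1/J$ and sending the lower limit to $0$ gives
\[
g(\sigma)=\int_0^\sigma \frac{1}{J(\tau)}\,\mathrm{d}\tau,\qquad \sigma\in[0,x_0);
\]
letting $\sigma\to x_0^-$ and using continuity of $g$ at $x_0$ shows this integral is finite and equals $g(x_0)=\tfrac12$, so the formula holds on all of $[0,x_0]$. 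Crucially its right-hand side depends only on $J$.

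Finally, given two admissible solutions $f_1,f_2$, the first step makes each strictly increasing and the second step gives $g_1\equiv g_2$ on $[0,x_0]$, whence $f_1=g_1^{-1}=g_2^{-1}=f_2$ on $[0,\tfrac12]$. The only place where genuine care is needed is the first step: the merely local Lipschitz hypothesis on $(0,x_0)$ is too weak for a direct Picard--Lindel\"of or Gronwall uniqueness argument at the endpoints, and it is precisely strict concavity that forces $f$ to be globally strictly monotone, so that the inverse exists on the whole interval and never ``stalls'' at the degenerate value $x_0$. Once this is in hand, the degeneracy of $J$ at $0$ is harmless because $1/J$ stays bounded there, and the degeneracy at $x_0$ is harmless because one only takes a one-sided limit rather than solving the equation.
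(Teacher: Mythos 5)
Your argument is correct and follows essentially the paper's route: both separate variables to derive the identity $\int_0^{f(s)} \frac{d\tau}{J(\tau)} = s$ (equivalently, the paper's $\int_{f(s)}^{x_0} \frac{dz}{J(z)} = \tfrac{1}{2} - s$), which pins down $f(s)$ uniquely because the integrand depends only on $J$. The only cosmetic difference is that you reparametrize by the inverse $g = f^{-1}$ and integrate $g' = 1/J$ directly, whereas the paper changes variables in $\int_s^{r_n} f'(r)/J(f(r))\,dr$ and lets $r_n \uparrow \tfrac{1}{2}$ by monotone convergence — your version incidentally avoids invoking the local Lipschitz hypothesis on $J$, which the paper cites only to justify the substitution.
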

The previous lemma is inspired by the following observation: A primary example for nonuniqueness of solutions to initial value problems is
\begin{equation}
\begin{cases}
\dot{x}(t) = 2 \sqrt{|x(t)|}, \\ x(0) = 0.
\end{cases}
\end{equation}
It possesses infinitely many solutions but only one of them, namely $t \mapsto t^2$, is strictly convex in $(0, \infty)$, cf. Figure \ref{fig:fig3}. 
\begin{figure}
\includegraphics[scale=0.6]{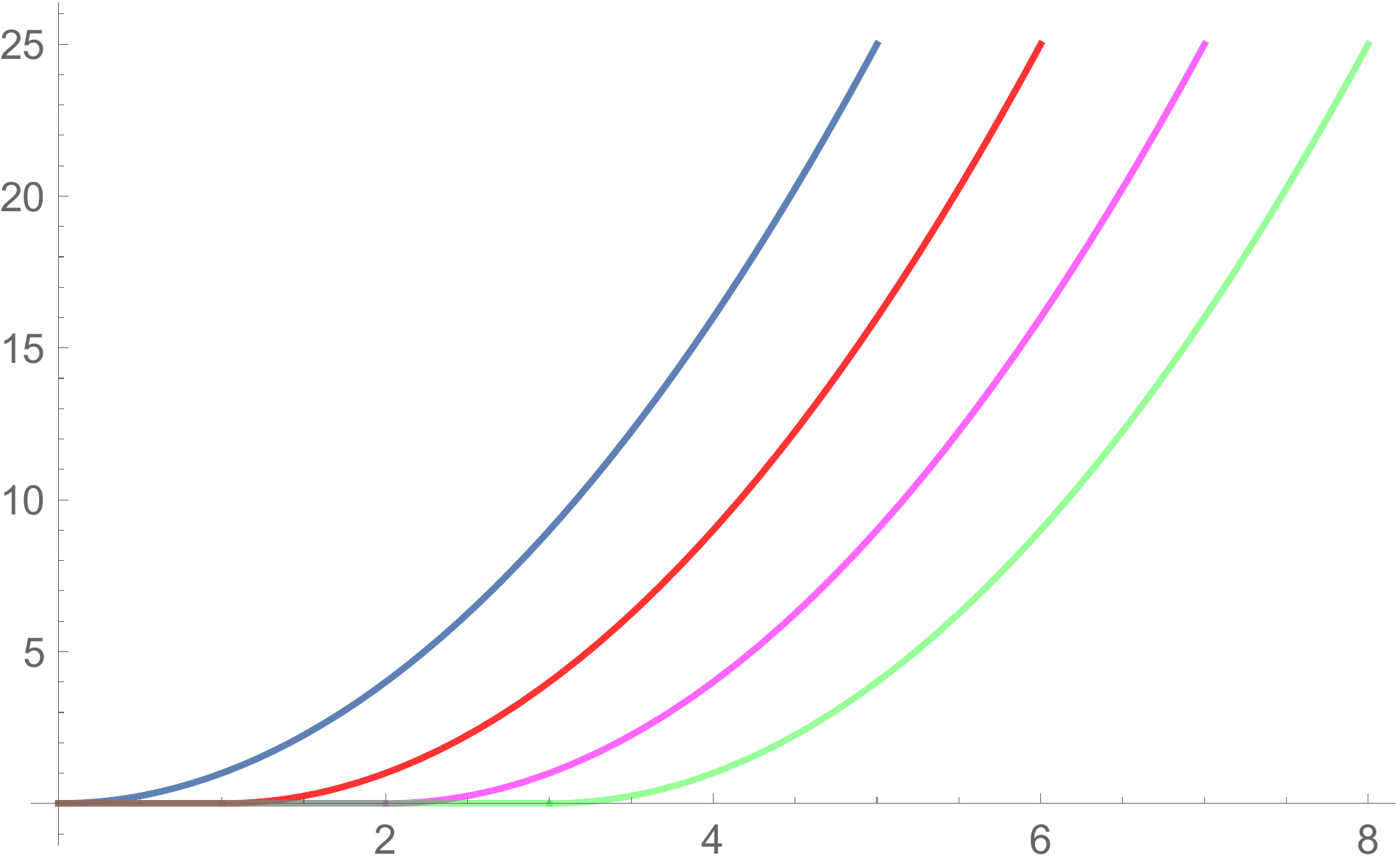}
\caption{The solution with initial value $0$ is not unique, but only one of the solutions is stricly convex.}
\label{fig:fig3}
\end{figure}

The following analysis of critical points has been obtained independently in \cite{Yoshizawa2}.

\begin{lemma}[Symmetric cone obstacles]\label{lem:symcone} 
Let $\psi \in C([0,1])$ be a symmetric cone obstacle, i.e. $\psi$ is symmetric and affine linear on $[0,\frac{1}{2}]$. Then there exists at most one symmetric constrained critical point of $\mathcal{E}$ in $C$. 
\end{lemma}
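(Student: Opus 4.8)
The plan is to reconstruct any symmetric constrained critical point explicitly on $[0,\tfrac12]$ and reduce the resulting boundary value problem to the abstract ODE uniqueness statement Lemma~\ref{lem:75}. So let $u\in C_\psi$ be a symmetric constrained critical point. By Lemma~\ref{lem:critii}, $u$ is concave, $u\in W^{3,\infty}(0,1)$, $u''(0)=u''(1)=0$, on each component of $\{u>\psi\}$ the quantity $\frac{A_u'}{(1+u'^2)^{5/4}}$ is constant and $u$ is smooth there; by Remark~\ref{rem:posi} $u\geq 0$; and symmetry gives $u'(\tfrac12)=0$, so it suffices to recover $u$ on $[0,\tfrac12]$. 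Since $\psi$ is a cone obstacle, $\psi|_{[0,1/2]}$ is affine, hence $u-\psi$ is concave and nonnegative on $[0,\tfrac12]$ with $(u-\psi)(0)=-\psi(0)>0$ by Assumption~\ref{ref:ass1}; a concave nonnegative function on an interval that is positive at the left endpoint can vanish only at the right endpoint, so $\{u=\psi\}\cap[0,\tfrac12]\subseteq\{\tfrac12\}$, and symmetrically on $[\tfrac12,1]$, whence $\{u=\psi\}\subseteq\{\tfrac12\}$. Moreover contact must actually occur: if $u>\psi$ on all of $(0,1)$ then $\frac{A_u'}{(1+u'^2)^{5/4}}\equiv c$ there, but by the reflection symmetry of the Euler--Lagrange equation (cf.\ the discussion around \eqref{eq:syyymgrad}) this constant changes sign under $x\mapsto 1-x$, so $u=u(1-\cdot)$ forces $c=0$, hence $A_u\equiv A_u(0)=0$, hence $u\equiv 0$, contradicting $\psi(\tfrac12)>0$. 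Thus $m_0:=u(\tfrac12)=\psi(\tfrac12)$ is fixed by the obstacle.

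Next I would extract a first integral on $(0,\tfrac12)\subseteq\{u>\psi\}$, where $A_u'=c(1+u'^2)^{5/4}$ for a constant $c$. The case $c=0$ gives $A_u\equiv 0$, i.e.\ $u\equiv 0$ on $[0,\tfrac12]$, impossible; the case $c>0$ gives $A_u>0$ on $(0,\tfrac12]$ (as $A_u(0)=0$), i.e.\ $u''>0$, contradicting concavity. Hence $c<0$, $A_u<0$ on $(0,\tfrac12]$, $u$ is strictly concave there, $u'$ strictly decreases from $\alpha:=u'(0)>0$ to $0$, and $u\colon[0,\tfrac12]\to[0,m_0]$ is an increasing bijection. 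Multiplying $A_u'=c(1+u'^2)^{5/4}$ by $A_u$ and using $(1+u'^2)^{5/4}G'(u')=1$ gives $A_uA_u'=cu''$, so $\tfrac12 A_u^2=cu'+C_0$; evaluating at $0$ and $\tfrac12$ yields $C_0=-c\alpha$, so $A_u=-\sqrt{2|c|}\,\sqrt{\alpha-u'}$ on $[0,\tfrac12]$.

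Now for the reduction: put $\sigma:=G(u')$, so $\sigma'=G'(u')u''=A_u$, $\sigma(0)=G(\alpha)=:x_0>0$, $\sigma(\tfrac12)=0$, and set $f(r):=\sigma(\tfrac12-r)$. Then $f\in C^2([0,\tfrac12])$ (on $(0,\tfrac12)$, $u$ is smooth and $\sigma''=A_u'=c(1+u'^2)^{5/4}$ extends continuously up to both endpoints), $f(0)=0$, $f(\tfrac12)=x_0$, $f$ is strictly concave since $f''(r)=\sigma''(\tfrac12-r)=c(1+u'(\tfrac12-r)^2)^{5/4}<0$, and the first integral rewrites as $f'=J(f)$ with $J(y):=\sqrt{2|c|}\,\sqrt{\alpha-G^{-1}(y)}$. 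One checks that $J$ is nonnegative and decreasing on $[0,x_0]$, positive on $(0,x_0)$, locally Lipschitz on $(0,x_0)$, and $J(x_0)=0$, so Lemma~\ref{lem:75} applies and gives: for given $x_0$ and $J$ (equivalently, given $\alpha$ and $c$) there is at most one such $f$, hence $u'$ on $[0,\tfrac12]$, hence $u$ (via $u(0)=0$ and, by symmetry, on $[\tfrac12,1]$) is uniquely determined. It then remains to pin down $\alpha$ and $c$ in terms of the obstacle: integrating $f'=J(f)$ over $[0,\tfrac12]$ and computing $\int_0^{1/2}u'\dx=m_0$, then substituting $y=G(q)$, gives
\[ \sqrt{2|c|}=2\int_0^\alpha\frac{\mathrm{d}q}{(1+q^2)^{5/4}\sqrt{\alpha-q}},\qquad m_0\sqrt{2|c|}=\int_0^\alpha\frac{q\,\mathrm{d}q}{(1+q^2)^{5/4}\sqrt{\alpha-q}}; \]
dividing, $\alpha$ solves $2m_0=\Theta(\alpha)$ with
\[ \Theta(\alpha):=\frac{\int_0^\alpha q\,(1+q^2)^{-5/4}(\alpha-q)^{-1/2}\,\mathrm{d}q}{\int_0^\alpha (1+q^2)^{-5/4}(\alpha-q)^{-1/2}\,\mathrm{d}q}, \]
and then $c$ is fixed by the first identity.

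I expect the delicate point to be this last one: one must show that $\Theta$ is strictly monotone, so that $\alpha$, hence $c$, hence $u$, is unique (if $2m_0$ falls outside the range of $\Theta$, there is simply no symmetric critical point, which is consistent with the statement). The value $\Theta(\alpha)$ is a weighted mean of $q\in[0,\alpha]$ against the density $(1+q^2)^{-5/4}(\alpha-q)^{-1/2}$, and raising $\alpha$ both enlarges the domain of integration and reweights it through the factor $(\alpha-q)^{-1/2}$, two competing effects; this is exactly the place where evaluating these integrals via hypergeometric functions becomes useful. By contrast, the structural steps (coincidence set $\subseteq\{\tfrac12\}$, strictness $c<0$, the first integral) are routine once Lemma~\ref{lem:critii} is available, and checking that $J$ meets the hypotheses of Lemma~\ref{lem:75} is immediate.
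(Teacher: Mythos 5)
Your structure tracks the paper's almost step for step: reduce the coincidence set to $\{\tfrac12\}$ and show contact must occur; sign the multiplier (here $c<0$) and deduce strict concavity; extract the first integral $\tfrac12A_u^2=c(u'-u'(0))$; funnel the resulting non-Lipschitz first-order ODE through Lemma~\ref{lem:75}; and, finally, pin down the remaining free scalar by a strict-monotonicity claim for a ratio of two weighted integrals. The two genuinely different choices you make are both small and both legitimate. Where you force the constant $c$ to vanish by the reflection argument, the paper instead applies Rolle to $A_u$ on $[0,1]$ using $A_u(0)=A_u(1)=0$. And where you feed Lemma~\ref{lem:75} the transformed unknown $f(r)=G\bigl(u'(\tfrac12-r)\bigr)$ with the explicit $J(y)=\sqrt{2|c|}\sqrt{\alpha-G^{-1}(y)}$, the paper applies the same lemma to $u$ itself, with $J$ only implicitly defined as the inverse of an integral expression; your choice of dependent variable makes the hypothesis check on $J$ more transparent. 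For the touching argument note also that the quick route to $\{u=\psi\}\subseteq\{\tfrac12\}$ in the paper runs through $u$'s tangent line rather than through concavity of $u-\psi$, but the content is identical.

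The step you flag as delicate and leave open is, however, the substantive one, and the comparison you set up (``raising $\alpha$ both enlarges the domain and reweights it'') is not one that resolves with a short convexity or stochastic-dominance argument. Observe that your $\Theta$ is exactly $2H$ where $H$ is defined in \eqref{eq:H}, so your claim is exactly $H'>0$. This is what Appendix~\ref{app:B} of the paper is devoted to: both integrals are identified as hypergeometric values via \cite[Lemma~C5]{Marius1}, the Pfaff transformation moves the argument into $[0,1)$, and a Cauchy-product rearrangement of the resulting series, combined with the second-order hypergeometric ODE for $\HF(1,\tfrac14,\tfrac74,\cdot)$, reduces $H'$ to a manifestly positive expression. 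Without this piece your argument proves uniqueness of $u$ only for fixed $(\alpha,c)$, i.e.\ modulo a one-parameter family of candidate symmetric critical points, so the lemma is not yet established. Everything up to and including the reduction to Lemma~\ref{lem:75} is correct and complete; what is missing is precisely the monotonicity of $\Theta=2H$.
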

\begin{proof}
Let $u$ be a symmetric critical point. We will in the following derive an explicit formula for $u$ that characterizes it uniquely.
We claim first that $\{ u = \psi \} =  \{ \frac{1}{2} \} $. In case that $u(a) = \psi(a)$ for some $a \in (0, \frac{1}{2})$ one gets  $u'(a) = \psi'(a) $ and by concavity (cf. Lemma \ref{lem:critii}) one has for all $x \in (0, \frac{1}{2})$  
\begin{equation}
u(x) \leq u(a) + u'(a) (x-a) = \psi(a) + \psi'(a) (x-a) = \psi(x) ,
\end{equation}
a contradiction to the nonnegativity of $u$. Hence $u$ cannot touch $\psi$ on $(0, \frac{1}{2})$ and similarly one shows that $u$ cannot touch $\psi$ on $(\frac{1}{2},1)$. Morover, we assert that $u$ has to touch $\psi$, for if not then one obtains by Proposition \ref{prop:33} that $u \in C^\infty([0,1])$ and 
\begin{equation}
\frac{A_u'}{(1+u'^2)^\frac{5}{4}} \equiv \mathrm{const.} \quad \textrm{on $(0,1)$}.
\end{equation}
But since $A_u(0)= A_u(1) = 0 $ one can find a point $\xi \in (0,1)$ such that $A_u'(\xi) = 0$. Therefore 
\begin{equation}
\frac{A_u'}{(1+u'^2)^\frac{5}{4}} \equiv 0 
\end{equation}
and hence $A_u \equiv \mathrm{const}$. This yields $u'' \equiv 0$ but then the boundary conditions imply $u \equiv 0$, a contradiction to Assumption \ref{ref:ass1}. Hence $\{u = \psi \} = \{ \frac{1}{2} \}$. In particular by basic properties of the variational inequality there exists $C \in \mathbb{R}$ such that 
\begin{equation}\label{eq:elint}
A_u'(x) = C(1+ u'(x)^2)^\frac{5}{4}  \quad \forall x \in (0,\nicefrac{1}{2}).
\end{equation}  
As a further intermediate claim we assert that $C \neq 0 $. Indeed, if $C = 0$  then $A_u' \equiv 0$ which implies together with $A_u(0) = 0 $ that $A_u \equiv 0$. Then however $u'' \equiv 0$ on $(0, \frac{1}{2})$ which implies that $u' \equiv \mathrm{const}$ on $[0, \frac{1}{2}]$. But $u$ is symmetric and therefore $u'(\frac{1}{2}) =0$ resulting in $u' \equiv 0$. As a result $u\equiv 0$ which yields again a contradiction to $u \in C_\psi$. Hence $C \neq 0 $.

 As an indermediate claim, we assert that that $u$ is strictly concave on $(0, \frac{1}{2}]$, i.e. $u'' > 0 $ on $(0, \frac{1}{2}]$. To show this we can multiply \eqref{eq:elint} by $A_u$ and integrate to obtain
\begin{equation}
\frac{1}{2} A_u^2 =C ( u'(x) - u'(0) ) \quad \forall x \in (0, \nicefrac{1}{2}]
\end{equation}
and thus 
\begin{equation}\label{eq:dingsi}
u''(x)^2= 2C (u'(x)- u'(0)) (1+ u'(x)^2)^\frac{5}{2} \quad \forall x \in (0, \nicefrac{1}{2}]. 
\end{equation}
If there were now $x_0  \in (0 , \frac{1}{2}]$ such that $u''(x_0)= 0$, the above equation would imply that $u'(x_0) = u'(0)$ and because of monotonicity of $u'$ one has that $u' \equiv u'(0)$ on $(0,x_0)$. Another look at \eqref{eq:dingsi} implies that then $u'' \equiv 0 $ on $(0,x_0)$. This implies that $A_u' \equiv 0$ on $(0,x_0)$. This however is a contradiction to $C = 0$ when looking at \eqref{eq:elint}.
Since $u$ is strictly concave on $(0,\frac{1}{2}]$ we find that for all $x> 0$ one has $u'(x) < u'(0)$ and now
\begin{equation}
u''(x) =  - \sqrt{2|C|} \sqrt{u'(0)- u'(x) } ( 1+ u'(x)^2)^\frac{5}{4} \quad \forall x \in (0, \nicefrac{1}{2}).
\end{equation} 
Since the right hand side does not vanish on $(0, \frac{1}{2})$ we obtain 
\begin{equation}
\frac{- u''(x) }{\sqrt{u'(0)- u'(x) } ( 1+ u'(x)^2)^\frac{5}{4}} = \sqrt{2|C|} \quad \forall x \in (0, \nicefrac{1}{2}).
\end{equation}
Next we fix $\epsilon > 0$ and integrate from $\epsilon$ to some arbitary $x \in (0 , \frac{1}{2})$ to get after a substitution of $s = u'(x)$
\begin{equation}
\int_{u'(x)}^{u'(\epsilon)} \frac{1}{\sqrt{u'(0) - s } ( 1+ s^2)^\frac{5}{4}} \ds = \sqrt{2|C|} (x-\epsilon) 
\end{equation}
We can pass to the limit as $\epsilon \downarrow 0$ using the monotone convergence theorem on the left hand side to obtain
\begin{equation}
\int_{u'(x)}^{u'(0)} \frac{1}{\sqrt{u'(0) -s} (1+s^2)^\frac{5}{4}} \ds = \sqrt{2|C|} x.
\end{equation}
As $u$ is symmetric one has $u'(\frac{1}{2}) = 0  $ which implies 
\begin{equation}
\int_{0}^{u'(0)} \frac{1}{\sqrt{u'(0) -s} (1+s^2)^\frac{5}{4}} \ds = \frac{1}{2}\sqrt{2|C|} .
\end{equation}
Note that this means 
\begin{equation}
u'(x) = F^{-1}(x) \quad \forall x \in (0, \nicefrac{1}{2})
\end{equation}
where 
\begin{equation}
F(z) = \frac{\int_{z}^{u'(0)} \frac{1}{\sqrt{u'(0)- s} (1+s^2)^\frac{5}{4}} \ds}{2 \int_0^{u'(0)} \frac{1}{\sqrt{u'(0)-s} (1+s^2)^\frac{5}{4}}  \ds }.
\end{equation}
Therefore 
\begin{align}
u(x) & = \int_0^x u'(s) \ds = \int_0^x F^{-1}(s) \ds = \int_{F^{-1}(0)}^{F^{-1}(z) } z F'(z) \dz 
\\ & = \frac{1}{2} \frac{\int_{u'(x)}^{u'(0)} \frac{z}{\sqrt{u'(0) - z} {(1+z^2)^\frac{5}{4}}} \dz }{\int_{0}^{u'(0)} \frac{1}{\sqrt{u'(0) - z} {(1+z^2)^\frac{5}{4}}} \dz}.\label{eq:737}
\end{align}
Note that this already yields an equation for $u$ with only one free parameter, namely $u'(0)$. We show next that $u'(0)$ is uniquely determined by $\psi( \frac{1}{2})$.  To this end we compute
\begin{equation}\label{eq:eind}
\psi( \nicefrac{1}{2}) = u( \nicefrac{1}{2} ) = \frac{1}{2} \frac{\int_{0}^{u'(0)} \frac{z}{\sqrt{u'(0) - z} {(1+z^2)^\frac{5}{4}}} \dz }{\int_{0}^{u'(0)} \frac{1}{\sqrt{u'(0) - z} {(1+z^2)^\frac{5}{4}}} \dz} =: H(u'(0)),
\end{equation}
where 
\begin{equation}\label{eq:H}
H(A) := \frac{1}{2} \frac{\int_{0}^{A} \frac{z}{\sqrt{A - z} {(1+z^2)^\frac{5}{4}}} \dz }{\int_{0}^{A} \frac{1}{\sqrt{A - z} {(1+z^2)^\frac{5}{4}}} \dz}.
\end{equation}
We show in Appendix \ref{app:B} that $H$ is a strictly monotone and smooth function of $A$. Hence there exists one unique $A>0 $ such that $H(A) = \psi(\frac{1}{2})$. We conclude with \eqref{eq:eind} that $u'(0) = H^{-1}( \psi(\frac{1}{2}) ) $. Using \eqref{eq:737} again we find that $u$ satisfies 
\begin{equation}
u(x) = \frac{1}{2} \frac{\int_{u'(x)}^{H^{-1}( \psi(\frac{1}{2}) )} \frac{z}{\sqrt{H^{-1}( \psi(\frac{1}{2}) ) - z} {(1+z^2)^\frac{5}{4}}} \dz }{\int_{0}^{H^{-1}( \psi(\frac{1}{2}) )} \frac{1}{\sqrt{H^{-1}( \psi(\frac{1}{2}) ) - z} {(1+z^2)^\frac{5}{4}}} \dz}, \quad x \in (0, \frac{1}{2}) .
\end{equation}
Hence $u$ solves on $(0,\frac{1}{2})$ 
\begin{equation}\label{eq:diffeq}
\begin{cases}
u'(x) = J(u(x)) & x \in (0, \frac{1}{2})  \\
u(\frac{1}{2}) = \psi(\frac{1}{2}),
\end{cases}
\end{equation}
where $J$ is the inverse function to 
\begin{equation}
[0, H^{-1}(\psi(\nicefrac{1}{2}))] \ni
r \mapsto \frac{1}{2} \frac{\int_{r}^{H^{-1}( \psi(\frac{1}{2}) )} \frac{z}{\sqrt{H^{-1}( \psi(\frac{1}{2}) ) - z} {(1+z^2)^\frac{5}{4}}} \dz }{\int_{0}^{H^{-1}( \psi(\frac{1}{2}) )} \frac{1}{\sqrt{H^{-1}( \psi(\frac{1}{2}) ) - z} {(1+z^2)^\frac{5}{4}}} \dz},
\end{equation} 
which is well defined because of the positivity of the integrand. The only problem that remains is that maximal solutions to \eqref{eq:diffeq} are not necessarily unique as $J$ is not locally Lipschitz around $\psi(\frac{1}{2})$. It follows however by Lemma \ref{lem:75} that it does have a unique solution that is strictly concave in  $(0,\frac{1}{2}]$. As we have shown strict concavity of each critical point, $u$ is uniquely determined by 
\begin{equation}
\begin{cases}
u'(x) = J(u(x)) & x \in (0, \frac{1}{2}) , \\
u(\frac{1}{2}) = \psi(\frac{1}{2}), u(0) = 0 ,\\
u \; \textrm{strictly concave on $(0, \frac{1}{2}]$}. &  
\end{cases}
\end{equation}

Hence there can exist at most one such $u$ as in \eqref{eq:diffeq}. 
\end{proof}
\begin{proof}[Proof of Theorem \ref{thm:critical}]
Follows now immediately from the previous Lemma and Corollary  \ref{cor:minisym}. 
\end{proof}
\subsection{Compactness of the critical set}
In the rest of this section we discuss compactness of the set of critical points. This will be of high importance later when examining the convergence. Here we do not impose any further assumption on $\psi$ anymore, except for Assumption \ref{ref:ass1}.  
\begin{lemma}[Compactness of critical set]\label{lem:compact}
Suppose that $A < \frac{c_0^2}{4}$ and let 
\begin{equation}
M_{crit}(A) := \{ w \in C_\psi : D\E(w) (v-w ) \geq 0 \; \forall v \in C_\psi ,\;  \E(w) \leq A \}. 
\end{equation}
Then $M_{crit}(A)$ is compact in $W^{2,2}(0,1)$. 
\end{lemma}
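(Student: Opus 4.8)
The plan is to show that $M_{crit}(A)$ is sequentially compact in $W^{2,2}(0,1)$, which suffices since $W^{2,2}(0,1)$ is a metric space. So let $(w_n)_{n=1}^\infty \subset M_{crit}(A)$ be an arbitrary sequence. First I would use the energy bound $\E(w_n) \leq A < \frac{c_0^2}{4}$ together with the standard estimate \eqref{eq:standardW22} from Remark \ref{ref:remeng} to conclude that $(w_n)$ is bounded in $W^{2,2}(0,1)$, hence (after passing to a subsequence, not relabelled) $w_n \rightharpoonup w$ weakly in $W^{2,2}(0,1)$ and, by the compact embedding $W^{2,2}(0,1) \hookrightarrow C^1([0,1])$, strongly in $C^1([0,1])$. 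Since $C_\psi$ is weakly closed in $W^{2,2}(0,1)$ we get $w \in C_\psi$, and by weak lower semicontinuity of $\E$ we get $\E(w) \leq \liminf_n \E(w_n) \leq A$. To see that $w$ is a constrained critical point, fix $v \in C_\psi$; by Proposition \ref{prop:33} and the $C^1$-convergence $w_n \to w$ (which controls $u'$, $u''$ only weakly, so one has to be a little careful) one passes to the limit in $D\E(w_n)(v-w_n) \geq 0$. Actually the cleanest route: the quadratic-in-$u''$ terms in \eqref{eq:eulernaiv} require strong $W^{2,2}$-convergence, so I will first establish the strong convergence and only afterwards pass to the limit in the variational inequality.

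The heart of the proof is therefore upgrading $w_n \rightharpoonup w$ to strong $W^{2,2}$-convergence. The plan is to run essentially the same computation as in the proof of the time-continuity lemma (Lemma ``Time continuity in energy space''): using that $x \mapsto (1+x^2)^{-5/2}$ is Lipschitz and that $\|w_n'\|_\infty, \|w'\|_\infty \leq G^{-1}(\sqrt A)$ by Remark \ref{ref:remeng}, one estimates
\begin{equation}
\frac{1}{(1+G^{-1}(\sqrt A)^2)^{5/2}} \|w_n'' - w''\|_{L^2}^2 \leq \int_0^1 \frac{(w_n'' - w'')^2}{(1+(w')^2)^{5/2}} \dx,
\end{equation}
expand the right-hand side, and regroup so that $\E(w_n) - \E(w)$, a term $D\E(w)(w - w_n)$, and remainder terms that are controlled by $\|w_n' - w'\|_{L^\infty}$ appear. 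The term $\|w_n' - w'\|_{L^\infty} \to 0$ by the compact embedding; the term $D\E(w)(w - w_n) \to 0$ because $w_n \rightharpoonup w$ in $W^{2,2}$ and $D\E(w)$ is a fixed bounded linear functional on $W^{2,2}$ by \eqref{eq:naivgrad}; and $\E(w_n) - \E(w) \to 0$ provided I can show $\E(w_n) \to \E(w)$. That last convergence is the genuine obstacle: weak $W^{2,2}$-convergence only gives $\liminf$. Here I would use that each $w_n$ is a critical point: testing the variational inequality for $w_n$ with $v = w \in C_\psi$ gives $D\E(w_n)(w - w_n) \geq 0$; combined with the expansion above (which at this stage gives $c\|w_n'' - w''\|_{L^2}^2 \leq \E(w_n) - \E(w) + o(1)$) and with $\liminf \E(w_n) \geq \E(w)$, one can bootstrap: if $\limsup \E(w_n) > \E(w)$ along a subsequence one still gets the $W^{2,2}$-bound on that subsequence, but then weak lsc forces $\E(w) \leq \lim \E(w_n)$, and testing the \emph{limit} critical inequality the other way closes the gap, forcing $\E(w_n) \to \E(w)$.

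Let me organise the final argument more carefully to avoid circularity: (i) extract $w_n \rightharpoonup w$ in $W^{2,2}$, $w_n \to w$ in $C^1$, $w \in C_\psi$, $\E(w) \leq A$; (ii) pass to the limit in $D\E(w_n)(v - w_n) \geq 0$ using that for fixed $v$ the map is, by \eqref{eq:eulernaiv}, continuous under $w_n \to w$ in $C^1$ \emph{plus} $w_n'' \rightharpoonup w''$ in $L^2$ — indeed $\int \frac{w_n'' (v-w_n)''}{(1+(w_n')^2)^{5/2}} \to \int \frac{w'' (v-w)''}{(1+(w')^2)^{5/2}}$ since $(1+(w_n')^2)^{-5/2} \to (1+(w')^2)^{-5/2}$ uniformly and $(v - w_n)'' \rightharpoonup (v-w)''$ weakly in $L^2$, while the other integral in \eqref{eq:eulernaiv} converges because $w_n''^2 w_n'$ — hmm, $w_n''^2$ only converges weakly-* as a measure, so I should instead keep $\phi = v - w$ fixed: write the inequality as $D\E(w_n)(v - w) \geq D\E(w_n)(w_n - w)$, pass to the limit on the left (now genuinely only needs $w_n \to w$ in $C^1$ and $w_n'' \rightharpoonup w''$), and show the right side has $\liminf \leq 0$ — which follows once strong convergence is in hand; so this really does force me to do the strong-convergence estimate first, and only then conclude $w \in M_{crit}(A)$. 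The main obstacle is thus packaging steps (ii) and the strong-convergence estimate so they are not circular: the resolution is that the strong-convergence estimate \emph{does not} need $w$ to be known critical, only that $w \in C_\psi$ (to test $w_n$'s inequality with $v=w$) and the uniform Lipschitz/$L^\infty$ bounds — so one proves strong $W^{2,2}$-convergence (and en route $\E(w_n) \to \E(w)$) purely from criticality of the $w_n$, and \emph{then} passes to the limit in the variational inequality to get criticality of $w$.
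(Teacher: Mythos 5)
Your plan is correct and works, but it is a genuinely different route from the paper's. The paper proves compactness by first establishing a uniform $W^{3,\infty}(0,1)$-bound on $M_{crit}(A)$ via a measure-theoretic argument: the variational inequality is represented by a Radon measure $\mu$ supported in $\{w=\psi\}\subset[\delta,1-\delta]$, $\mu((0,1))$ is estimated by a cutoff, and one then derives pointwise bounds on $A_w'$, $A_w$, $w''$, $w'''$; compactness then follows from $W^{3,\infty}$-boundedness plus $W^{2,2}$-closedness. Your argument bypasses all the higher-order regularity and instead runs the same Cauchy-type expansion that appears in the paper's time-continuity and subconvergence lemmas, replacing the FVI term by the criticality of $w_n$. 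This buys a shorter and more elementary proof, while the paper's approach buys the stronger quantitative $W^{3,\infty}$-bound, which is useful information on its own. One expositional slip you should fix: in the displayed estimate you take the base point to be $w$, i.e.\ you divide by $(1+(w')^2)^{5/2}$, and you then extract $\E(w_n)-\E(w)+D\E(w)(w-w_n)$; but $D\E(w)(w-w_n)\to 0$ only helps and weak lower semicontinuity gives $\liminf\E(w_n)\ge\E(w)$, which is the wrong sign, so Version~B does not close and your ``bootstrap'' becomes circular. The clean fix, consistent with your final-paragraph resolution, is to take $w_n$ as the base point, i.e.\ divide by $(1+(w_n')^2)^{5/2}$: the expansion then produces $\E(w)-\E(w_n)+D\E(w_n)(w_n-w)+O(\|w_n'-w'\|_{L^\infty})$, where $D\E(w_n)(w_n-w)\le 0$ by testing $w_n$'s variational inequality with $v=w\in C_\psi$, and $\limsup(\E(w)-\E(w_n))\le 0$ by weak lower semicontinuity, so the $\limsup$ of the left-hand side is $\le 0$ directly; no bootstrap is required, and passing to the limit in $D\E(w_n)(v-w_n)\ge 0$ is then routine.
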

\begin{proof}
Let $A$ be as in the statement. We show that $M_{crit}(A)$ is a bounded set in $W^{3,\infty}(0,1)$ and also closed in $W^{2,2}(0,1) \cap W_0^{1,2}(0,1)$. This immediately implies the compactness. For the boundedness in $W^{3,\infty}(0,1)$ first note that there exists some $\delta >0 $ such that $\psi < 0 $ on $[0, \delta] \cup [1-\delta,1]$. Now fix $w \in M_{crit}(A)$.  By Remark \ref{ref:remeng} one has $||w'||_\infty \leq G^{-1}(\sqrt{A})$ and  $||w''||_{L^2}^2 \leq A ( 1+ G^{-1}(\sqrt{A})^2)^\frac{5}{2}$. Similar to the derivation of \eqref{eq:73} we can conclude that for $A_w := \frac{w''}{(1+ w'^2)^\frac{5}{4}}$
\begin{equation}
2\int_0^1 \frac{A_w' \phi'}{(1+ w'^2)^\frac{5}{4}} \leq 0 \quad \forall \phi \in C_0^\infty(0,1) .
 \end{equation}
 By \cite[Lemma 37.2]{Tartar} there exists a Radon measure $\mu$ on $(0,1)$ which is by \eqref{eq:72} supported on $\{ u = \psi \}$ such that 
 \begin{equation}\label{eq:746}
 2\int_0^1 \frac{A_w' \phi'}{(1+ w'^2)^\frac{5}{4}} = \int_0^1 \phi \;  d\mu \quad \forall \phi \in C_0^\infty(0,1).
 \end{equation}
 By \eqref{eq:eulernaiv} we also find 
 \begin{equation}\label{eq:naiveulermass}
 2 \int_0^1 \frac{w'' \phi''}{ (1+w'^2)^\frac{5}{2} } - 5 \int_0^1 \frac{w''^2 w' \phi'}{(1+ w'^2)^\frac{7}{2}} = \int_0^1 \phi \; d\mu  \quad \forall \phi \in C_0^\infty(0,1).  
 \end{equation}
 Note that since $w$ is nonnegative by Remark \ref{rem:posi} one has $\{ u= \psi \} \subset [\delta, 1- \delta]$ and hence $\mu$ is finite.  Moreover one can plug into \eqref{eq:naiveulermass} a function $\phi \in C_0^\infty(0,1)$ such that $\phi = 1 $ on $[\delta, 1-\delta]$, $0 \leq \phi \leq 1$ and $|| \phi' ||_\infty < \frac{2}{\delta} $ as well as $||\phi''||_\infty < \frac{2}{\delta^2}$ to find
 \begin{equation}\label{eq:mu01}
 \mu((0,1)) \leq \frac{10}{\delta} \int_0^1 \frac{w''^2|w'|}{(1+w'^2)^\frac{7}{2}}  + \frac{4}{\delta^2} \int_0^1 \frac{|w''|}{(1+ w'^2)^\frac{5}{2}} \leq \frac{10A}{\delta} + \frac{4\sqrt{A}}{\delta^2}. 
 \end{equation}
 Going back to \eqref{eq:746} we obtain 
 \begin{equation}
 \int_0^1  \left( \frac{2A_w' }{(1+ w'^2)^\frac{5}{4}}- m \right) \phi'  = 0 \quad \forall \phi \in C_0^\infty(0,1),
 \end{equation}
 where $m(t) := \mu((t,1))$ is a function bounded by $\mu(0,1)$. We conclude
 \begin{equation}
 \frac{2A_w' }{(1+ w'^2)^\frac{5}{4}}- m  \equiv \mathrm{const.} = \int_0^1\left(\frac{2A_w' }{(1+ w'^2)^\frac{5}{4}}- m \right) .
\end{equation}  
This implies 
\begin{equation}\label{eq:Awp}
|A_w'| \leq \frac{1}{2} (1 + G^{-1}(\sqrt{A})^2)^\frac{5}{4} \left ( 2 \mu(0,1) + \left\vert \int_0^1 \frac{2A_w'}{(1+w'^2)^\frac{5}{4}} \right\vert \right).  
\end{equation}
By Lemma \ref{lem:critii} we can integrate by parts without boundary terms and obtain
\begin{equation}
\left\vert \int_0^1 \frac{A_w'}{(1+w'^2)^\frac{5}{4}} \right\vert = \left\vert - \frac{5}{2}\int_0^1 A_w^2 \frac{w'}{(1+ w'^2)} \right\vert \leq \frac{5}{2}\E(w) \leq\frac{5}{2} A.
\end{equation}
Together with this \eqref{eq:Awp} and \eqref{eq:mu01} we obtain 
\begin{equation}\label{eq:Awpp}
|A_w'| \leq \frac{1}{2} (1 + G^{-1}(\sqrt{A})^2)^\frac{5}{4} \left ( 2\left( \frac{10A}{\delta} + \frac{4\sqrt{A}}{\delta^2} \right)  + 5A \right) =: D(A,\delta).
\end{equation}
Note that $\delta$ is chosen independently of $w$. This also implies that for all $x \in (0,1)$ one has
\begin{equation}\label{eq:aq}
|A_w(x)| \leq |A_w(0)| + \int_0^x|A_w'(s)| ds \leq D(A,\delta)
\end{equation} 
as $A_w(0) = w''(0) = 0$. Since $||w'||_\infty < G^{-1}(\sqrt{A})$ we obtain with the explicit formula for $A_w$ that 
\begin{equation}
|w''| \leq   (1 + G^{-1}(\sqrt{A})^2)^\frac{5}{4} D(A,\delta) .
\end{equation}
Finally note that 
\begin{equation}
A_w' = \frac{w'''}{(1+ w'^2)^\frac{5}{4}} - \frac{5}{2} A_w^2 \frac{w'}{(1+ w'^2)}.
\end{equation}
Combining this with \eqref{eq:Awpp} and \eqref{eq:aq} we get 
\begin{equation}
|w'''| \leq (1 + G^{-1}(\sqrt{A})^2)^\frac{5}{4} ( D(A,\delta) + \frac{5}{4} D(A,\delta)^2 ). 
\end{equation}
We have bounded $||w''||_\infty $ and $||w'''||_\infty$ with bounds that depend only on $A$ and $\psi$. This implies that $M_{crit}(A)$ is bounded in $W^{3,\infty}(0,1)$. This makes it precompact in $W^{2,2}(0,1)$. The closedness of $M_{crit}(A)$ in  $W^{2,2}(0,1)\cap W_0^{1,2}(0,1)$ follows by an easy computation using \eqref{eq:eulernaiv}.  
\end{proof}
\section{Convergence Behavior}
In this section we want to examine whether the flow converges in the energy space $W^{2,2}(0,1)$. For large obstacles the absence of critical points already shows  nonconvergence, cf. \cite[Corollary 5.22]{Marius2}. For small obstacles and small initial energies however, convergence is true.

\begin{lemma} [$W^{2,2}$-subconvergence disregarding small sets]\label{lem:epsex} 
Let $u_0 \in C_\psi$ be such that $\E(u_0) < \frac{c_0^2}{4}$. Then for each $\epsilon >0$ there exists a set $B \subset [0,\infty)$ with $|B|< \epsilon$ such that $u_{\mid_{[0,\infty) \setminus B}}: {[0,\infty) \setminus B} \rightarrow W^{2,2}(0,1)$ is $W^{2,2}(0,1)$-subconvergent (in the sense of Definition \ref{def:210}) to points in $M_{crit}$, where $M_{crit}$ is defined as in \eqref{eq:mcrit}.
\end{lemma}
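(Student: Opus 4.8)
The plan is to push the times where the velocity $\dot u(t)$ fails to be small into the complement of a set of arbitrarily small measure, and then to run a standard compactness argument at the remaining (large) times.

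First I would use that $\dot u\in L^2((0,\infty),L^2(0,1))$ by Definition \ref{def:fvigrad}: setting $f(t):=\|\dot u(t)\|_{L^2(0,1)}^2\in L^1(0,\infty)$, the tails $\int_t^\infty f$ tend to $0$ as $t\to\infty$. Given $\epsilon>0$, I would pick $0=t_0<t_1<t_2<\cdots\to\infty$ with $\int_{t_j}^\infty f\le\tfrac{\epsilon}{2}4^{-j}$ and put
\begin{equation*}
B:=N\cup\bigcup_{j\ge1}\Big\{t\in(t_j,t_{j+1}]:f(t)>2^{-j}\Big\},
\end{equation*}
where $N$ is the Lebesgue-null set of $t>0$ at which the $FVI$ \eqref{eq:endec} fails, or $u(t)$ is not in $C_\psi\cap W^{3,\infty}(0,1)$ with Navier boundary conditions, or $\mathcal E(u(t))\ne\phi(t)$ (with $\phi$ the nonincreasing representative of $t\mapsto\mathcal E(u(t))$); that $|N|=0$ is by Lemma \ref{lem:exreg} and Definition \ref{def:fvigrad}. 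Chebyshev's inequality gives $|B|\le\sum_{j\ge1}2^j\tfrac{\epsilon}{2}4^{-j}=\tfrac{\epsilon}{2}<\epsilon$, and for $t\in(t_j,t_{j+1}]\setminus B$ one has $f(t)\le2^{-j}$, hence $\|\dot u(t)\|_{L^2}\to0$ as $t\to\infty$ within $[0,\infty)\setminus B$.

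Next, given any $\theta_n\to\infty$ with $\theta_n\in[0,\infty)\setminus B$, I would write $w_n:=u(\theta_n)$ and show that $(w_n)$ is bounded in $W^{3,\infty}(0,1)$: it is bounded in $W^{2,2}(0,1)$ by Theorem \ref{thm:existence}, and proceeding as in the proof of Lemma \ref{lem:w3infbds} resp. Lemma \ref{lem:exreg} one gets $\|w_n'''\|_{L^\infty}\le C+D\|\dot u(\theta_n)\|_{L^2}$ with $C,D$ independent of $n$, which together with $w_n''(0)=0$ also controls $\|w_n''\|_{L^\infty}$. By the compact embedding $W^{3,\infty}(0,1)\hookrightarrow\hookrightarrow C^2([0,1])$, a subsequence $w_{k_n}$ converges in $C^2([0,1])$ to some $u_\infty$, hence also in $W^{2,2}(0,1)$; since $C_\psi$ is closed, $u_\infty\in C_\psi$. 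To see $u_\infty\in M_{crit}$ I would pass to the limit in \eqref{eq:endec} at $\theta_{k_n}$: for fixed $v\in C_\psi$ the term $(\dot u(\theta_{k_n}),v-w_{k_n})$ vanishes in the limit by Cauchy--Schwarz, since $\|\dot u(\theta_{k_n})\|_{L^2}\to0$ and $\|v-w_{k_n}\|_{L^2}$ is bounded, while $D\mathcal E(w_{k_n})(v-w_{k_n})\to D\mathcal E(u_\infty)(v-u_\infty)$ by $C^2$-convergence of $w_{k_n}$ exactly as in the proof of Corollary \ref{cor:existiert}; hence $D\mathcal E(u_\infty)(v-u_\infty)\ge0$ for all $v\in C_\psi$. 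Moreover $\mathcal E$ is continuous along $W^{2,2}$-convergent sequences, so $\mathcal E(u_\infty)=\lim_n\mathcal E(w_{k_n})=\lim_n\phi(\theta_{k_n})\le\phi(0)=\mathcal E(u_0)$ because $\theta_{k_n}\notin N$ and $\phi$ is nonincreasing. Thus $u_\infty\in M_{crit}$, as Definition \ref{def:210} requires.

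I expect the main obstacle to be the uniform $W^{3,\infty}$-bound at the (almost every) times $\theta_n$ — that is, upgrading the cheap $C^1$-subconvergence that a soft argument would give to genuine $W^{2,2}$-convergence. This rests on recovering the Euler--Lagrange structure of the $FVI$ at a fixed time, namely $-2\big((1+u'^2)^{-5/4}A_u'\big)'=\mu-\dot u$ for a finite nonnegative Radon measure $\mu$ supported on $\{u=\psi\}$, together with a uniform mass bound $\mu((0,1))\le C$ obtained as in the proof of Lemma \ref{lem:compact} (using that $\{u=\psi\}\subset[\delta,1-\delta]$ for a $\delta$ depending only on $\psi$ and the uniform $W^{2,2}$-bound of the flow), the extra $\dot u$-term being absorbed by its (small) $L^2$-norm. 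The measure-theoretic construction of $B$ and the passage to the limit in the $FVI$ are otherwise routine.
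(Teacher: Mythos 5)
Your proposal is correct, but it takes a genuinely different route to the key $W^{2,2}$-compactness step. The paper's proof shows the chosen subsequence $(u(\theta_{l_n}))$ is \emph{Cauchy} in $W^{2,2}(0,1)$: plugging $v=u(\theta_{l_m})$ into the $FVI$ at time $\theta_{l_n}$ and exploiting the Lipschitz continuity of $s\mapsto(1+s^2)^{-5/2}$ gives
\begin{equation}
\|u(\theta_{l_n})''-u(\theta_{l_m})''\|_{L^2}^2\leq C_0\bigl(\phi(\theta_{l_n})-\phi(\theta_{l_m})+(D+\|\dot u(\theta_{l_m})\|_{L^2})\,\|u(\theta_{l_n})'-u(\theta_{l_m})'\|_{L^\infty}\bigr),
\end{equation}
whose right-hand side is Cauchy because $\phi$ is monotone and bounded, $u(\theta_{l_n})'$ converges uniformly, and $\|\dot u(\theta_{l_m})\|_{L^2}$ is uniformly small along the subsequence. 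You instead extract the quantitative version of Lemma \ref{lem:exreg}, namely $\|u(\theta_n)'''\|_{L^\infty}\leq C+D\|\dot u(\theta_n)\|_{L^2}$ with constants depending only on $\psi$ and $\mathcal E(u_0)$ (which the proof of Lemma \ref{lem:w3infbds}, transplanted as indicated in Lemma \ref{lem:exreg}, indeed supplies), and then use the compact embedding $W^{3,\infty}(0,1)\hookrightarrow\hookrightarrow C^2([0,1])$. Your variant is closer in spirit to the proof of Lemma \ref{lem:compact}, essentially adding a small $\dot u$-perturbation, and it yields the stronger conclusion of $C^2$-subconvergence and a $W^{3,\infty}$-bound along the subsequence; the paper's Cauchy argument, by contrast, avoids re-running the measure-theoretic Euler--Lagrange bootstrap and relies only on the $FVI$ plus energy monotonicity. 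The constructions of $B$ differ only superficially --- your tail-splitting of $\int_{t_j}^\infty\|\dot u\|_{L^2}^2$ versus the paper's sets $Q_n$ of times where $\mathcal E(u(t))\neq\phi(t)$, $FVI$ fails, or $\|\dot u(t)\|_{L^2}>1/n$, cut off at $k_n$ --- both being Chebyshev estimates designed to force $\|\dot u(\theta_n)\|_{L^2}\to0$ and Euler--Lagrange regularity at the surviving times. The passage to the limit in the $FVI$ and the bound $\mathcal E(u_\infty)\leq\mathcal E(u_0)$ are then essentially identical in both proofs.
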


\begin{proof}
Let $\epsilon > 0 $ and let $\phi$ be a nonincreasing function that coincides with $\mathcal{E} \circ u$ almost everywhere. Let $M$ be as in the statement. Define for each $n \in \mathbb{N}$ the set 
\begin{equation}
Q_n := \left\lbrace t  > 0 : \mathcal{E}(u(t)) \neq \phi(t) \; \textrm{or FVI is not true at $t$ or} \;  ||\dot{u}(t)||_{L^2} > \frac{1}{n} \right\rbrace. 
\end{equation}
Note that by Chebyshov's inequality, $Q_n$ has finite measure for all $n$ and therefore there exists some $k_n \in \mathbb{N}$ such that $|Q_n \cap [k_n , \infty)| < \frac{\epsilon}{2^n}$. Without loss of generality we can also achieve that $k_{n+ 1} > k_n$ for all $n \in \mathbb{N}$.  Therefore $| \bigcup_{ n = 1}^\infty (  Q_n \cap [k_n , \infty))  | < \epsilon$. Define $B := \bigcup_{ n = 1}^\infty ( Q_n \cap [k_n , \infty)) $. Suppose now that $(\theta_n)_{n = 1}^\infty$ is an arbitrary sequence satisfying $(\theta_n) \subset [0,\infty) \setminus B$ and $\theta_n \rightarrow \infty$. 

By Remark \ref{ref:remeng} $(u(\theta_n))_{n= 1}^\infty$ is bounded in $W^{2,2}(0,1)$ and hence there exists a 
subsequence $(\theta_{l_n})_{n = 1}^\infty$ and $u_\infty \in C_\psi$ such that $ \theta_{l_n} > k_n $ for all $n \in \mathbb{N}$, $u(\theta_{l_n}) \rightarrow u_\infty $ in $W^{1,\infty}(0,1)$ and $u(\theta_{l_n}) \rightharpoonup u_\infty$ in $W^{2,2}(0,1)$. It remains to show that this convergence is strong in $W^{2,2}(0,1)$ and $u_\infty \in M_{crit}$. Note that $\theta_{l_n} > k_n$ implies that $\theta_{l_n} \in Q_n^C$ for all $n \in \mathbb{N}$. We verify that $(u(\theta_{l_n}) )_{n = 1}^\infty$ is a Cauchy sequence in $W^{2,2}(0,1) \cap W_0^{1,2}(0,1)$. Using the definition of $Q_n$, the $FVI$ equation and again the Lipschitz continuity of $x \mapsto \frac{1}{(1+x^2)^\frac{5}{2}}$
we can compute
\begin{align*}
& \frac{1}{( 1 + G^{-1} (\sqrt{\E(u_0)})^2) ^\frac{5}{2}}  \int_0^1 ( u(\theta_{l_n})'' - u(\theta_{l_m}) )'')^2 \dx  \leq \int_0^1 \frac{(u(\theta_{l_n})'' - u(\theta_{l_m})'')^2}{(1+ u(\theta_{l_m})'^2)^\frac{5}{2}} \dx
\\ & = \int_0^1 \frac{u(\theta_{l_n})''^2}{(1+ u(\theta_{l_m})'^2)^\frac{5}{2}} \dx- \int_0^1 \frac{u(\theta_{l_m})''^2}{(1+ u(\theta_{l_m})'^2)^\frac{5}{2}}  \dx  \\ & \quad +   2 \int_0^1 \frac{u(\theta_{l_m})'' ( u(\theta_{l_m})''- u(\theta_{l_n})'')}{(1 + u(\theta_{l_m})'^2)^\frac{5}{2}} \dx
\\ & = \int_0^1 \frac{u(\theta_{l_n})''^2}{(1+u(\theta_{l_n})'^2)^\frac{5}{2}} \dx + \int_0^1 u(\theta_{l_n})''^2 \left[ \frac{1}{(1 + u(\theta_{l_m})'^2)^\frac{5}{2}} - \frac{1}{(1 + u(\theta_{l_n})'^2)^\frac{5}{2}}  \right] \\ & \quad  -  \int_0^1 \frac{u(\theta_{l_m})''^2}{(1+ u(\theta_{l_m})'^2)^\frac{5}{2}} \dx +  D\E(u(\theta_{l_m}))(u(\theta_{l_m}) - u(\theta_{l_n}))  \\ & \quad + 5 \int_0^1 \frac{u(\theta_{l_m})''^2 u(\theta_{l_m})'}{(1 +u(\theta_{l_m})'^2)^\frac{7}{2}} (u(\theta_{l_m})' - u(\theta_{l_n})' ) \dx
\\ & \leq \E(u(\theta_{l_n})) - \E(u(\theta_{l_m}))   \\ & \quad  + \frac{5}{2} \E(u_0) ( 1 + G^{-1}(\sqrt{\E(u_0)}) ^2)^\frac{5}{2} ||u(\theta_{l_m})' - u(\theta_{l_n})'||_{L^\infty} \\ &   \quad + ( \dot{u}(\theta_{l_m}), u(\theta_{l_n}) - u(\theta_{l_m}) ) + 5 \E(u(\theta_{l_m})) || u(\theta_{l_m})' - u(\theta_{l_n})'||_{L^\infty}
\\ & = \phi(\theta_{l_n}) - \phi(\theta_{l_m}) + (D+ || \dot{u}(\theta_{l_m}) ||_{L^2}) || u(\theta_{l_m})' - u(\theta_{l_n})'||_{L^\infty}
\\ & \leq \phi(\theta_{l_n}) - \phi(\theta_{l_m}) + (D+ 1) || u(\theta_{l_m})' - u(\theta_{l_n})'||_{L^\infty},
\end{align*}
for some fixed constant $D> 0$. 
Since $\phi$ is nonincreasing, $(\phi(\theta_{l_n}))_{n = 1}^\infty$ is a Cauchy sequence and hence the Cauchy property of $(u(\theta_{l_n}) )_{n = 1}^\infty \subset W^{2,2}(0,1)$ is shown. Hence $u(\theta_{l_n}) \rightarrow u_\infty$ in $W^{2,2}(0,1)$. Moreover 
\begin{equation}
(\dot{u}(\theta_{l_n}), v - u(\theta_{l_n}))_{L^2} + D\E(u(\theta_{l_n}))(v- u(\theta_{l_n})) \geq 0 .
\end{equation}
The fact that $||\dot{u}(\theta_{l_n})||_{L^2} < \frac{1}{n}$ since $\theta_{l_n} \in Q_n^C$ implies that 
\begin{equation}\label{eq:prime0}
(\dot{u}(\theta_{l_n}), v - u(\theta_{l_n}))_{L^2} \rightarrow 0. 
\end{equation}
Here we also used that $u \in L^\infty((0, \infty), W^{2,2}(0,1)) \subset L^\infty((0, \infty), L^2(0,1))$. By a direct computation that uses the just derived $W^{2,2}$-convergence it is also easy to see that 
\begin{equation}
 D\E(u(\theta_{l_n}))(v- u(\theta_{l_n}))  \rightarrow D\E(u_\infty) (v- u_ \infty). 
\end{equation}
We conclude with this and \eqref{eq:prime0} that $u_\infty \in M_{crit}.$  
\end{proof}
So far we have proved a $W^{2,2}$-subconvergence result for $FVI$ gradient flows with an exceptional set $B$ of artbitrary small measure. The next step is now to use the uniform Hölder continuity of $FVI$ gradient flows in $L^2(0,1)$ to get rid of the exceptional set. The topology however changes for the worse but can be improved again in the rest of the section. 
 \begin{lemma}[Full $L^2$-subconvegence to critical points]\label{lem:fullsub}
 Let $u_0 \in C_\psi$ be such that $\E(u_0) < \frac{c_0^2}{4}$ and let $u$ be an FVI gradient flow  starting at $u_0$. Let $M_{crit}$ be as in \eqref{eq:mcrit}. Then $u : [0,\infty) \rightarrow L^2(0,1)$ is fully $L^2$-subconvergent to points in $M_{crit}$. 
 \end{lemma}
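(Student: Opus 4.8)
The plan is to combine the ``$W^{2,2}$-subconvergence up to a set of arbitrarily small measure'' supplied by Lemma~\ref{lem:epsex} with the uniform $L^2$-Hölder continuity in time of Proposition~\ref{prop:holder}, and then to upgrade the resulting bound on the distance to $M_{crit}$ into genuine $L^2$-convergence by invoking compactness of the limit candidate set. As always $u$ denotes the $C([0,\infty),C^1([0,1]))$-representative. Since $M_{crit}$ from \eqref{eq:mcrit} equals $M_{crit}(\E(u_0))$ with $\E(u_0)<\tfrac{c_0^2}{4}$, Lemma~\ref{lem:compact} shows $M_{crit}$ is compact in $W^{2,2}(0,1)$, hence compact in $L^2(0,1)$ (and, in particular, nonempty).

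First I would extract from Lemma~\ref{lem:epsex} the following: for every $k\in\mathbb{N}$ there exist a set $B_k\subset[0,\infty)$ with $|B_k|<\tfrac1{k^2}$ and a time $T_k>0$ such that $\dist_{L^2}(u(t),M_{crit})<\tfrac1k$ whenever $t\ge T_k$ and $t\notin B_k$. Indeed, let $B_k$ be the set provided by Lemma~\ref{lem:epsex} for $\epsilon=\tfrac1{k^2}$. If no such $T_k$ existed there would be $t_n\to\infty$ with $t_n\in[0,\infty)\setminus B_k$ and $\dist_{L^2}(u(t_n),M_{crit})\ge\tfrac1k$; but by Definition~\ref{def:210} a subsequence of $(u(t_n))_n$ converges in $W^{2,2}(0,1)$, hence in $L^2(0,1)$, to a point of $M_{crit}$, a contradiction.

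Next, given an arbitrary sequence $\theta_n\to\infty$, I would produce a subsequence along which the $L^2$-distance to $M_{crit}$ tends to $0$. For each $k\ge2$ choose an index $n_k$, strictly increasing in $k$, with $\theta_{n_k}\ge T_k$. Because $|B_k\cap[\theta_{n_k},\theta_{n_k}+\tfrac1k]|\le|B_k|<\tfrac1{k^2}<\tfrac1k$, the set $[\theta_{n_k},\theta_{n_k}+\tfrac1k]\setminus B_k$ has positive measure, so I may pick a point $s_k$ in it. Then $s_k\ge T_k$ and $s_k\notin B_k$, whence $\dist_{L^2}(u(s_k),M_{crit})<\tfrac1k$, while Proposition~\ref{prop:holder} gives $\|u(\theta_{n_k})-u(s_k)\|_{L^2}\le D\sqrt{|s_k-\theta_{n_k}|}\le D/\sqrt k$ for the constant $D$ associated with $u$. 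Therefore
\begin{equation}
\dist_{L^2}(u(\theta_{n_k}),M_{crit})\le\|u(\theta_{n_k})-u(s_k)\|_{L^2}+\dist_{L^2}(u(s_k),M_{crit})\le\frac{D}{\sqrt k}+\frac1k,
\end{equation}
which tends to $0$ as $k\to\infty$.

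Finally I would upgrade this to convergence. Choosing $w_k\in M_{crit}$ with $\|u(\theta_{n_k})-w_k\|_{L^2}<\dist_{L^2}(u(\theta_{n_k}),M_{crit})+\tfrac1k$ and using compactness of $M_{crit}$ in $L^2(0,1)$, a subsequence $(w_{k_j})_j$ converges in $L^2(0,1)$ to some $w_\infty\in M_{crit}$; then $u(\theta_{n_{k_j}})\to w_\infty$ in $L^2(0,1)$ with $w_\infty\in M_{crit}$, so $(\theta_{n_{k_j}})_j$ is the desired subsequence of $(\theta_n)_n$. Since $(\theta_n)_n$ was arbitrary, this is exactly full $L^2$-subconvergence to points in $M_{crit}$ in the sense of Definition~\ref{def:210}. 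The one point requiring care is the interval argument linking a prescribed time $\theta_{n_k}$ to a ``good'' time $s_k\notin B_k$: it works precisely because the exceptional set $B_k$ has measure strictly smaller than the length of the window in which we search, and the price paid for moving from $\theta_{n_k}$ to $s_k$ is controlled, via the $\tfrac12$-Hölder bound, by $D/\sqrt k\to0$.
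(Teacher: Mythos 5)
Your proof is correct, and it relies on the same three ingredients as the paper's argument: the ``$W^{2,2}$-subconvergence off small sets'' of Lemma~\ref{lem:epsex}, the uniform $\tfrac12$-H\"older continuity in $L^2$ from Proposition~\ref{prop:holder}, and the compactness of $M_{crit}$ from Lemma~\ref{lem:compact}. The organization, however, is genuinely different. The paper builds an iterative diagonal construction: it fixes $\epsilon_q=2^{1-q}$, produces \emph{nested} subsequences $(t_{l_n^q})_n$ together with critical points $u_\infty^q\in M_{crit}$ satisfying $\|u(t_{l_n^q})-u_\infty^q\|_{L^2}\le 2D\sqrt{\epsilon_q}$ for $n$ large, and only then invokes compactness of $M_{crit}$ to extract a convergent subsequence of $(u_\infty^q)_q$, finishing with a diagonal choice of indices. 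You instead distill Lemma~\ref{lem:epsex} into the cleaner intermediate statement that for each $k$ there is a threshold time $T_k$ beyond which $\dist_{L^2}(u(t),M_{crit})<\tfrac1k$ for all $t\notin B_k$; the contradiction argument you give for this is valid, since the negation yields a sequence $t_n\to\infty$ in $[0,\infty)\setminus B_k$ along which a $W^{2,2}$-subconvergent subsequence would force the distance below $\tfrac1k$. From there the window argument with $s_k\in[\theta_{n_k},\theta_{n_k}+\tfrac1k]\setminus B_k$ and the H\"older bound give $\dist_{L^2}(u(\theta_{n_k}),M_{crit})\to 0$ directly, without any nesting of subsequences, and compactness is invoked once at the end to extract the limit $w_\infty$. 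The benefit of your formulation is readability: the bookkeeping collapses to a single quantitative bound on the distance function, whereas the paper tracks two layers of indices $(l_n^q,n_q)$. The cost is negligible. One small remark: your parenthetical that compactness of $M_{crit}$ implies nonemptiness is not literally a consequence of compactness alone; nonemptiness follows instead from the energy bound (Remark~\ref{ref:remeng} gives a minimizer, hence a critical point) or, alternatively, from Lemma~\ref{lem:epsex} itself, which already furnishes elements of $M_{crit}$ as limits.
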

\begin{proof}
Let $u_0$,$u$ be as in the statement. We start by showing full $L^2(0,1)$-subconvergence. For this let $t_n \rightarrow \infty$. Now set $\epsilon_1 := 1 > 0 $ and apply Lemma \ref{lem:epsex} with $\epsilon = \epsilon_1$. This yields a set $B= B(\epsilon_1)$ such that $|B(\epsilon_1)| <\epsilon_1$ and  $u_{\mid_{[0, \infty) \setminus B(\epsilon_1)}} $ $W^{2,2}$-subconverges to points in $M_{crit}$. Note that for all $n \in \mathbb{N}$ there exists $s_n^1 \in (t_n - \epsilon_1 , t_n + \epsilon_1)$ such that $s_n^1 \in [0,\infty) \setminus B(\epsilon_1)$, since the contrapositive of this statement contradicts $|B(\epsilon_1)|< \epsilon_1$. By Lemma \ref{lem:epsex} there exists a subsequence $(l_n^1)_{n = 1}^\infty \subset \mathbb{N}$ such that  $u(s_{l_n^1}^1)$ converges in $W^{2,2}$ to some $u_\infty^1 \in M_{crit}$. Note that by Proposition \ref{prop:holder} one there exists $D> 0$ such that 
\begin{align}
\limsup_{n \rightarrow \infty} &||u(t_{l_n^1}) - u_\infty^1 ||_{L^2(0,1)} \leq  \limsup_{n \rightarrow \infty} (||u(t_{l_n^1})- u(s_{l_n^1}^1)||_{L^2} + ||u(s_{l_n^1}^1) - u_\infty^1 ||_{L^2})
\\ & =  \limsup_{n \rightarrow \infty} ||u(t_{l_n^1})- u(s_{l_n^1}^1)||_{L^2}
 \leq \limsup_{n \rightarrow \infty} D \sqrt{|t_{l_n^1}- s_{l_n^1}^1 | } \leq D \sqrt{\epsilon_1}.
\end{align} 
Hence there exists some $n_1 \in \mathbb{N}$ such that for all $n \geq n_1$ 
\begin{equation}
||u(t_{l_n^1}) - u_\infty^1 ||_{L^2(0,1)}  \leq 2D\sqrt{\epsilon_1}.
\end{equation}
We start an iterartive procedure by repeating the process starting with the sequence $(t_{l_n^1})_{n \geq n_1}$ and for $\epsilon_2 := \frac{\epsilon_1}{2}$, more precisely: We again choose a measurable set $B(\epsilon_2)$ of measure smaller than $\epsilon_2$ such that $u_{[0,\infty) \setminus B(\epsilon_2) }$ $W^{2,2}$-subconverges to points in $M_{crit}$. We again observe that for all $n >n_1$ there exists $s_n^2 \in (t_{l_n^1} - \epsilon_2 , t_{l_n^1}  + \epsilon_2 ) \cap [0,\infty) \setminus B(\epsilon_2)$. Therefore we can find a subsequence of $(s_n^2)_{n \geq n_1}$ along which $u$ converges to some $u_\infty^2 \in M_{crit}$. As above this yields now a subsequence $(t_{l_n^2})_{n \geq n_1}$ of $(t_{l_n^1})_{n \geq n_1}$ such that 
\begin{equation}
\limsup_{n \rightarrow \infty} ||u(t_{l_n^2}) - u_\infty^2 ||_{L^2(0,1)} < D \sqrt{\epsilon_2}
\end{equation}
In particular we can choose $n_2 \geq n_1$ such that for all $n \geq n_2$ 
 \begin{equation}
 ||u(t_{l_n^2}) - u_\infty^2 ||_{L^2(0,1)} < 2 D \sqrt{\epsilon_2}.
 \end{equation}
 Keeping going, we can find for all $k \in \mathbb{N}$ nested subsequences 
 $(t_{l_n^k})_{n > n_k } \subset (t_{l_n^{k-1}})_{n \geq n_{k-1} } \subset ... \subset (t_n)_{n \geq 1}$ and $ \{u_\infty^1, ... , u_{\infty}^k \} \subset M_{crit}$ such that for all $q \in \{1,...,k\}$ and $n > n_q$ one has
  \begin{equation}\label{eq:8212}
 ||u(t_{l_n^q}) - u_\infty^q ||_{L^2(0,1)} < 2 D \sqrt{\epsilon_q} = 2D \frac{1}{\sqrt{2}^q} \sqrt{\epsilon_1}.
 \end{equation}
Because of the compactness of $M_{crit}$ by Lemma \ref{lem:compact} we obtain that $(u_\infty^{q})_{q=1}^\infty$ has a $W^{2,2}$-covergent subsequence, denoted by $(u_\infty^{q_m})_{m = 1}^\infty$. We denote the limit of this sequence simply by $u_\infty \in M_{crit}$. 
 The subsequence we consider now is $(t_{l_{n_{q_m}}^{q_m}})_{m \in \mathbb{N}}$. For the sake of simplicity of notation we define $a_m := t_{l_{n_{q_m}}^{q_m}}$. Now we observe by \eqref{eq:8212} 
 \begin{equation*}
 ||u(a_m) - u_\infty||_{L^2} \leq || u(a_m) - u_\infty^{q_m}||_{L^2} + || u_\infty^{q_m} - u_\infty||_{L^2} \leq 2D  \sqrt{\epsilon_1} \frac{1}{\sqrt{2}^{q_m}} + || u_\infty^{q_m} - u_\infty||_{L^2}.
 \end{equation*}
 Now both terms on the right hand side of this inequality tend to zero as $m \rightarrow \infty$ and thus $u(a_m) \rightarrow u_\infty$ in $L^2(0,1)$. As $(a_m)_{m=1}^\infty$ was a subsequence of $(t_n)_{n = 1}^\infty$ the claim follows.  
\end{proof}

\begin{proof}[Proof of Theorem \ref{thm:convergence}]
Let $(t_n)_{n = 1}^\infty$ be an arbitrary sequence that diverges to infinity. By Lemma \ref{lem:fullsub} there exists a subsequence which we call again  $t_{n}$ and some $u_\infty \in M_{crit}$ such that $u(t_{n}) \rightarrow u_\infty$ in $L^2(0,1)$. By Theorem \ref{thm:existence}, $u(t_{n})$ is bounded in $W^{2,2}(0,1)$ hence we can choose a further subsequence which we do not relabel such that $u(t_n) \rightharpoonup u_\infty$ weakly in $W^{2,2}(0,1)$. By compact embedding we obtain $u(t_n) \rightarrow u_\infty$ in $C^1([0,1])$ and hence the claim follows. 
\end{proof}
\begin{proof}[Proof of Theorem \ref{thm:Convconve}]
Let $u_0$,$u$ be as in the statement. By Corollary \ref{cor:sympres} one has $u(t) ( 1- \cdot) = u(t)$ for all $t>0$. Let now $w \in C_\psi$ be the unique symmetric critical point in $C_\psi$ (cf. Theorem \ref{thm:critical}). By Theorem \ref{thm:critical}, $w$ is a minimizer of $\E$ in $C_\psi$. Now let $t_n \rightarrow \infty$ be a sequence. Observe that by Theorem \ref{thm:convergence} there exists a subsequence $t_{l_n} \rightarrow \infty$ such that $u(t_{l_n})$ converges in $W^{1,\infty}(0,1)$ to some critical point $u_\infty \in C_\psi$. Now since $u(t_{l_n})(1- \cdot) = u(t_{l_n}) $ for all $n \in \mathbb{N}$ one obtains by the $L^2$-convergence that $u_\infty( 1- \cdot) = u_\infty$. From this follows that $u_\infty = w$ by Theorem \ref{thm:critical}. By the Urysohn property of $L^2$-convergence we obtain that $u(t_n) \rightarrow w$ in $L^2(0,1)$. As the sequence $(t_n)_{n = 1}^\infty$ was arbitrary we obtain that $u(t) \rightarrow w$ as $t \rightarrow \infty$. 
\end{proof}


\section{Open problems and perspectives}

In this final section we summarize some problems that could be interesting for future research. We also discuss some ways to approach them.

\begin{problem}[Optimal energy dissipation]
The article shows that energy-dissipating $FVI$ gradient flows can also be constructed even if the energy $\mathcal{E}$ is not $L^2$-semiconvex. It is however unclear whether the energy dissipation rate is optimal. For $L^2$-semiconvex functionals the dissipation rate will be optimal --- in the sense of $EDI$-gradient flows in optimal transport theory, cf. \cite[Definition 4.3]{usersguide}. Even more general --- if $\mathcal{E}$ can be written as a sum of a convex and a Frechét differentiable functional then each $FVI$ gradient flow is an $EDI$-gradient flow. This can be shown following the lines of \cite[Section 2.3]{Marius2}. It is vital for the theory to understand what role the convexity assumption plays for the energy dissipation.
\end{problem}

\begin{problem}[Energy threshold and geometry]
We have shown existence of the flow only below the energy threshold $\mathcal{E}(u_0) < \frac{c_0^2}{4}$. The reason for this threshold is that below one can obtain uniform control of $||\partial_xu(t,\cdot)||_{L^\infty}$ and hence one has control of the nonlinearities. While this is helpful for our analysis, the control is lost for large obstacles, cf. \cite{Marius1}, \cite[Section 5]{Marius2}. The reason is that $||\partial_x u||_{L^\infty}$ is a quantity that disregards the nature of $\mathcal{E}$ as a \emph{geometric energy of curves}, namely
\begin{equation}
\mathcal{E}(u) = \int_{\mathrm{graph(u)}} \kappa^2 \; \mathrm{d} \mathbf{s}  . 
\end{equation}
More precisely: If $||\partial_x u||_{L^\infty}$ becomes large, $\mathrm{graph}(u)$ is not necessarily ill-behaved as a curve. If one wants to go beyond the threshold of $\frac{c_0^2}{4}$ one needs to work with curves and formulate a geometric minimizing movement scheme. While this causes additional difficulties, there has recently been progress, eg. in \cite{Blatt}, for gradient flows of the ($p$-)elastic energy without an obstacle constraint.   
\end{problem}

\begin{problem}[Symmetry breaking or not?]
In the article we have seen that symmetric evolutions with $\mathcal{E}(u_0) < \frac{c_0^2}{4}$ approach the unique symmetric critical point from Lemma \ref{lem:symcone}. (Note that we have only shown uniqueness of this critical point, but its existence follows from symmetry-preserving and subconvergence --- or alternatively from \cite{Yoshizawa2}). 

We actually want to show convergence to a global minimizer. For this we have to show that a symmetric minimizer can be found. 
We have done so in Corollary \ref{cor:minisym} --- but again only below an energy threshold of $G(2)^2$, which is even smaller than $\frac{c_0^2}{4}$. 

The value  of $G(2)^2$ corresponds to a loss of convexity of $\frac{1}{G^{-1}}$ and hence poses a limitation to the nonlinear Talenti symmetrization. We expect that there exist symmetric minimizers also above this threshold, but a proof will require further techniques. Presumably one needs to find a more geometric approach to the symmetry problem, which will be subject to our future research. 
\end{problem}

\appendix
\section{Technical Proofs in Section \ref{sec:exi}}  \label{app:A}
\begin{proof}[Proof of Lemma \ref{lem:discr}]
Note first that 
\begin{equation}
I := \inf_{u \in C_\psi} \Phi_\tau^f(u) \leq \Phi_\tau^f(f) = \E(f) < \frac{c_0^2}{4}.
\end{equation}
Therefore we can choose a minimizing sequence $(u_n)_{n = 1}^\infty \subset C_\psi$ such that $\Phi_\tau^f(u_n) \leq \E(f)$ for all $n \in \mathbb{N}$. Hence 
\begin{equation}
\E(u_n) \leq \Phi_\tau^f(u_n) \leq \E(f) \quad \forall n \in \mathbb{N} 
\end{equation}
 By Remark \ref{ref:remeng} we obtain that $||u_n'||_{L^\infty} \leq G^{-1} ( \sqrt{\E(f)} ) $ and $(u_n)_{n = 1}^\infty$ is uniformly bounded in $W^{2,2}(0,1)$.
After choosing an appropriate subsequence (which we do not relabel), we can assume that $u_n \rightharpoonup w$ in $W^{2,2}(0,1)$ for some $w \in W^{2,2}(0,1)$. Note that $w \in C_\psi$ since $C_\psi$ is weakly closed as closed convex subset of $W^{2,2}(0,1)$. By Sobolev embedding $u_n \rightarrow w$ in $C^1([0,1])$ and hence in particular in $L^2(0,1)$. Just like in the proof of \cite[Lemma 2.5]{Anna} we obtain now that 
\begin{equation}\label{eq:eee}
\E(w) \leq \liminf_{n \rightarrow \infty} \E(u_n). 
\end{equation}
Because of the $L^2$-convergence we get 
\begin{equation}\label{eq:tauu}
\frac{1}{2\tau} || w - f||_{L^2}^2 = \lim_{n \rightarrow \infty} \frac{1}{2\tau} || u_n - f||_{L^2}^2 .
\end{equation}
Summing \eqref{eq:eee} and \eqref{eq:tauu} we obtain 
\begin{equation}
\Phi_\tau^f(w) \leq \liminf_{n \rightarrow \infty} \left( \E(u_n)+ \frac{1}{2\tau} || u_n - f||_{L^2}^2 \right) = I .
\end{equation}
Since $w \in C_\psi$ is admissible we also have $\Phi_\tau^f(w) \geq I$, which implies that $w$ is a minimizer. Equation \ref{eq:verreg} now follows easily from the fact that for all $v  \in C_\psi$ one has 
\begin{equation}
0 \leq \frac{d}{dt}_{\mid_{t = 0 }} \phi_\tau^f(w+ t(v-w)) 
\end{equation}
which is due to the fact that $w + t(v-w)$ is admissible for all $t \in [0,1]$. 
\end{proof}
\begin{proof}[Proof of Lemma \ref{lem:miii}]
Remark \ref{ref:remeng}  and Remark \ref{rem:minibound} yield that for each $k , \tau $ one has $||u_{k \tau}'||_\infty \leq G^{-1} ( \sqrt{\E(u_0)}) < \infty$. Let $C_E$ be the operator norm of the inclusion operator $W^{2,2}(0,1)\cap W_0^{1,2}(0,1) \hookrightarrow W^{2,2}(0,1)$. Then
\begin{align}
    ||u_{k \tau }||_{W^{2,2}}^2 & \leq C_E || u_{k \tau }||_{W^{2,2} \cap W_0^{1,2}}^2 = C_E \int_0^1 (u_{k \tau })''^2 \dx \nonumber \\ & \leq C_E (1+ G^{-1}(\sqrt{\E(u_0)})^2)^\frac{5}{2} \mathcal{E}(u_{k\tau})   \leq C_E (1+ G^{-1}(\sqrt{\E(u_0)})^2)^\frac{5}{2} \mathcal{E}(u_0) \label{eq:416}
\end{align}
Now 
\begin{align*}
    ||u^\tau(t)||_{W^{2,2}} & \leq \frac{(k+1)\tau - t}{\tau }||u_{k \tau}||_{W^{2,2}} + \frac{t - k\tau }{\tau }|| u_{k \tau }||_{W^{2,2}}
   \\ &  \leq ||u_{k \tau }||_{W^{2,2}} + || u_{(k + 1) \tau }||_{W^{2,2}} \leq 2 \sqrt{C_E (1+ G^{-1}(\sqrt{\E(u_0)})^2)^\frac{5}{2} \mathcal{E}(u_0) }.
\end{align*}
\end{proof}
 \begin{proof}[Proof of Lemma \ref{lem:w3infbds}]
  Fix $t, \tau > 0 $. For the sake of simplicity of notation we define $u := \overline{u}^\tau(t)$. 
  First we expand \eqref{eq:VARDISCR} to find that for each $\phi \in W^{2,2}(0,1) \cap W_0^{1,2}(0,1)$  such that $\phi \geq 0 $
  \begin{equation}
  \int_0^1 \dot{u}^\tau (t) \phi \dx  + 2 \int_0^1 \frac{u'' \phi'' }{(1+ u'^2)^\frac{5}{2}} \dx - 5 \int_0^1 \frac{u''^2 u' \phi'}{(1+ u'^2)^\frac{7}{2}} \dx \geq 0  
  \end{equation}
and for each $\phi \in W^{2,2}(0,1) \cap W_0^{1,2}(0,1)$ supported on $\{ u > \psi \}$  one has 
   \begin{equation}\label{eq:DINGSI}
  \int_0^1 \dot{u}^\tau (t) \phi \dx  + 2 \int_0^1 \frac{u'' \phi'' }{(1+ u'^2)^\frac{5}{2}} \dx - 5 \int_0^1 \frac{u''^2 u' \phi'}{(1+ u'^2)^\frac{7}{2}} \dx = 0.  
  \end{equation}
By a version of the Riesz-Markow-Kakutani Theorem (see \cite[Lemma 37.2]{Tartar}), there exists a Radon measure $\mu$ on $(0,1)$ such that for each $\phi \in C_0^\infty(0,1)$
 \begin{equation}\label{eq:RMK}
  \int_0^1 \dot{u}^\tau (t) \phi \dx  + 2 \int_0^1 \frac{u'' \phi'' }{(1+ u'^2)^\frac{5}{2}} \dx - 5 \int_0^1 \frac{u''^2 u' \phi'}{(1+ u'^2)^\frac{7}{2}} \dx=  \int_{(0,1)} \phi \; \mathrm{d}\mu. 
  \end{equation}
  Equation \eqref{eq:DINGSI} implies that $\mu$ is supported on $\{ u = \psi\}$. 
  Because of the assumptions on the obstacle $\mu$ is a Radon measure with support compactly contained in $(0,1)$, hence also a finite measure. Now we want to bound $\mu((0,1))$ independently of $\tau$. As an intermediate claim we assert that there exists $\delta > 0 $ independent of $\tau$ such that $u >\psi $ on $[0, \delta] \cup [1- \delta , 1]$. For this note that there exists $\delta_1 > 0 $ such that $\psi <  \frac{1}{2}\min\{ \psi(0), \psi(1) \} := P<0 $ on $[0,\delta_1]\cup [1- \delta_1 , 1]$. By uniform boundedness of $u^\tau$ in $L^\infty((0,\infty), W^{2,2}(0,1))$ (cf. Lemma \ref{lem:miii}) there exists a universal constant $L$ independent of $\tau$ such that $||u'||_\infty < L$. This and $u(0) = u(1) =0$ imply that $u \geq P$ on $[0, \frac{|P|}{L}] \cup [ 1- \frac{|P|}{L}, 1]$. Choosing $\delta:= \min \{ \delta_1 , \frac{|P|}{L} \}$ we obtain the intermediate claim.  
   We can now plug into \eqref{eq:RMK} a function $\phi \in C_0^\infty(0,1)$ such that $\phi \equiv 1 $ on $[\delta , 1- \delta ] $, $0 \leq \phi \leq 1$ and $||\phi'||_\infty < \frac{2}{\delta}, ||\phi''||_\infty< \frac{2}{\delta^2} $. This yields 
  \begin{align}
  \quad \quad \mu(&(0,1))  = \int_0^1 \dot{u}^\tau (t) \phi \dx  + 2 \int_0^1 \frac{u'' \phi'' }{(1+ u'^2)^\frac{5}{2}} \dx - 5 \int_0^1 \frac{u''^2 u' \phi'}{(1+ u'^2)^\frac{7}{2}} \dx \\ & \leq || \dot{u}^\tau (t) ||_{L^2}  + 2||\phi''||_\infty\int_0^1 \frac{|u''|}{(1+u'^2)^\frac{5}{2}} \dx   + 5 || \phi'||_\infty \int_0^1 \frac{u''^2}{(1+ u'^2)^\frac{5}{2}} \dx
  \\ &  \leq   || \dot{u}^\tau (t) ||_{L^2}  + 2||\phi''||_\infty \left(\int_0^1 \frac{u''^2}{(1+u'^2)^\frac{5}{2}} \dx \right)^\frac{1}{2}  + 5 || \phi'||_\infty \int_0^1 \frac{u''^2}{(1+ u'^2)^\frac{5}{2}} \dx 
  \\ & \leq || \dot{u}^\tau (t) ||_{L^2}  + \frac{4 \sqrt{\E(u_0)}}{\delta^2} + \frac{10 \mathcal{E}(u_0)}{\delta}   =: ||\dot{u}^\tau(t)|| + A \label{eq:utau},
  \end{align}
 for some $A = A(\delta, u_0)$. Now observe that 
  \begin{equation}
  \int_0^1 \phi \; d\mu = \int_0^1 \int_0^x \phi'(y) \dy  \; d\mu = \int_0^1 \mu((y,1) ) \phi'(y) \dy  .
  \end{equation}
  Defining $m(t) := \mu((t,1))$ we obtain by \eqref{eq:RMK} that for each $\phi \in C_0^\infty(0,1)$  
  \begin{equation}\label{eq:430}
  \int_0^1 \dot{u}^\tau (t) \phi \dx  + 2 \int_0^1 \frac{u'' \phi'' }{(1+ u'^2)^\frac{5}{2}} - 5 \int_0^1 \frac{u''^2 u' \phi'}{(1+ u'^2)^\frac{7}{2}} \dx=  \int_0^1 m \phi' \dx. 
  \end{equation}
  Now fix $\theta \in C_0^\infty(0,1)$ such that $\int_0^1 \theta = 1$ and let $\eta \in C_0^\infty(0,1)$ be arbitrary. Observe that $\phi(x) := \int_0^x \eta (r) \dr - \int_0^x\theta(r)  \dr \int_0^1 \eta(y) \dy$ lies in $C_0^\infty(0,1)$. Plugging this in \eqref{eq:430} we infer 
  \begin{align}
  2 \int_0^1 \frac{u'' \eta'}{(1+ u'^2)^\frac{5}{2}}& = - \int_0^1 \int_0^x \dot{u}^{\tau}(t)(x) \eta(r) \dr \dx + \int_0^1 \int_0^x \dot{u}^{\tau}(t)(x) \theta(s) \ds \dx \int_0^1 \eta(r) \dr \\ & \quad  + 2 \int_0^1 \frac{u'' \theta '}{(1+ u'^2)^\frac{5}{2}} \dx \int_0^1 \eta(r) \dr + 5 \int_0^1 \frac{u''^2 u' \eta }{(1+u'^2)^\frac{7}{2}} \dx \\ & \quad   - 5 \int_0^1 \frac{u''^2 \theta}{(1+u'^2)^\frac{7}{2}} \dx \int_0^1 \eta(r) \dr + \int_0^1 m \eta \dx - \int_0^1 m \theta \dx  \int_0^1 \eta(r) \dr   
  \\ & = \int_0^1 \eta(r)  \left[ \int_r^1 \dot{u}^{\tau}(t) \dx + \int_0^1 \dot{u}^{\tau}(t) \int_0^x \theta(s) \ds \dx - \int_0^1 \frac{u'' \theta '}{(1+ u'^2)^\frac{5}{2}} \dx \right. \\ & \left. \quad +  5 \frac{u''^2(r) u'(r)}{(1+ u'(r)^2)^\frac{7}{2}}  - 5 \int_0^1 \frac{u''^2 \theta}{(1+ u'^2)^\frac{7}{2}} \dx + m(r) - \int_0^1 m \theta \dx \right]\dr .
  \end{align}
Since $\eta \in C_0^\infty(0,1)$ was arbitrary, we infer that $\frac{u''}{(1+ u'^2)^\frac{5}{2}} \in W_{loc}^{1,1}(0,1)$ and 
\begin{align}\label{eq:dingskirchen}
\left( \frac{u''}{(1+ u'^2)^\frac{5}{2}} \right)'& = \left[ \int_r^1 \dot{u}^{\tau}(t) \dx + \int_0^1 \dot{u}^{\tau}(t) \int_0^x \theta(s) \ds \dx - \int_0^1 \frac{u'' \theta '}{(1+ u'^2)^\frac{5}{2}} \dx \right. \nonumber \\ & \left. \quad +  5 \frac{u''^2(r) u'(r)}{(1+ u'(r)^2)^\frac{7}{2}}  - 5 \int_0^1 \frac{u''^2 \theta}{(1+ u'^2)^\frac{7}{2}} \dx + m(r) - \int_0^1 m \theta \dx \right].
\end{align}
Note that the right hand side of the previous equation lies in $L^1$ and hence $\left( \frac{u''}{(1+ u'^2)^\frac{5}{2}} \right) \in W^{1,1}$. 
Using similar estimates as above \eqref{eq:dingskirchen} implies
\begin{equation}
\left\Vert \left( \frac{u''}{(1+ u'^2)^\frac{5}{2}} \right)' \right\Vert_{L^1} \leq C_1 ||\dot{u}^{\tau}(t)||_{L^2} + C_2 ( 1+ \mathcal{E}(u_0) ) + C_3 ||m||_\infty
\end{equation}
for some $C_1, C_2 ,C_3 > 0$ that can be chosen independently of $\tau, u_0$. Using \eqref{eq:utau} we find that there exist $\widetilde{C}_1$ and $\widetilde{C}_2 > 0 $ such that  
\begin{equation}
\left\Vert \left( \frac{u''}{(1+ u'^2)^\frac{5}{2}} \right)' \right\Vert_{L^1} \leq \widetilde{C}_1 ||\dot{u}||_{L^2} + \widetilde{C}_2 ( 1+ \mathcal{E}(u_0)).
\end{equation} 
From this  and the fact that $||u'||_{L^\infty(0,1)}$ is bounded independently of $\tau$ we infer that $u'' \in L^\infty(0,1)$ and 
\begin{align*}
||u''||_{L^\infty} & \leq (1 + ||u'||^2_\infty)^\frac{5}{2} \left\Vert \frac{u''}{(1+(u')^2)^\frac{5}{2}} \right\Vert_\infty 
 \leq (1 + ||u'||^2_\infty)^\frac{5}{2} \left\Vert \frac{u''}{(1+(u')^2)^\frac{5}{2}} \right\Vert_{	W^{1,1}}
\\ & \leq   (1 + ||u'||^2_\infty)^\frac{5}{2} \left(  \int_0^1 \frac{|u''|}{(1+ u'^2)^\frac{5}{2}} \dx  + \widetilde{C}_1 ||\dot{u}^{\tau}(t)||_{L^2} + \widetilde{C}_2 ( 1 + \mathcal{E}(u_0)) \right)\\ &\leq C_4 || \dot{u}^{\tau}(t) ||_{L^2}  + C_5 
\end{align*}
for some $C_4,C_5 > 0$ independent of $\tau$. With this additional information we can go back to \eqref{eq:dingskirchen} and prove that 
\begin{equation}
||u'''||_{L^\infty} \leq C_6 ||\dot{u}^\tau(t) ||_{L^2} + C_7, 
\end{equation}
whereupon the claimed estimate follows. The proof that $u''(0) = u''(1) = 0$ is very similar to \cite[Corollary 3.3]{Anna}. 
  \end{proof}

\section{Completion of the Proof of Lemma \ref{lem:symcone}}\label{app:B}
 It remains to show the strict monotonicity of $H(A)$, which is defined as in  \eqref{eq:H}. We show that $H$ is differentiable and $H' > 0$. Since we work with hypergeometric functions, we need some preliminary notation.
 \begin{definition}[{Hypergeometric Function, see \cite[Definition 2.1.5]{Askey}}]
 Let $a,b,c,z \in \mathbb{C}$. We define for $n \in \mathbb{N}$ 
 \begin{equation}
 (a)_n := \frac{\Gamma(a+n)}{\Gamma(a)} = a \cdot (a+1) \cdot ... \cdot (a+n-1) ,
 \end{equation}
 where $\Gamma$ denotes Euler's Gamma Function. We define $\HF(a,b,c, \cdot)$ to be the unique analytic continuation of 
 \begin{equation}
 B_1(0) \ni z \mapsto \sum_{n = 1}^\infty \frac{(a)_n (b)_n}{(c)_n n!} z^n \in \mathbb{C}
 \end{equation}
 \end{definition}
 We also recall the famous Pfaff Transformation (cf. \cite[Theorem 2.2.5]{Askey})
 \begin{equation}
 \HF (a,b,c,z) = \frac{1}{(1-z)^a} \HF\left(a, c-b, c , \frac{z}{z-1}\right) .
 \end{equation}

  Note that by \cite[Lemma C5]{Marius1} and the Pfaff Transformation 
 \begin{align}
 H(A) & = \frac{1}{2}  \frac{\int_{0}^{A} \frac{z}{\sqrt{A - z} {(1+z^2)^\frac{5}{4}}} \dz }{\int_{0}^{A} \frac{1}{\sqrt{A - z} {(1+z^2)^\frac{5}{4}}} \dz} = \frac{1}{3} A \frac{\HF(1, \frac{3}{2} , \frac{7}{4}, -A^2) }{\HF ( 1, \frac{1}{2}, \frac{3}{4}, -A^2) }
 \\ & = \frac{1}{3} A \frac{\HF \left(1, \frac{1}{4} , \frac{7}{4}, \frac{A^2}{1+A^2} \right) }{\HF \left( 1, \frac{1}{4}, \frac{3}{4}, \frac{A^2}{1+ A^2} \right) } = \frac{1}{3} A \frac{\sum_{k = 0}^\infty  \frac{(\nicefrac{1}{4})_k}{(\nicefrac{7}{4})_k}  \left( \frac{A^2}{1+ A^2} \right)^k}{\sum_{k = 0}^\infty  \frac{(\nicefrac{1}{4})_k}{(\nicefrac{3}{4})_k}  \left( \frac{A^2}{1+ A^2} \right)^k}
 \end{align}
 where the last step is justified as $ \frac{A^2}{1+ A^2} \in [0,1)$ for each $A \in \mathbb{R}$. 
 For the computation to come we introduce the following notation. We write $x := \frac{A^2}{1+ A^2} \in [0,1)$ and set $D(x):= \sum_{k = 0}^\infty  \frac{(\nicefrac{1}{4})_k}{(\nicefrac{3}{4})_k}  x^k $ as well as $r(x) := \HF \left(1, \frac{1}{4} , \frac{7}{4}, x \right)$. We will also use the hypergeometric equation (cf. \cite[Equation (2.3.5)]{Askey})  for $r$, which reads
 \begin{equation}
 x(1-x)r''(x) + \left( \frac{7}{4} - \frac{9}{4}x \right) r'(x) - \frac{1}{4} r(x) = 0.
 \end{equation}
 We compute the derivative and perform some rearrangements 
 \begin{align*}
 H'(A)&  =  \frac{1}{3}  \frac{\sum_{k = 0}^\infty  \frac{(\nicefrac{1}{4})_k}{(\nicefrac{7}{4})_k}  \left( \frac{A^2}{1+ A^2} \right)^k}{\sum_{k = 0}^\infty  \frac{(\nicefrac{1}{4})_k}{(\nicefrac{3}{4})_k}  \left( \frac{A^2}{1+ A^2} \right)^k}+ \frac{1}{3} A  \frac{\sum_{k = 0}^\infty k  \frac{(\nicefrac{1}{4})_k}{(\nicefrac{7}{4})_k}  \left( \frac{A^2}{1+ A^2} \right)^{k-1} }{\sum_{k = 0}^\infty  \frac{(\nicefrac{1}{4})_k}{(\nicefrac{3}{4})_k}  \left( \frac{A^2}{1+ A^2} \right)^k} \frac{2A}{(1+A^2)^2}
 \\ & - \frac{1}{3} A   \frac{\sum_{k = 0}^\infty  \frac{(\nicefrac{1}{4})_k}{(\nicefrac{7}{4})_k}  \left( \frac{A^2}{1+ A^2} \right)^k}{ \left( \sum_{k = 0}^\infty  \frac{(\nicefrac{1}{4})_k}{(\nicefrac{3}{4})_k}  \left( \frac{A^2}{1+ A^2} \right)^k\right) ^2} \sum_{k = 0}^\infty k \frac{(\nicefrac{1}{4})_k}{(\nicefrac{3}{4})_k}  \left( \frac{A^2}{1+ A^2} \right)^{k-1} \frac{2A}{(1+A^2)^2}
 \\ & = \frac{1}{3D(x)^2} \left[  \sum_{k = 0}^\infty  \frac{(\nicefrac{1}{4})_k}{(\nicefrac{7}{4})_k}  \left( \frac{A^2}{1+ A^2} \right)^k \sum_{k = 0}^\infty  \frac{(\nicefrac{1}{4})_k}{(\nicefrac{3}{4})_k}  \left( \frac{A^2}{1+ A^2} \right)^k \right. \\ & \left.\qquad \qquad \qquad + \frac{2}{1+A^2}\sum_{k = 0}^\infty k \frac{(\nicefrac{1}{4})_k}{(\nicefrac{7}{4})_k}  \left( \frac{A^2}{1+ A^2} \right)^k \sum_{k = 0}^\infty  \frac{(\nicefrac{1}{4})_k}{(\nicefrac{3}{4})_k}  \left( \frac{A^2}{1+ A^2} \right)^k \right. \\ & \left.\qquad \qquad \qquad - \frac{2}{1+ A^2}\sum_{k = 0}^\infty  \frac{(\nicefrac{1}{4})_k}{(\nicefrac{7}{4})_k}  \left( \frac{A^2}{1+ A^2} \right)^k \sum_{k = 0}^\infty k \frac{(\nicefrac{1}{4})_k}{(\nicefrac{3}{4})_k}  \left( \frac{A^2}{1+ A^2} \right)^k \right]
 \\ & = \frac{1}{3D(x)^2} \left[  \sum_{k = 0}^\infty  \frac{(\nicefrac{1}{4})_k}{(\nicefrac{7}{4})_k}  x^k \sum_{k = 0}^\infty  \frac{(\nicefrac{1}{4})_k}{(\nicefrac{3}{4})_k}  x^k + 2(1-x) \sum_{k = 0}^\infty k \frac{(\nicefrac{1}{4})_k}{(\nicefrac{7}{4})_k} x^k \sum_{k = 0}^\infty  \frac{(\nicefrac{1}{4})_k}{(\nicefrac{3}{4})_k}  x^k \right. \\ & \left. \qquad \qquad \qquad  - 2(1-x) \sum_{k = 0}^\infty  \frac{(\nicefrac{1}{4})_k}{(\nicefrac{7}{4})_k}  x^k \sum_{k = 0}^\infty k \frac{(\nicefrac{1}{4})_k}{(\nicefrac{3}{4})_k}  x^k \right]
 \\ & = \frac{1}{3D(x)^2} \left[  \sum_{k = 0}^\infty  \frac{(\nicefrac{1}{4})_k}{(\nicefrac{7}{4})_k}  x^k \sum_{k = 0}^\infty  \frac{(\nicefrac{1}{4})_k}{(\nicefrac{3}{4})_k}  x^k  \right.
  \\&  \left. \qquad \qquad \qquad  +2 (1-x) \sum_{k = 0}^{\infty} x^k\sum_{l = 0}^k l \left(  \frac{(\nicefrac{1}{4})_l (\nicefrac{1}{4})_{k-l}}{(\nicefrac{7}{4})_l(\nicefrac{3}{4})_{k-l}} - \frac{(\nicefrac{1}{4})_l(\nicefrac{1}{4})_{k-l}}{(\nicefrac{3}{4})_l(\nicefrac{7}{4})_{k-l}}\right)  \right] 
  \\ & = \frac{1}{3D(x)^2} \left[  \sum_{k = 0}^\infty  \frac{(\nicefrac{1}{4})_k}{(\nicefrac{7}{4})_k}  x^k \sum_{k = 0}^\infty  \frac{(\nicefrac{1}{4})_k}{(\nicefrac{3}{4})_k}  x^k  \right.
  \\&  \left.   +2 (1-x) \sum_{k = 0}^{\infty} x^k\sum_{l = 0}^k l (\nicefrac{1}{4})_l (\nicefrac{1}{4})_{k-l} \left(  \frac{\frac{3}{4}+ k -l}{ \frac{3}{4}(\nicefrac{7}{4})_l(\nicefrac{7}{4})_{k-l}} - \frac{\frac{3}{4}+ l}{\frac{3}{4}(\nicefrac{7}{4})_l(\nicefrac{7}{4})_{k-l}}\right)  \right]
  \\  & = \frac{1}{3D(x)^2} \left[  \sum_{k = 0}^\infty  \frac{(\nicefrac{1}{4})_k}{(\nicefrac{7}{4})_k}  x^k \sum_{k = 0}^\infty  \frac{(\nicefrac{1}{4})_k}{(\nicefrac{3}{4})_k}  x^k  \right.
  \\&  \left. \qquad \qquad \qquad   +\frac{8}{3} (1-x) \sum_{k = 0}^{\infty} x^k\sum_{l = 0}^k l(k-2l)  \frac{(\nicefrac{1}{4})_l (\nicefrac{1}{4})_{k-l}}{(\nicefrac{1}{4})_l (\nicefrac{1}{4})_{k-l}}  \right]
  \\ & = \frac{1}{3D(x)^2} \left[  \sum_{k = 0}^\infty  \frac{(\nicefrac{1}{4})_k}{(\nicefrac{7}{4})_k}  x^k \sum_{k = 0}^\infty  \frac{(\nicefrac{1}{4})_k}{(\nicefrac{7}{4})_k} \frac{\frac{3}{4}+ k}{\frac{3}{4}}  x^k  \right.
  \\&  \left. \qquad \qquad \qquad   +\frac{8}{3} (1-x) \sum_{k = 0}^{\infty} x^k\sum_{l = 0}^k l(k-2l)  \frac{(\nicefrac{1}{4})_l (\nicefrac{1}{4})_{k-l}}{(\nicefrac{7}{4})_l (\nicefrac{7}{4})_{k-l}}  \right]
   \\ & = \frac{1}{3D(x)^2} \left[ \left( \sum_{k = 0}^\infty  \frac{(\nicefrac{1}{4})_k}{(\nicefrac{7}{4})_k}  x^k \right)^2 + \frac{4}{3}\sum_{k = 0}^\infty  \frac{(\nicefrac{1}{4})_k}{(\nicefrac{7}{4})_k} k  x^k \sum_{k = 0}^\infty  \frac{(\nicefrac{1}{4})_k}{(\nicefrac{7}{4})_k}   x^k  \right.
  \\&  \left.  +\frac{8}{3} (1-x) \sum_{k = 0}^{\infty} x^k\sum_{l = 0}^k l(k-l)  \frac{(\nicefrac{1}{4})_l (\nicefrac{1}{4})_{k-l}}{(\nicefrac{7}{4})_l (\nicefrac{7}{4})_{k-l}}-\frac{8}{3} (1-x) \sum_{k = 0}^{\infty} x^k\sum_{l = 0}^k l^2  \frac{(\nicefrac{1}{4})_l (\nicefrac{1}{4})_{k-l}}{(\nicefrac{7}{4})_l (\nicefrac{7}{4})_{k-l}}   \right]  
  \\ &  = \frac{1}{3D(x)^2} \left[ \left( \sum_{k = 0}^\infty  \frac{(\nicefrac{1}{4})_k}{(\nicefrac{7}{4})_k}  x^k \right)^2 + \frac{4}{3}\sum_{k = 0}^\infty  \frac{(\nicefrac{1}{4})_k}{(\nicefrac{7}{4})_k} k  x^k \sum_{k = 0}^\infty  \frac{(\nicefrac{1}{4})_k}{(\nicefrac{7}{4})_k}   x^k  \right.
  \\&  \left.  +\frac{8}{3} (1-x) \left( \sum_{k = 0}^{\infty} k\frac{(\nicefrac{1}{4})_k}{(\nicefrac{7}{4})_k} x^k \right)^2-\frac{8}{3} (1-x) \sum_{k = 0}^{\infty} k^2\frac{(\nicefrac{1}{4})_k}{(\nicefrac{7}{4})_k}x^k  \sum_{k = 0}^{\infty} \frac{(\nicefrac{1}{4})_k}{(\nicefrac{7}{4})_k}x^k   \right]  
  \\ &  = \frac{1}{3D(x)^2} \left[ \left( \sum_{k = 0}^\infty  \frac{(\nicefrac{1}{4})_k}{(\nicefrac{7}{4})_k}  x^k \right)^2 + \frac{4}{3}\sum_{k = 0}^\infty  \frac{(\nicefrac{1}{4})_k}{(\nicefrac{7}{4})_k} k  x^k \sum_{k = 0}^\infty  \frac{(\nicefrac{1}{4})_k}{(\nicefrac{7}{4})_k}   x^k  \right.
  \\&  \left.  +\frac{8}{3} (1-x) \left( \sum_{k = 0}^{\infty} k\frac{(\nicefrac{1}{4})_k}{(\nicefrac{7}{4})_k} x^k \right)^2-\frac{8}{3} (1-x) \sum_{k = 0}^{\infty} [k(k-1) + k]\frac{(\nicefrac{1}{4})_k}{(\nicefrac{7}{4})_k}x^k  \sum_{k = 0}^{\infty} \frac{(\nicefrac{1}{4})_k}{(\nicefrac{7}{4})_k}x^k   \right]  
  \\ & = \frac{1}{3D(x)^2} \left( r(x)^2 + \frac{4}{3}x r'(x) r(x) + \frac{8}{3}(1-x)x^2 r'(x)^2  \right. \\ & \left. \qquad \qquad \qquad -\frac{8}{3}(1-x) x^2 r''(x) r(x) - \frac{8}{3}(1-x) xr'(x) r(x) \right) 
  \\ &= \frac{1}{3D(x)^2} \left( r(x)^2 + \frac{4}{3}x r'(x) r(x) + \frac{8}{3}(1-x)x^2 r'(x)^2 + \frac{8}{3}x \left( \frac{7}{4}- \frac{9}{4}x \right) r'(x) r(x) \right. \\ & \left. \qquad \qquad \qquad - \frac{2}{3}x r(x)^2 - \frac{8}{3}(1-x) xr'(x) r(x) \right)
   \\ &=  \frac{1}{3D(x)^2} \left( r(x)^2 - \frac{2}{3}x r(x)^2 + \frac{8}{3}(1-x)x^2 r'(x)^2 + \frac{10}{3}x r(x) r'(x) - \frac{10}{3}x^2 r(x) r'(x)  \right).
 \end{align*}
As $r(x), r'(x)$ are power series with only positive coefficients, they are themselves positive on $(0,1)$. As $x \in (0,1)$ we can estimate $x^2 \leq x$ and $x\leq 1$ to obtain  
\begin{equation}
H'(A) \geq \frac{1}{3D(x)^2} \left( \frac{1}{3}r(x)^2 + \frac{8}{3} (1-x) x^2r'(x)^2 \right) > 0 . 
\end{equation}
The claim follows. 

\section{Proof of Lemma \ref{lem:75}}\label{ref:appc}
\begin{proof}
Let $f$ be as in the statement. First note that 
\begin{equation}
f'(\nicefrac{1}{2}) = J(f(\nicefrac{1}{2}) ) = J(x_0) = 0.
\end{equation}
As $f$ is strictly concave, $f'$ is strictly decreasing and hence $f' > 0 $ on $(0, \frac{1}{2})$. This implies in particular that $0 <f(r) < x_0$ for all $r \in (0, \frac{1}{2})$. In particular $J(f(r)) > 0 $ for all $r \in (0,\frac{1}{2})$. Hence we may write 
\begin{equation}
\frac{f'(r)}{J(f(r))} = 1 \quad \forall r \in (0, \frac{1}{2}) .
\end{equation}
Now fix $s \in (0,\frac{1}{2})$ and choose $r_n \rightarrow \frac{1}{2}$ a monotone sequence such that $r_n < \frac{1}{2}$ for all $n \in \mathbb{N}$. Integrate from $s$ to $r_n$ to find 
\begin{equation}
\int_s^{r_n} \frac{f'(r)}{J(f(r))} \dr = r_n - s. 
\end{equation}
As $J$ is locally Lipschitz on $(0,x_0)$ we can use the substitution rule to get 
\begin{equation}
\int_{f(s)}^{f(r_n)}  \frac{1}{J(z)} \dz = r_n - s.
\end{equation}
Note that $\frac{1}{J} >0 $ in the domain of integration. As $f' >0 $ on $(0, \frac{1}{2})$ we find that $f$ is monotone and hence $f(r_n)$ converges monotonically to $f(\frac{1}{2}) = x_0$. We can apply the monotone convergence theorem to pass to the limit and find 
\begin{equation}
\int_{f(s)}^{x_0} \frac{1}{J(z)} \dz = \frac{1}{2} - s \quad  \forall s \in (0, \nicefrac{1}{2}).
\end{equation}
As the integrand is positive on $(0,x_0)$, the integral is strictly monotone in its lower argument and as a result $f(s)$ is uniquely determined for each $s \in (0,\frac{1}{2})$. As we have required that $f \in C^2( [0, \frac{1}{2} ] )$, $f$ is also uniquely determined at the boundary points.
\end{proof}

\end{document}